\documentclass{article}
\usepackage{graphicx,color}
\usepackage{stmaryrd,amsmath,amsfonts,amssymb,amsthm,mathrsfs,amsopn}
\usepackage{latexsym}
\usepackage[numbers]{natbib}
\usepackage{hyperref}
\usepackage[usenames,dvipsnames,x11names,table]{xcolor}

\numberwithin{equation}{section}
\usepackage[utf8]{inputenc}
\usepackage{tikz}
\usepackage{esint}

\def\H{\mathcal H}

\def\R{\mathbb R}
\def\N{\mathbb N}
\def\Z{\mathbb Z}
\newcommand{\dist}{\mathop{\mathrm{dist}}}

\def\00{{\bf 0}}

\def\Lip{{\rm Lip}\,}

\newcommand{\hel} {
\hskip2.5pt{\vrule height7pt width.5pt depth0pt}
\hskip-.2pt\vbox{\hrule height.5pt width7pt depth0pt}
\, }
\newcommand{\restr}{\hel}
\newcommand{\supp}{\mathrm{spt}}
\theoremstyle{plain}
\newtheorem{theorem}{Theorem} [section]

\newtheorem{lemma}[theorem]{Lemma}

\theoremstyle{definition}

\theoremstyle{plain}
\newtheorem{prop}[theorem]{Proposition}
\theoremstyle{definition}
\newtheorem{defin}[theorem]{Definition}
\theoremstyle{plain}
\newtheorem{cor}[theorem]{Corollary}
\renewcommand{\epsilon}{\varepsilon}
\renewcommand{\phi}{\varphi}
\renewcommand{\bar}{\overline}
\title{Lectures on Existence and Regularity of Anisotropic Minimal Surfaces}
\author{Antonio De Rosa}
\date{}
\begin{document}
\maketitle
\begin{center}
Lecture notes for the mini-course given by the
author at the Winter School\\
in Geometric Measure Theory at Westlake University in January 2025.
\end{center}
\begin{abstract}
For several natural phenomena, the use of the surface area functional is a first approximation. In order to capture microstructures, numerous models in applied sciences employ directionally dependent functionals, known as anisotropic energies. Since anisotropic energies are not invariant under rigid motions, their critical points do not enjoy the same conservation laws as isotropic minimal surfaces. For instance, the monotonicity formula for the density ratio is not known to hold for minimizers of general anisotropic energies. Consequently, the study of anisotropic minimal surfaces is more challenging than the study of their isotropic counterparts. In this mini-course, we give an overview of the state of the art in the existence and regularity theory of anisotropic minimal surfaces. In particular, we first focus on solutions of the anisotropic Plateau problem and subsequently move to the investigation of the rectifiability and regularity theory for critical points of anisotropic energies. To conclude, we provide applications to the min-max theory for the construction of closed optimally regular anisotropic minimal hypersurfaces in closed Riemannian manifolds.

\medskip\noindent
{\bf Keywords:}
Anisotropic minimal surfaces, Existence and regularity theory.
\end{abstract}
\section{Introduction}
Recall that if $\Sigma \subset \mathbb{R}^n$ is a $d$-dimensional (isotropic) minimal surface, then the quantity
\[
\frac{\mathcal{H}^d(\Sigma\cap B(x,r))}{r^d}
\]
is non-decreasing as a function of $r>0$, see for instance \cite{SimonBook}. This fact is known as the \emph{monotonicity formula for area}. The monotonicity formula is a fundamental tool in the local analysis of minimal surfaces, implying for example that every blow-up of a minimal surface is a tangent cone and that the density
$$\Theta^d(x,\Sigma) = \lim_{r\to 0} \frac{\mathcal{H}^d(\Sigma\cap B(x,r))}{\omega_dr^{d}}$$
is a well-defined and upper semicontinuous function in $x$, where $\omega_d=\mathcal{H}^d(B^d(0,1))$ is the volume of the unit $d$-ball.
Many key regularity results for minimal surfaces, like Allard regularity theory \cite{All3}, rely on such monotonicity.
As we will shortly see, this fundamental tool fails for anisotropic minimal surfaces \cite{All1}. 

Let us first introduce anisotropic minimal surfaces. Let $G(n,d)$ denote the Grassmannian of $d$-dimensional linear subspaces of $\mathbb{R}^n$. Fix a function
\[ F: \mathbb{R}^n \times G(n,d) \to (0,\infty) \]
of class $C^1$. We define the anisotropic energy of a $d$-dimensional surface $\Sigma \subset \mathbb{R}^n$ by
\[ \mathbf{F}(\Sigma) := \int_{\Sigma} F(x, T_x \Sigma)\, d\mathcal{H}^d(x), \]
where $T_x \Sigma$ is the approximate tangent $d$-plane at $x$. In what follows, we will simplify notation by assuming $F$ does not depend on $x$, which can be done without loss of generality for many local considerations. In particular, by the compactness of the Grassmannian and the continuity and positivity of $F$, there exist $\lambda, \Lambda>0$ such that
\begin{equation}\label{ell}
0< \lambda \le F \le \Lambda.
\end{equation}
In the codimension-one case $d=n-1$, it is convenient to identify $F$ with an even function defined on the unit sphere $\mathbb S^{n-1}$.
A surface $\Sigma$ is called an \emph{anisotropic minimal surface} with respect to $F$ if for every one-parameter family of diffeomorphisms $\{\varphi_t\}_{t\in(-\epsilon,\epsilon)}$ of the ambient space with $\varphi_0 = \mathrm{Id}$ and $(\varphi_t-\mathrm{Id})$ compactly supported, one has
\[ \left.\frac{d}{dt}\right|_{t=0} \mathbf{F}(\varphi_t(\Sigma)) = 0. \]
In other words, $\Sigma$ is a critical point of the functional $\mathbf{F}$ under compactly supported variations.

The motivation to consider anisotropic surface energies comes from materials science, where surface tension often depends on the orientation of the interface between different materials. Such energies cannot be modeled by the standard isotropic area functional.
A basic difficulty is that for a general $F$ there is no known monotonicity formula for density ratios of anisotropic minimal surfaces. In fact, Allard proved that in general one can expect a monotonicity formula to hold only for anisotropic integrands that are linear transformations of the area integrand  \cite{All1}. We illustrate this below, by a simple calculation for energy minimizers.
If one replaces a portion of a surface by a cone competitor in a ball $B(x,r)$ as in the picture below, one can compare the surface energy of the original surface to that of the cone.

\begin{center}
\begin{tikzpicture}[scale=2]
\def\Rr{1}
\coordinate (C1) at (0,0);
\coordinate (C2) at (4,0);
\draw[thick] (C1) circle (\Rr);
\draw[blue,thick]
  (-1.50,0.60)
    .. controls (-1.20,0.90) and (-1.05,0.50) .. (-1,0)     
    .. controls (-0.95,-0.40) and (-0.75,0.55) .. (-0.45,0.20)
    .. controls (-0.20,0.00) and ( 0.25,0.55) .. ( 0.20,0.20)
    .. controls ( 0.20,-0.20) and ( 0.10,-0.60) .. ( 0,-1)  
    .. controls (-0.05,-1.20) and ( 0.80,-0.90) .. ( 1.30,-0.60);
\draw[thick,->] (1.30,0.95) .. controls (2.00,1.25) .. (2.70,0.95);
\draw[thick] (C2) circle (\Rr);
\begin{scope}[shift={(C2)}]
  \draw[blue,thick]
    (-1.50,0.60)
      .. controls (-1.20,0.90) and (-1.05,0.50) .. (-1,0)
      .. controls (-0.95,-0.40) and (-0.75,0.55) .. (-0.45,0.20)
      .. controls (-0.20,0.00) and ( 0.25,0.55) .. ( 0.20,0.20)
      .. controls ( 0.20,-0.20) and ( 0.10,-0.60) .. ( 0,-1)
      .. controls (-0.05,-1.20) and ( 0.80,-0.90) .. ( 1.30,-0.60);
\end{scope}
\fill[white] (C2) circle (\Rr);
\draw[thick]  (C2) circle (\Rr);
\coordinate (P) at (3,0);      
\coordinate (Q) at (4,-1);     
\draw[red,thick] (C2) -- (P);
\draw[red,thick] (C2) -- (Q);
\end{tikzpicture}
\end{center}

For the isotropic integrand $F\equiv 1$, this argument yields for a minimizer $K$, and at a.e. $r>0$
\[
f(r):=\mathcal{H}^{d}(K\cap B(x,r)) \leq \mathcal{H}^{d}(\text{Cone} \cap B(x,r)) = \frac r{d} \mathcal{H}^{d-1}( K \cap \partial B(x,r))\leq \frac r{d} f'(r), \]
from which one deduces
\[ \frac{d}{dr}\Big[ \log f(r) - d \log r \Big] \ge 0, \]
which in turn implies, thanks to the monotonicity of the exponential, that
$$\frac{d}{dr} \left (\frac{f(r)}{r^d} \right)  \ge 0.$$
 This is the monotonicity formula for area. We refer the reader to \cite{DLGM,DPDRG1} for more details.

 However, for a general anisotropic integrand $F$, the cone competitor argument produces extra factors. Indeed, by \eqref{ell}, one finds
\begin{multline*}
 f(r) = \mathcal{H}^d(K \cap B(x,r)) \le \frac{1}{\lambda} \mathbf{F}(K \cap B(x,r))
\le \frac{1}{\lambda} \mathbf{F}(\text{Cone} \cap B(x,r)) \le \frac{\Lambda}{\lambda} \mathcal{H}^d(\text{Cone}\cap B(x,r)) = \frac{\Lambda}{\lambda} \frac r{d} f'(r).
\end{multline*}
This leads to a weaker inequality
\[ \frac{d}{dr} \left ( \frac{f(r)}{r^{d\,\lambda/\Lambda}} \right) \ge 0, \]
which is not as useful, since the denominator does not have the correct dimensional homogeneity \cite{DLDRG,DPDRG3}. 

In general, no simple monotonicity formula is known for anisotropic minimizers~\cite{All1}. Hence, one needs to develop new tools that can be used in place of monotonicity. We refer to \cite{DR1} for a survey on anisotropic minimal surfaces.

For instance, let us consider the \emph{anisotropic Plateau problem} in codimension one, in the class of finite perimeter sets (or rectifiable currents). More precisely, suppose $d=n-1$ and identify $F$ with a \emph{uniformly convex norm} on $\mathbb{R}^n$, that is $F$ depends on the unit normal of the hypersurfaces. Fix an open set $\Omega \subset \mathbb{R}^n$ and a set of finite perimeter $G$. Among all finite-perimeter sets $E$ satisfying the ``boundary condition'' $E \setminus \Omega = G \setminus \Omega$, one can attempt to minimize the anisotropic surface energy $\mathbf{F}(\partial^* E)$ (here $\partial^* E$ denotes the reduced boundary of $E$). By the direct method in the calculus of variations, it turns out that there exists a minimizer $E$ in this class, and its boundary $\Sigma = \partial^* E$ is an $F$-minimal surface solving the anisotropic Plateau problem. Moreover, a regularity theorem due to Almgren, Schoen, and Simon \cite{SSA} proves the \emph{almost optimal regularity} $\mathcal{H}^{n-3}(\mathrm{Sing}(\Sigma))=0$ for $\Sigma$. The lack of a monotonicity formula is bypassed by means of competition arguments and by relying on the stability inequality.

 The condition $\mathcal H^{n-3}(\mathrm{Sing}(\Sigma))=0$ is almost sharp: Morgan constructed a counterexample, that is, an anisotropic energy-minimizing hypersurface in $\mathbb{R}^4$ with a singular point~\cite{Morgan1}. Morgan's example is the cone over the Clifford torus $\mathbb{S}^1\times \mathbb{S}^1 \subset \mathbb{R}^4$, which has one singular point at the vertex of the cone, and which he proves to be a calibrated anisotropic minimal surface with respect to anisotropic integrands that are smooth and uniformly elliptic perturbations of the $L^\infty$-norm. The Work-Raccoon Theorem of White \cite[Section 5]{W2} guarantees the existence of $\varepsilon(F)>0$ such that $\mathcal H^{n-3-\varepsilon}(\mathrm{Sing}(\Sigma))=0$. However it is still an open question whether there are anisotropic energy minimizers with a singular set of fractal dimension in the interval $(n-4,n-3-\varepsilon)$, as the lack of a monotonicity formula does not allow one to stratify the singular set.
 
The regularity theory for the anisotropic Plateau problem in general codimension is much less understood, and the best regularity result available provides zero $\mathcal{H}^d$-measure of the singular set of $d$-dimensional minimizers in $\mathbb R^n$ \cite{Alm3}.
In summary, due to the absence of a monotonicity formula, many standard techniques for minimal surfaces do not carry over to anisotropic minimal surfaces, hence the need to develop new tools to build a robust anisotropic minimal surface theory.
\vspace{0.3cm}

{\bf Acknowledgment.} The author Antonio De Rosa was funded by the European Union: the European Research Council (ERC), through StG ``ANGEVA'', project number: 101076411. Views and opinions expressed are however those of the author only and do not necessarily reflect those of the European Union or the European Research Council. Neither the European Union nor the granting authority can be held responsible for them.
These lecture notes are based on the mini-course the author gave at the Winter School in Geometric Measure Theory at Westlake University in January 2025. The author thanks Ioann Vasilyev for his suggestions. The author also thanks his postdocs Daniele De Gennaro, Aria Halavati and Ling Wang for proofreading the final version of these lecture notes.
\section{Notation and preliminaries}
\subsection{Varifolds}
We recall that a set $K\subset \mathbb{R}^n$ is said to be countably $\mathcal{H}^d$-rectifiable if there exists a countable collection $\{f_i\}_{i\in \mathbb N}$ of continuously differentiable maps
$$f_i:\mathbb{R}^d \to \mathbb{R}^n$$
such that
$$\mathcal{H}^d\left (K\setminus \bigcup_{i\in \mathbb N} f_i\left(\mathbb{R}^d\right)\right )=0.$$
An important property of rectifiable sets is that there is a notion of approximate tangent space $T_x K$, which is defined for $\mathcal{H}^d$-almost every $x\in K$, see \cite{SimonBook}. This allows one to extend to $K$ the definition of $\mathbf{F}(K)$.
We now recall some definitions and fundamental results from the theory of varifolds (generalized surfaces), specialized to the anisotropic setting. A \emph{$d$-varifold} in $\mathbb{R}^n$ is a positive Radon measure $V$ on $\mathbb{R}^n \times G(n,d)$, intuitively representing a ``surface'' together with distributional information about its tangent planes. An important class is that of rectifiable varifolds.
A $d$-varifold $V$ is rectifiable if
\[ dV(x,S) = \theta(x)\, d(\mathcal{H}^d \restr K)(x)  d\delta_{T_x K}(S), \]
which we will often express in the more compact form
\[ V = \theta\, \mathcal{H}^d \restr K \;\otimes\; \delta_{T_x K}, \]
where $K$ is a countably $\mathcal{H}^d$-rectifiable set in $\mathbb{R}^n$ and the multiplicity $\theta(x)$ is a locally $\mathcal{H}^d$-integrable nonnegative function on $K$. That is, $V$ assigns to $\mathcal{H}^d$-almost every $x\in K$ the weight $\theta(x)$ and the approximate tangent plane $T_x K$.  If $\theta\equiv 1$, we will often write, by abuse of notation, $V=K$.
We denote by $\|V\|=\pi_\# V$ the weight measure of $V$ in $\mathbb{R}^n$, where \(\pi: (x,T)\in \mathbb{R}^n\times G(n,d)\to x\in \mathbb{R}^n\) and define the lower and upper density respectively as
$$
\Theta_*^d(x,V):= \liminf_{r\to 0^+} \frac{\|V\|(B(x,r))}{\omega_d r^d} , \qquad
\Theta^{d,*}(x,V):= \limsup_{r\to 0^+} \frac{\|V\|(B(x,r))}{\omega_d r^d} .
$$
If the two limits coincide, we call their common value the density and denote it by $\Theta^d(x,V)$.
Given a $C^1$ diffeomorphism $\psi:\R^n\to\R^n$, we define the varifold $\psi_\#V$ such that for every $\Phi\in C^0_c(\mathbb{R}^n\times G(n,d))$
$$\int_{\mathbb{R}^n\times G(n,d)}\Phi(x,T)d(\psi_\#V)(x,T)=\int_{\mathbb{R}^n\times G(n,d)}\Phi(\psi(x),d_x\psi(T))J\psi(x,T) dV(x,T).$$
Here $d_x\psi(T)$ is the image of $T$ under the map $d_x\psi$ and
\[
J\psi(x,T):=\sqrt{\det\Big(\big(d_x\psi\big|_T\big)^*\circ d_x\psi\big|_T\Big)}
 \]
 denotes the $d$-Jacobian determinant of the differential $d_x\psi$ restricted to the $d$-plane $T$. If $V= \mathcal{H}^d \restr \Sigma \otimes \delta_{T_x \Sigma}$  is the varifold canonically associated to a smooth $d$-dimensional submanifold $\Sigma$, the pushforward $\psi_\#V$ is simply the image $\psi(\Sigma)$ of $\Sigma$ under the map $\psi$.

Let  \(\eta^{x,r}(y)=(y-x)/r\), then we call tangent measures of a measure $\mu$ at $x$ all limits of the following form:
$$
\mu_{x,r_j}:=\frac{1}{\mu(B(x,r_j))} (\eta^{x,r_j}_\#\mu)\restr {B(0,1)} \rightharpoonup \sigma \in {\rm Tan}(x, \mu).
$$
Analogously, we call tangent varifolds of a varifold $V$ at $x$ all the following limits:
\begin{equation*}
V_{x,r_j}:=\frac{r_j^d}{\|V\|(B(x,r_j))}\big((\eta^{x,r_j})_\#V  \big)\restr {B(0,1)\times G(n,d)}\rightharpoonup W \in {\rm Tan}(x, V).
\end{equation*}
Comparing the definition of tangent varifolds with the one of tangent measures, the additional factor $r_j^d$ compensates for the presence of the Jacobian determinant in the definition of $(\eta^{x,r_j})_\#V$.
Moreover, we note that in defining tangent measures and tangent varifolds we rescale by the mass. This will be useful to guarantee the existence of such limits and that they are non-trivial, as the lack of a monotonicity formula prevents us from obtaining density bounds.
\subsection{Anisotropic integrands}
Given an \emph{anisotropic integrand} $F \in C^1(G(n,d))$, we can extend the anisotropic energy to any varifold $V$ as follows
$$
\mathbf{F}(V):= \int_{\R^n\times G(n,d)} F(T) d V(x,T).
$$
We can then define the anisotropic first variation of a varifold $V$ as
\[
\delta_FV(g) := \left.\frac{d}{d\varepsilon}\right|_{\varepsilon = 0}\mathbf{F}((\mathrm{Id}+\varepsilon g)_\#(V)), \quad \forall g \in C^1_c(\mathbb{R}^n,\mathbb{R}^n).
\]
One can compute \cite[Appendix A]{DPDRG2} that the anisotropic first variation of $V$ with respect to $F$ is the order-one distribution whose
action on $g \in C_c^1(\mathbb{R}^n, \mathbb{R}^n)$ is given by
\[ \delta_F V(g) = \int_{\mathbb{R}^n \times G(n,d)} B_F(T): Dg(x) \, dV(x,T). \]
Here $B_F(T) \in \mathbb{R}^{n\times n}$ is the explicit matrix defined as
$$
B_F(T) = F(T)T + T^\perp dF(T)T, \qquad \mbox{where }\quad dF(T) := DF(T) +DF(T)^t,
$$
$DF(T)$ denotes the differential of $F$ once extended to a $C^1$ function defined in a small neighborhood of $G(n,d)$ in $\mathbb{R}^{n\times n}$, we identify the $d$-plane $T$ with the orthogonal projection of $\mathbb{R}^n$ onto $T$, and we denote by $A : B:=\operatorname{tr}(A^TB)$ the Frobenius scalar product between matrices.

We say that $\delta_F V$ is a Radon measure if this first variation is represented by integration against some vector-valued Radon measure.
We say that $V$ has $F$-mean curvature in $L^p$ if there exists a map $H\in L^p(\|V\|;\R^n)$ such that
\begin{equation}\label{boundedmean}
\delta_F V(g) = -\int_{\mathbb R^n}\langle H(x),g(x) \rangle d\|V\|(x), \quad \forall g \in C_c^1(\mathbb{R}^n, \mathbb{R}^n).
\end{equation}
If $H$ can be chosen to be $0$, that is $\delta_F V = 0$, we say that $V$ is $F$-\emph{stationary}. For instance, if $\Sigma$ is a smooth $F$-minimal surface, then $V = \mathcal{H}^d\restr \Sigma \;\otimes\; \delta_{T_x \Sigma}$ is $F$-stationary.

Ultimately, in order to prove a rectifiability theorem for $F$-stationary varifolds, we will need to show that $V$ has atomic tangent plane distributions, in the sense that at $\|V\|$-almost every point $x$ there is a unique tangent plane, which in turn identifies an invariant space for the blow-up varifolds at $x$. This leads to the key notion of the \emph{atomic condition} for $F$. To define this ellipticity condition, one can show that, for $\|V\|$-almost every $x$, any tangent varifold $W$ of $V$ at $x$ has the form
\[ dW(y,T) = d\sigma(y) \otimes d\mu_x(T) \]
for some measure $\sigma$ on $\mathbb{R}^n$ and some probability measure $\mu_x$ on $G(n,d)$. Intuitively, $\mu_x$ describes the distribution of tangent planes of $V$ at $x$. The $F$-stationarity of $W$ implies
\[ 0 = \delta_F W(g) = \int  B_F(T)\!:\!Dg(y) \, dW(y,T) = \int_{\mathbb{R}^n} A_F( \mu_x ): Dg(y)\, d\sigma(y) \]
for all $g\in C^1_c(\mathbb{R}^n,\mathbb{R}^n)$, where
\[ A_F(\mu_x) := \int_{G(n,d)} B_F(T)\, d\mu_x(T) \in \mathbb{R}^{n\times n}. \]
 By integration by parts, the above condition yields $A_F(\mu_x) D\sigma = 0$ in the sense of distributions, which can be interpreted as saying $\sigma$ is invariant along all directions spanning $\mathrm{Im}(A_F(\mu_x)^T)$. Denote $W_x := \mathrm{Im}(A_F(\mu_x)^T)$, which is a linear subspace of dimension $\dim W_x = n-\dim(\mbox{Ker} A_F(\mu_x))$. $F$-stationarity thus forces $\sigma$ to be invariant under translations along $W_x$. If $\dim W_x<d$, then $W$ may concentrate on sets of dimension strictly smaller than $d$, which is not possible for rectifiable varifolds. This motivates the following definition we introduced in collaboration with De Philippis and Ghiraldin \cite{DPDRG2}:
\begin{defin}[\cite{DPDRG2}]
We say $F$ satisfies the \emph{atomic condition} if for every probability measure $\mu$ on $G(n,d)$, the following properties hold:
\begin{enumerate}
\item $\dim \mbox{Ker} A_F(\mu) \le n-d$;
\item if $\dim \mbox{Ker} A_F(\mu) = n-d$, then in fact $\mu$ is a Dirac delta on a $d$-plane (i.e., $\mu = \delta_T$ for some $T\in G(n,d)$).
\end{enumerate}
\end{defin}
In intuitive terms, the atomic condition means that whenever you take an average of $B_F$ weighted by $\mu$, the result cannot have the ``full'' kernel dimension $n-d$ unless $\mu$ is supported on a single $d$-plane. Equivalently, as we will see, it means that an $F$-stationary varifold cannot \emph{oscillate between multiple tangent planes} at almost every point.
This condition holds trivially in the special case $F\equiv 1$, i.e., for the area integrand, as in this case $B_F(T)=T$, see the proof sketch of Theorem~\ref{thm:sacvsac} below.

In the general anisotropic case, the atomic condition will serve as our primary \emph{ellipticity assumption} on $F$.
The atomic condition looks difficult to check on specific instances of anisotropic integrands, even numerically. Hence a lot of research has been devoted to relating the atomic condition to more classical ellipticity conditions, as we discuss in the following sections.
\subsection{Convexity notions in the vectorial graphical setting}\label{ellipticitynotions}
We now recall the relevant convexity notions for functions of matrices that arise in the vectorial calculus of variations, and refer to the books \cite{Dac,Mull,Rindler2018} for a thorough description of the topic. Let $f: \mathbb{R}^{(n-d)\times d} \to \mathbb{R}$ be a continuous function on the space of real $(n-d)\times d$ matrices. One should interpret these matrices as gradients of functions $u:\Omega \subset \mathbb{R}^d\to \mathbb{R}^{n-d}$, so that one can look at the functional
$$\mathcal{F}(u)=\int_\Omega f(Du(x))dx, \qquad u:\Omega\subset \mathbb{R}^d\to \mathbb{R}^{n-d},$$
as an anisotropic energy associated to the $d$-dimensional graph of $u$ in $\mathbb{R}^n$.
\begin{defin}
We say $f$ is convex if for all $A,B \in \mathbb{R}^{(n-d)\times d}$ and all $\theta \in [0,1]$,
\[ f(\theta A + (1-\theta)B) \;\le\; \theta\,f(A) + (1-\theta)\,f(B). \]
We say $f$ is rank-one convex if the above inequality holds whenever $\mathrm{rank}(A - B) \le 1$. Equivalently, $f$ is rank-one convex if it is convex along every affine line in $\mathbb{R}^{(n-d)\times d}$ in a direction of rank at most 1.
\end{defin}
It is immediate from the definitions that ``convex $\implies$ rank-one convex'', since rank-one perturbations form a subclass of all perturbations. The notion of rank-one convexity arises naturally in connection with microstructures and laminations in composite materials. In many variational problems involving multiple phases or multiple wells, the absence of rank-one convexity is what allows for fine-scale oscillations leading to microstructure. Rank-one convexity is a necessary condition for quasiconvexity, as we will see.
\begin{defin}
We say $f$ is quasiconvex if for every bounded domain $\Omega \subset \mathbb{R}^d$ (say a unit cube) and every smooth function $\varphi: \Omega \to \mathbb{R}^{n-d}$ compactly supported in $\Omega$, one has
\[ \frac{1}{\mathcal{H}^d(\Omega)} \int_{\Omega} f(A + \nabla \varphi(x))\, dx \;\ge\; f(A), \]
for every constant matrix $A \in \mathbb{R}^{(n-d)\times d}$. If strict inequality holds for all nonzero $\varphi$, we say $f$ is strictly quasiconvex.
\end{defin}
Quasiconvexity, introduced by Morrey \cite{Morrey1, Morrey2}, is an appropriate generalization of convexity in the multidimensional calculus of variations. In fact, it is equivalent to the weak lower semicontinuity of the functional $\mathcal{F}$ in $W^{1,p}$ under standard $p$-growth assumptions, hence providing good existence results for minimizers of $\mathcal{F}$. It is known that
$$\mbox{convex $\implies$ quasiconvex},$$
 by Jensen's inequality applied to $f$.
 Far less trivial is Morrey's result that
 $$\mbox{quasiconvex $\implies$ rank-one convex}.$$
  Intuitively, a rank-one perturbation $A + t(a\otimes b)$ with $a\in \mathbb{R}^{n-d}, b\in \mathbb{R}^d$ can be realized as a gradient perturbation in a fine one-dimensional laminate, so the quasiconvexity inequality along that oscillation forces second derivatives in rank-one directions to be nonnegative.
Quasiconvexity is difficult to verify, and explicit examples are correspondingly scarce \cite{Sve1}. Hence, Ball introduced the notion of polyconvexity as a convenient sufficient condition for existence of minimizers in nonlinear elasticity~\cite{Ball1,Ball2}.
\begin{defin}[Polyconvexity]\label{def:poly}
$f: A\in \mathbb{R}^{(n-d)\times d} \to f(A) \in \mathbb{R}$ is polyconvex if $f$ is a convex function of the minors of $A$.
\end{defin}
Clearly,
$$\mbox{polyconvexity $\implies$ quasiconvexity.}$$
Indeed, denoting with $T(A)$ the vector of all minors of $A$, if $f$ is polyconvex then $f(A) = g(T(A))$ for some $g$ convex. Then by Jensen's inequality, and using that minors are null Lagrangians,
\[ f(A) = g(T(A)) \le \frac{1}{\mathcal{H}^d(\Omega)} \int_\Omega g(T(A+\nabla\varphi)) = \frac{1}{\mathcal{H}^d(\Omega)}\int_\Omega f(A+\nabla\varphi), \]
showing quasiconvexity. 

The idea behind polyconvexity is that even if $f$ is not convex in the matrix entries, it might become convex after ``lifting'' to the space of all minors. For example, consider $d=2, n=4$. If we list the minors of a $2\times 2$ matrix $A$ as $(A, \det A) \in \mathbb{R}^5$, then a function like $f(A) = \det A$ (which is \emph{not} convex in the four entries of $A$) becomes linear (and hence convex) in the extended space. So $f(A)=\det A$ is polyconvex. 

Polyconvexity is strictly stronger than quasiconvexity: for instance, Alibert and Dacorogna \cite{AD} and  Šverák \cite{Sve2} constructed continuous integrands on $2\times 2$ matrices that are quasiconvex but not polyconvex.
In fact, in collaboration with Lei and Young, we have recently shown a sharp characterization: if one requires quasiconvexity to hold not just for single-valued perturbations but for multivalued (Q-valued) perturbations of any order, then that stronger condition is equivalent to polyconvexity \cite{DRLY}. We will elaborate on this in the mini-course.
In summary, we have a hierarchy of implications:
\[ \text{Convex} \;\implies\; \text{Polyconvex} \;\implies\; \text{Quasiconvex} \;\implies\; \text{Rank-one convex}. \]
For $1<d<n-1$, there are known counterexamples to all the reverse implications, with one exception: for $d=2, n=4$, it is a famous open conjecture of Morrey~\cite{Morrey2} whether every rank-one convex integrand must be quasiconvex.
It is important to remark that in codimension one, i.e., when $d=n-1$, all the above convexity conditions coincide.

We conclude recalling the notion of quasiconvexity of $f : \mathbb{R}^{(n-d)\times d} \to \mathbb{R}$ with respect to $Q$-valued Lipschitz graphs, where $Q\in \N$.
To define $Q$-valued functions, let $\delta_{x_i}$ be the Dirac mass in $x_i\in \mathbb R^{n-d}$ and let
\begin{equation*}
\mathcal{A}_Q(\mathbb{R}^{n-d}) :=\left\{\sum_{i=1}^Q\delta_{x_i}\,:\,x_i\in\mathbb R^{n-d}\;\textrm{for every  }i=1,\ldots,Q\right\},
\end{equation*}
equipped with the Wasserstein distance.
A $Q$-valued function is then a map from an open set $U\subset \mathbb R^d$ to $\mathcal{A}_Q(\mathbb{R}^{n-d})$; a Lipschitz $Q$-valued function is a Lipschitz map from $U$ to $\mathcal{A}_Q(\mathbb{R}^{n-d})$. For $\{X_i\}_{i=1}^Q\subset \R^{(n-d)\times d}$, let $\bar{f} : ( \mathbb{R}^{(n-d)\times d})^Q\to  \mathbb{R}$ be defined as
  \begin{equation}\label{integ}
      \bar{f}(X_1, \ldots, X_Q):=\sum_{i=1}^Q f(X_i).
  \end{equation}
  Since $\bar{f}$ is invariant under permutations of $X_i$, one can naturally extend the definition of $\bar{f}$ to $\mathcal{A}_Q(\mathbb{R}^{(n-d)\times d})$.
\begin{defin}\label{qc}
We say that $f$ is quasiconvex for Lipschitz $Q$-valued functions if, for every affine $Q$-valued function
$u (x)= \sum_{j=1}^J Q_j \delta_{a_j + L_j x}$, with $x\in \Omega$, $L_j\in  \mathbb{R}^{(n-d)\times d}$, $a_j\in  \mathbb{R}^{n-d}$, and $Q= \sum_{j=1}^J Q_j$, the following holds.
Given any collection of Lipschitz $Q_j$-valued functions $\varphi^j : \Omega\to \mathcal{A}_{Q_j}(\mathbb{R}^{n-d})$
with $\varphi^j|_{\partial \Omega}(x) = Q_j \delta_{a_j +L_jx}$, we have the inequality
$$
\bar{f}\big(Du\big)\leq  \frac{1}{\mathcal{H}^d(\Omega)}\int_{\Omega} \bar{f}\big(D\varphi^1(x), \ldots, D\varphi^J(x)\big)dx.
$$
\end{defin}
\section{Regularity theory}
\subsection{Rectifiability of $F$-stationary varifolds}
For the area integrand $F \equiv 1$, Allard proved that if a varifold $V$ has locally bounded first variation and has positive density $\|V\|$-almost everywhere, then it must be rectifiable. However, for a general anisotropic integrand $F$, the situation is more subtle: the atomic condition for $F$ is necessary and sufficient to deduce rectifiability of $F$-stationary varifolds. Here we give a brief sketch of the proof of the anisotropic rectifiability theorem that we proved in collaboration with De Philippis and Ghiraldin \cite{DPDRG2}.
We are going to use the following result, which we prove in \cite[Lemma 2.2]{DPDRG2}. This essentially corresponds to \cite[Theorem 4.5]{FraMan}, see also \cite[Theorem 16.7]{Mattila}.
\begin{lemma}[\cite{DPDRG2}] \label{rectlemm}
Let \(\nu\) be a positive Radon measure on $\mathbb{R}^n$ such that:
\begin{itemize}
\item[(i)] For \(\nu\)-a.e. \(x\) there exists \(T_x \in G(n,d)\) such that every \(\sigma\in {\rm Tan}(x,\nu)\)  is translation invariant along \(T_x\).
\item[(ii)] For \(\nu\)-a.e. \(x\), \(0<\Theta^d_*(x,\nu)\le \Theta^{d,*}(x,\nu)<+\infty\).
\end{itemize}
Then \(\nu\) is a \(d\)-rectifiable measure, that is, there exists an $\mathcal{H}^d$-rectifiable set $K$ and a positive Borel function $\theta$ such that \(\nu= \theta \mathcal{H}^d \restr K\), and for \(\nu\)-a.e. $x$, \(T_x\) coincides with the tangent plane of $K$ at $x$.
\end{lemma}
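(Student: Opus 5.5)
The plan is to reduce to the Marstrand--Mattila rectifiability criterion in the form ``positive finite densities plus flat tangent measures imply rectifiability''. We first upgrade the invariance in (i) to the statement that tangent measures are multiples of $\mathcal H^d\restr T_x$, and then pass from a flat-tangent condition to rectifiability by a Lipschitz graph decomposition.

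\textbf{Step 1: structure of tangent measures.} By (ii), after discarding a $\nu$-null set and decomposing into countably many pieces $E_{k}=\{x: 1/k\le \Theta^d_*\le\Theta^{d,*}\le k\}$, we may work on sets where the density ratios are uniformly bounded above and below for all small radii. At $\nu$-a.e.\ point of $E_k$ (a density point of $E_k$ for $\nu$), every tangent measure $\sigma$ satisfies two properties:
\begin{itemize}
\item it is the limit of rescalings of $\nu\restr E_k$;
\item it satisfies $c r^d\le \sigma(B(y,r))\le C r^d$ for $y\in\supp\sigma$ and $0<r<1-|y|$, via Preiss' principle that tangents to tangents are tangents.
\end{itemize}
By (i), $\sigma$ is invariant along the $d$-plane $T_x$. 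Write $\sigma$ locally as $\mathcal H^d\restr T_x\otimes\kappa$ with $\kappa$ a measure on $T_x^\perp$. The upper $d$-dimensional density bound forces $\kappa$ to have finitely many atoms in the unit ball. Indeed, any spread of $\kappa$ in $T_x^\perp$ would give $\sigma(B(y,r))\gg r^d$ for small $r$ if $\kappa$ had a non-atomic part, by comparison with $r^{d}\kappa(B^{\perp}(y',r))/r^{0}$. Combined with the lower bound, this shows that $\kappa$ is a finite sum of atoms with masses bounded below.

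\textbf{Step 2: cone condition and conclusion.} The key consequence of Step 1 is an approximate flatness property. For $\nu$-a.e.\ $x$, and every $\epsilon>0$, there is $r_0$ such that for $r<r_0$ all of $\supp\nu\cap B(x,r)$ lies within distance $\epsilon r$ of finitely many translates $y_i+T_x$. More usefully, near $x$ itself the mass in the cone $\{y:\dist(y-x,T_x)>\epsilon|y-x|\}\cap B(x,r)$ is $o(r^d)$. We obtain this by contradiction: if it failed along a sequence $r_j$, the blow-ups would converge (using the upper density bound for compactness) to a tangent measure charging the closed cone away from the origin. That tangent measure is invariant along $T_x$, so its support is a union of planes parallel to $T_x$. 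Such a plane meeting the open cone at a point $y$ with $|y|\in(\epsilon',1)$ is admissible, but we also have the lower density bound at the origin. We then refine by using the atomic structure and choosing $x$ to be a Lebesgue point of $x\mapsto T_x$, which is $\nu$-measurable, for instance by restricting to compact sets where it is continuous, by Lusin's theorem.

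Now fix a compact set $K'$ where $T_x$ is continuous and close to a fixed plane $T_0$, and where the cone estimate holds uniformly below a fixed scale. The uniform lower density, together with the $o(r^d)$ mass in cones, upgrades to the statement that there are no points of $K'$ in the cones at all. If $z\in K'$ lay in the cone of $x$ at scale $r=|z-x|$, the ball $B(z,\epsilon r)$ would carry mass $\ge c(\epsilon r)^d$ inside a slightly larger cone, contradicting the $o(r^d)$ estimate. Hence $K'$ is locally a Lipschitz graph over $T_0$, so $\nu\restr K'$ is carried by a rectifiable set. Covering $\nu$-almost all of $\R^n$ by countably many such $K'$ gives a rectifiable $K$. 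Since $\nu\ll\mathcal H^d\restr K$ and $\mathcal H^d\restr K\ll\nu$ by the density bounds, the Radon--Nikodym theorem gives $\nu=\theta\mathcal H^d\restr K$. Finally, the approximate tangent plane of $K$ at a.e.\ point is the unique plane along which tangent measures are invariant, so it equals $T_x$.

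\textbf{Main obstacle.} The delicate part is the passage from mere translation invariance of tangent measures to the cone-emptiness estimate. Invariance alone allows tangent measures supported on several parallel planes, so the cone-mass estimate at $x$ must exploit that $0\in\supp\sigma$ and that the relevant parallel sheets lie in $T_x$ directions only. The upgrade from ``small mass'' to ``no points'' also relies crucially on the uniform lower density on $K'$. Rather than redo this by hand, I would first try to invoke directly the classical criterion \cite[Theorem 16.7]{Mattila}, whose hypothesis ``every tangent measure is supported in a $d$-plane up to the density structure'' is exactly what Step 1 establishes.
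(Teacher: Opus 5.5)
Your overall route (extract the structure of tangent measures, then feed it to a Marstrand--Mattila/Preiss-type criterion) matches the paper's. The paper does not reprove the lemma: it imports it from \cite{DPDRG2} as a consequence of Preiss's criterion \cite[Theorem 5.3]{Preiss} (cf.\ \cite[Theorem 4.5]{FraMan}, \cite[Theorem 16.7]{Mattila}). But there is a genuine gap exactly at what you call the ``main obstacle'', and the proposal does not close it.

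Step 1 only shows that each $\sigma\in{\rm Tan}(x,\nu)$ has the form $\sum_i c_i\,\mathcal H^d\restr(T_x+z_i)$ with finitely many $z_i\in T_x^\perp$. Incidentally, it is the lower density bound, not the upper one, that makes $\kappa$ atomic: a diffuse part would give $\sigma(B(y,r))=o(r^d)$. The criterion you want to quote needs every tangent measure to be flat, $\sigma=c\,\mathcal H^d\restr V$ with $V$ a plane through $0$. A measure with a second sheet $T_x+z$, $z\neq 0$, is not even supported in a $d$-plane. So your claim that Step 1 ``is exactly'' the hypothesis of \cite[Theorem 16.7]{Mattila} is false, even by your own paraphrase of that hypothesis.

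Step 2 does not supply the missing flatness either. Given the density bounds, the cone estimate $\nu(\{y:\dist(y-x,T_x)>\epsilon|y-x|\}\cap B(x,r))=o(r^d)$ is equivalent to every tangent measure at $x$ being $c\,\mathcal H^d\restr T_x$. Your contradiction argument only produces a tangent measure containing a sheet $T_x+z$ with $\epsilon<|z|<1$. Inside $B(0,1)$ that sheet lies entirely in the cone $\{\dist(y,T_x)>\epsilon|y|\}$ and carries positive mass there.

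Nothing you invoke is violated by such a $\sigma$:
\begin{itemize}
\item the sheet through $0$ accounts for the lower density at the origin;
\item $\sigma$ is invariant along $T_x$;
\item translates and blow-ups of $\sigma$ at points of its support are again multi-sheet, so they are compatible with the tangent-to-tangent principle;
\item choosing $x$ as a Lusin or Lebesgue point of $y\mapsto T_y$ says nothing about parallel sheets.
\end{itemize}
So the contradiction never materializes. The ``no points in cones'' upgrade and the Lipschitz-graph decomposition both presuppose the $o(r^d)$ estimate, so they are not available either.

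What is missing is one of two things. Either an argument excluding, for $\nu$-a.e.\ $x$, the atoms of $\kappa$ away from $0$, or a rectifiability criterion that genuinely accepts multi-sheet tangent measures. This is the nontrivial content the paper delegates to \cite{DPDRG2} and Preiss's theorem, and your proposal leaves it unproved.
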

The lemma above is a consequence of a rectifiability criterion due to Preiss \cite[Theorem 5.3]{Preiss}. The following is the main rectifiability theorem proved in \cite{DPDRG2}:
\begin{theorem}[\cite{DPDRG2}]\label{thm:rect}
  \(F\) satisfies the atomic condition if and only if for every $d$-varifold \(V\) whose anisotropic first variation $\delta_FV$ is a Radon measure, the varifold
\begin{equation}\label{Vstar}
V_* :=V\restr (\{x\in \R^n  : \Theta^d_*(x,V)>0\}\times G(n,d))
\end{equation}
  is rectifiable.
\end{theorem}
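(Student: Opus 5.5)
We prove the two implications separately. The sufficiency (atomic condition $\Rightarrow$ rectifiability) is the substantial one, and we plan to deduce it from Lemma~\ref{rectlemm} applied to $\nu=\|V_*\|$.

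\textbf{Step 1: density bounds.} Hypothesis (ii) of Lemma~\ref{rectlemm} requires $0<\Theta^d_*<\infty$ for $\|V_*\|$-a.e.\ point. Positivity holds by the definition of $V_*$. For the upper bound, since $\delta_F V$ is a Radon measure, we use a differentiation argument: the set where $\Theta^{d,*}(x,V)=+\infty$ should be $\|V\|$-null. The natural route is through the first variation, testing with radial-type vector fields. Because no monotonicity is available, we instead argue on blow-ups. At a point where the density ratio blows up along $r_j$, the normalized tangent varifolds converge to a limit $W$ that is $F$-stationary, since $|\delta_F V|(B(x,r_j))/\|V\|(B(x,r_j))\cdot r_j\to 0$ at $\|V\|$-a.e.\ such point by Besicovitch differentiation of $|\delta_FV|$ against $\|V\|$. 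If this route turns out to be delicate, we will note that the rescaled measures are normalized by mass, so only the structure of $W$ matters. In that case we reduce the upper bound to the product structure plus the invariance derived below: an invariant measure along a $d$-plane has $\sigma(B_r)\sim r^d$, which is compatible only with finite density ratios.

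\textbf{Step 2: structure of tangent varifolds.} At $\|V\|$-a.e.\ $x$, disintegrate $V=\|V\|\otimes\mu_y$. By Lebesgue points of $y\mapsto\mu_y$ (taken with respect to the weak topology, using a countable dense family of test functions on $G(n,d)$), every $W\in{\rm Tan}(x,V)$ has the form $\sigma\otimes\mu_x$ with $\sigma\in{\rm Tan}(x,\|V\|)$. The first variation condition from Step 1 gives $\delta_FW=0$, hence $A_F(\mu_x)D\sigma=0$ as computed in the text. Thus $\sigma$ is invariant along $W_x={\rm Im}(A_F(\mu_x)^T)$, with $\dim W_x\ge d$ by part 1 of the atomic condition.

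\textbf{Step 3: the dimension of $W_x$ is exactly $d$, and the conclusion.} If $\dim W_x>d$, then $\sigma$ is invariant in more than $d$ directions, so $\sigma(B_r)\gtrsim r^{d+1}$ near $0$. We expect this to contradict the positive lower density combined with the finite upper density, by a standard argument comparing different scales of blow-ups: tangents to tangents are tangents at a.e.\ point (Preiss). Ruling out $\dim W_x>d$ using only $\Theta_*>0$ and $\Theta^*<\infty$ is the step we regard as the main obstacle. Once $\dim W_x=d$, we have $\dim{\rm Ker}A_F(\mu_x)=n-d$, so part 2 of the atomic condition gives $\mu_x=\delta_{T_x}$. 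Lemma~\ref{rectlemm} with $T_x=W_x$ then yields $\|V_*\|=\theta\mathcal H^d\restr K$ with $T_x=T_xK$. Since also $\mu_x=\delta_{T_xK}$, the varifold $V_*$ is rectifiable.

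\textbf{Necessity.} Suppose the atomic condition fails for some probability measure $\mu$. We construct a non-rectifiable $F$-stationary varifold. If $\dim{\rm Ker}A_F(\mu)>n-d$, take $V=\mathcal H^k\restr L\otimes\mu$ on a plane $L$ of dimension $k<d$ containing ${\rm Im}(A_F(\mu)^T)$, suitably cut off; more simply, the unbounded $V=\mathcal H^{d}\restr S\otimes\mu$ works too. If $\dim{\rm Ker}=n-d$ but $\mu$ is not a Dirac mass, take $V=\mathcal H^d\restr W\otimes\mu$ with $W={\rm Im}(A_F(\mu)^T)$. By $A_F(\mu)D\sigma=0$ this varifold is $F$-stationary and has positive density, so it lies in $V_*$, yet it is not rectifiable because its plane distribution is not atomic.

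Locality is handled by working with unbounded stationary varifolds, which is allowed since the statement only requires $\delta_FV$ to be Radon. If a compactly supported example is needed, we multiply by a cutoff, which produces a first variation that is a Radon measure.
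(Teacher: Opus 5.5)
\textbf{Gap: the upper density bound in Step 1.} Hypothesis (ii) of Lemma~\ref{rectlemm} requires $\Theta^{d,*}(x,V)<\infty$ for $\|V_*\|$-a.e.\ $x$, and neither route you sketch can give it. Testing $\delta_F V$ with radial fields is exactly the monotonicity argument, which the paper recalls fails for general $F$. The blow-up route fails for a structural reason. Tangent varifolds are normalized by $\|V\|(B(x,r_j))$, so they keep no information about the ratio $\|V\|(B(x,r))/r^d$. A flat, $F$-stationary tangent $c\,\mathcal{H}^d\restr T_x\otimes\delta_{T_x}$ is therefore compatible with infinite density. Indeed, there are classical examples (Preiss) of measures on $\mathbb{R}$ that are singular with respect to $\mathcal{L}^1$, hence have infinite $1$-density at a.e.\ point, yet have only multiples of $\mathcal{L}^1$ as tangent measures at a.e.\ point. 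Your sentence that an invariant measure along a $d$-plane ``is compatible only with finite density ratios'' confuses the normalized blow-up with $\|V\|$ itself. What excludes such behaviour is the Radon bound on $\delta_F V$ used at \emph{finite} scales, not the stationarity of the limit.

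The missing idea is the paper's step (b), namely $\|V_*\|\ll\mathcal{H}^d$. It is proved with an anisotropic version of Allard's strong constancy lemma: if $\sup_j|\delta_F V_j|(B(0,1))<\infty$ and $\int|T-T_x|\,dV_j\to 0$, then $(\pi_{T_x})_\#\|V_j\|$ converges in total variation on each $B^d(0,t)$ to $\gamma\,\mathcal{H}^d$ with $\gamma\in L^1$. Apply it at a density point $x$ of the singular part of $\|V\|$ with respect to $\mathcal{H}^d\restr\{\Theta^d_*>0\}$. There almost all the mass of $V_j$ sits on an $\mathcal{H}^d$-null set whose projection is $\mathcal{H}^d$-null, which contradicts strong convergence to an absolutely continuous limit. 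Finiteness of $\Theta^{d,*}$ a.e.\ then follows, because $\{\Theta^{d,*}(\cdot,\|V\|)=\infty\}$ is always $\mathcal{H}^d$-null.

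This forces an ordering. The vanishing-tilt hypothesis needs $\mu_x=\delta_{T_x}$, so your Steps 2--3 must come \emph{before} this argument and may use only $\Theta^d_*>0$. They can be done that way. If a tangent measure $\sigma$ is invariant along a $k$-plane with $k\ge d+1$, then $\sigma(B_\rho)\le\big(\rho^2/(1-\rho^2)\big)^{k/2}\sigma(B_1)$. By compactness of blow-ups this gives $\|V\|(B(x,\rho r))\le\rho^{d+1/2}\|V\|(B(x,r))$ for a fixed small $\rho$ and all small $r$, and iterating yields $\Theta^d_*(x,V)=0$. Your inequality $\sigma(B_r)\gtrsim r^{d+1}$ points the wrong way and contradicts nothing.

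The necessity part matches the paper's construction. In your ``more simply'' variant on a $d$-plane $S\supseteq\mathrm{Im}\,A_F(\mu)^T$, you must choose $S$ with $\mu\neq\delta_S$; otherwise that varifold is rectifiable.
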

\begin{proof}[Proof sketch.] Assume for simplicity that $V=V_*$. We divide the proof into two parts.

\textbf{Sufficiency} (atomic condition $\implies$ rectifiability result). One needs to show two key properties for any $V(dx,dT)=\|V\|(dx)\otimes \mu_x(dT)$ whose anisotropic first variation $\delta_FV$ is a Radon measure:
\begin{itemize}
\item[(a)] For $\|V\|$-a.e. $x$ there exists \(T_x \in G(n,d)\) such that $\mu_x=\delta_{T_x}$;
\item[(b)]  \(\|V\|\ll \mathcal{H}^d\).
\end{itemize}
Once (a) and (b) are established, one can invoke Lemma \ref{rectlemm} to conclude.
We first give the sketch of the proof of (a).
For \(\|V\|\)-a.e. $x$, every \(W\in {\rm Tan} (x, V)\) can be written as \(W( dy, dT)=\sigma(dy)\otimes \mu_x(dT)\) for some \(\sigma\in {\rm Tan} (x,\|V\|)\) and
$$
 0= \delta_{F} W(g) = \int_{B(0,1)} A_F(\mu_x) : Dg(y)  \, d\sigma (y), \quad \forall g \in  C_c^1(B(0,1),\R^n).
$$
Therefore  \(A_F(\mu_x)D \sigma=0\) in the sense of distributions.
Since \( \mbox{Ker} A_F(\mu_x)=(\mathrm{Im} A_F(\mu_x)^T)^\perp\), we get:
$$
\partial_e \sigma=0 \qquad \mbox{for all  $e\in T_x:=\mathrm{Im} A_F(\mu_x)^T$.}
$$
Then we conclude with the following three observations:
 \begin{itemize}
 \item atomic condition \quad $\Rightarrow$ \quad $\dim(T_x)\geq d$;
 \item positive lower density at $x$ \quad $\Rightarrow$ \quad $\dim(T_x)= d$;
 \item atomic condition \quad $\Rightarrow$ \quad $\mu_x=\delta_{T_x}$.
 \end{itemize}
To prove property (b), we prove the following lemma, inspired by the strong constancy lemma of Allard \cite{All4}. We consider the Radon-Nikodym decomposition $\|V\|=f\,\mathcal H^d \restr  \{\Theta^d_*(\cdot,V)>0\}+\|V\|^s$, where $\|V\|^s$ denotes the singular part of $\|V\|$ with respect to the measure $\mathcal H^d \restr  \{\Theta^d_*(\cdot,V)>0\}$, and we apply the lemma at a density point $x$ of $\|V\|^s$ such that
$$V_j:=V_{x,r_j}\rightharpoonup \sigma \otimes \delta_{T_x} \mbox{ in $B(0,1)$}.$$
\begin{lemma}[\cite{DPDRG2}]
If
$$\sup_j|\delta_{F} V_j |(B(0,1)) <\infty , \quad \mbox{ and } \quad  \lim_{j\to \infty}\int_{B(0,1)\times G(n,d)} |T-T_x|  dV_j(z,T) =0,$$
then there exists $\gamma \in L^1(B^d(0,1))$ (with respect to $\mathcal{H}^d$) such that for every $0<t<1$
$$
\Big| (\pi_{T_x})_{\#}\|V_j\| - \gamma\mathcal{H}^d\restr B^d(0,1)\Big|(B^d(0,t) ) \longrightarrow 0.
$$
\end{lemma}
\textbf{Necessity} (rectifiability result $\implies$ atomic condition). If $F$ does not enjoy the atomic condition, there exists some probability measure $\mu$ on $G(n,d)$ such that:
\begin{itemize}
\item either $\dim \mbox{Ker } A_F(\mu) > n-d$,
\item or $\dim \mbox{Ker } A_F(\mu) = n-d$ but $\mu \neq \delta_T$ for every $T\in G(n,d)$.
\end{itemize}
In either case, one can leverage $\mu$ to construct a $d$-varifold that is $F$-stationary and with positive density, yet that is not rectifiable, in the following way.
Define $W:=\mathrm{Im} A_F(\mu)^T$,  $k:=\dim W \leq d$ and
$$
V:=\mathcal{H}^k \restr W \otimes \mu.
$$
\(V\) is not \(d\)-rectifiable: if \(k<d\), then $\|V\|$ is supported on the $k$-dimensional plane $W$; if \(k=d\), then \(W\in G(n,d)\) and \(\mu\ne \delta_{W}\).
However, for  every  \(x\in W\) the density can be checked to be positive:
\begin{equation*}
\Theta^d(x,V)=\lim_{r \to 0} \frac{\mathcal{H}^k(B(x,r)\cap W)}{\omega_d r^d}=
\begin{cases}
1 \qquad &\textrm{if \(k=d\)}\\
+\infty &\textrm{if \(k<d\)}
\end{cases}>0.
\end{equation*}
Moreover $\delta_F V = 0$. Indeed, for every $g \in  C_c^1(\mathbb{R}^n,\mathbb{R}^n)$
$$
  \delta_F V(g) = \int_{W} A_F(\mu) : Dg  \, d\mathcal{H}^k  = \int_{W} \operatorname{tr}\big(A_F(\mu)^T \, Dg \, P_W\big)  \, d\mathcal{H}^k =0,
  $$
where $P_W$ denotes the orthogonal projection onto $W$. In the second equality we used that $A_F(\mu)^T=P_W A_F(\mu)^T$, since $\mathrm{Im}\, A_F(\mu)^T= W$, and in the last equality we used that $\int_W \partial_e \varphi \, d\mathcal{H}^k=0$ for every $\varphi\in C^1_c(\R^n)$ and every $e\in W$, as such derivatives are tangential to the plane $W$.
This constructs a counterexample to the validity of the rectifiability result and concludes the proof.
\end{proof}
One of the major applications of Theorem~\ref{thm:rect} is in developing an anisotropic min-max theory, as we will discuss in Section \ref{ch:mm}. However it also has applications to other settings, such as the anisotropic isoperimetric and capillary problems \cite{DRN,DNR,DRKS}.
\subsection{Atomic condition versus standard ellipticity}
Although the atomic condition is the natural ellipticity condition to assume on an anisotropic integrand, since it is equivalent to the rectifiability theorem, it may be difficult to check it on specific instances of integrands. The aim of this section is to remove this obstruction by comparing the atomic condition with classical notions of ellipticity.
\subsubsection{Codimension one}\label{cod1conv}
The following theorem, proved in collaboration with De Philippis and Ghiraldin \cite{DPDRG2}, completely settles the understanding of the atomic condition in codimension one.
\begin{theorem}[\cite{DPDRG2}]
If $d=n-1$, the atomic condition is equivalent to strict convexity in spherical directions of the 1-homogeneous extension of the function $G:\mathbb S^{n-1}\to \mathbb R$, defined as \(G(\nu):=F( \nu^{\perp})\), i.e.,
$$G(\nu)>\langle d_\nu G(\bar \nu),\nu\rangle, \quad \forall \nu\neq \pm \bar \nu \in \mathbb S^{n-1}.$$
\end{theorem}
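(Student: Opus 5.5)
We will compute $B_F(T)$ explicitly in codimension one and reduce the atomic condition to a statement about the $1$-homogeneous extension $G$. Write $T=\mathrm{Id}-\nu\otimes\nu$ for $T=\nu^\perp$, and extend $G$ $1$-homogeneously to $\mathbb{R}^n\setminus\{0\}$. Differentiating $F(\mathrm{Id}-\nu\otimes\nu)=G(\nu)$ along curves in the sphere gives the standard formula (as in \cite{DPDRG2})
\[
B_F(\nu^\perp)=G(\nu)\,\mathrm{Id}-\nu\otimes \nabla G(\nu).
\]
Euler's identity $\langle \nabla G(\nu),\nu\rangle=G(\nu)$ then shows $\nu\in\mathrm{Ker}\,B_F(\nu^\perp)$. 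Given a probability measure $\mu$ on $G(n,n-1)$, we view it as an even measure on $\mathbb S^{n-1}$ and use $B_F$ to write
\[
A_F(\mu)=\int\big(G(\nu)\,\mathrm{Id}-\nu\otimes\nabla G(\nu)\big)\,d\mu(\nu).
\]
Its kernel dimension is at most $1=n-d$ exactly when $A_F(\mu)$ has rank at least $n-1$.

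\emph{Sufficiency (strict convexity $\Rightarrow$ atomic).} Let $v\in \mathrm{Ker}\,A_F(\mu)^T$ with $|v|=1$, i.e.
\[
\int G(\nu)\,v\,d\mu=\int \langle \nu,v\rangle \nabla G(\nu)\,d\mu .
\]
Pairing with $v$ gives
\[
\int\big(G(\nu)-\langle\nu,v\rangle\langle\nabla G(\nu),v\rangle\big)\,d\mu=0 .
\]
The key step is the pointwise inequality $G(\nu)\ge \langle\nu,v\rangle\langle\nabla G(\nu),v\rangle$, with equality iff $\nu=\pm v$. Write $v=\langle\nu,v\rangle\nu+w$ with $w\perp\nu$. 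By homogeneity, $\langle\nabla G(\nu),v\rangle=\langle\nu,v\rangle G(\nu)+\langle \nabla G(\nu),w\rangle$. Strict convexity, applied at the base point $\nu$ in the direction of $v$ (respectively $-v$, choosing the sign so that $\langle\nu,v\rangle\ge0$), gives
\[
G(v)>\langle\nabla G(\nu),v\rangle .
\]
This alone does not yield the inequality we need, so we instead test $A_F(\mu)^T$ against vectors in an orthonormal frame. The goal is a signed quantity such as $\int G(\nu)-\langle \nabla G(\nu),e\rangle\langle\nu,e\rangle\,d\mu$, whose positivity, after symmetrizing over $\nu\mapsto-\nu$ and using evenness, comes from $G(\nu)+G(-\nu)\ldots$. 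Concretely, we will follow \cite{DPDRG2}: show that if the kernel of $A_F(\mu)^T$ contained two independent vectors, or one vector $v$ with $\mu\ne\delta_{v^\perp}$, then integrating the identity $\langle A_F(\mu)^T v, u\rangle=0$ for suitable $u$ produces $\int (G(u)-\langle\nabla G(\nu),u\rangle)\,\langle \nu, v\rangle\ldots\,d\mu=0$. Each integrand would be nonnegative by convexity and strictly positive off $\{\pm v\}$, which forces $\mu$ to concentrate on $\pm v$. The main obstacle is choosing the test vectors so that the integrand has a sign. We expect the right choice to be to use the kernel of $A_F(\mu)$ itself (not its transpose): for $w\in\mathrm{Ker}\,A_F(\mu)$ we get
\[
\int\big(G(\nu)w-\langle\nabla G(\nu),w\rangle\nu\big)\,d\mu=0,
\]
and pairing this with suitable vectors gives expressions of the form $G(\nu)\langle w,\cdot\rangle-\langle\nabla G(\nu),w\rangle\langle\nu,\cdot\rangle$. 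These are exactly the tangent-line gaps $G(w)-\langle\nabla G(\nu),w\rangle$ once rescaled, and such gaps are positive by strict convexity.

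\emph{Necessity (atomic $\Rightarrow$ strict convexity).} Suppose strict convexity fails: $G(\nu)\le\langle\nabla G(\bar\nu),\nu\rangle$ for some $\nu\ne\pm\bar\nu$. We build a measure $\mu=\alpha\delta_{\bar\nu}+\beta\delta_{\nu}$ (on planes), with $\alpha,\beta\ge0$ chosen by a continuity or intermediate value argument, such that $A_F(\mu)$ annihilates a suitable vector in $\mathrm{span}(\nu,\bar\nu)$. Both summands act as multiples of the identity on $\{\nu,\bar\nu\}^\perp$, so the kernel analysis reduces to a $2\times2$ determinant in that plane. The failed inequality makes this determinant change sign as the weights vary, hence vanish for some choice of $\alpha,\beta$. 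This yields a non-atomic $\mu$ with $\dim\mathrm{Ker}\,A_F(\mu)\ge 1=n-d$, contradicting the atomic condition.
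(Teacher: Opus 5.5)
\textbf{The necessity step fails as written.} For $\mu=\alpha\delta_{\bar\nu^\perp}+\beta\delta_{\nu^\perp}$ with $\alpha,\beta>0$ you have $A_F(\mu)=c\,\mathrm{Id}-\alpha\,\bar\nu\otimes\nabla G(\bar\nu)-\beta\,\nu\otimes\nabla G(\nu)$, where $c=\alpha G(\bar\nu)+\beta G(\nu)>0$. So $\mathrm{Ker}\,A_F(\mu)\subset\mathrm{span}(\bar\nu,\nu)$, and your reduction to a plane is fine. (Note that it is $A_F(\mu)^T$, not $A_F(\mu)$, that acts as $c\,\mathrm{Id}$ on $\{\nu,\bar\nu\}^\perp$.) In the basis $\{\bar\nu,\nu\}$ the restricted matrix has determinant
\[
\alpha\beta\bigl(G(\bar\nu)G(\nu)-pq\bigr),\qquad p=\langle\nabla G(\bar\nu),\nu\rangle,\quad q=\langle\nabla G(\nu),\bar\nu\rangle .
\]
Its sign does not depend on the weights. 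An intermediate value argument in $(\alpha,\beta)$ therefore only produces the trivial zeros $\alpha\beta=0$, where $\mu$ is a Dirac mass.

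Moreover, the failed inequality $G(\nu)\le p$ says nothing about $q$, and the bracket can be positive. For $n=2$, write $G(\cos\theta,\sin\theta)=\phi(\theta)$ with $\phi$ smooth, positive, $\pi$-periodic and $\phi(0)=\phi'(0)=\phi(\pi/4)=\phi'(\pi/4)=1$. Take $\bar\nu=(1,0)$ and $\nu=(1,1)/\sqrt2$. Then $p=\sqrt2\ge G(\nu)$, so strict convexity fails at this pair, but $q=0$. Hence every two-atom measure on these two lines has invertible $A_F(\mu)$.

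What is missing is a choice of atoms that share the same gradient.
\begin{itemize}
\item If $G$ is convex, failure means $G(\nu)=\langle\nabla G(\bar\nu),\nu\rangle$. Then $\nu$ minimizes $G-\langle\nabla G(\bar\nu),\cdot\rangle\ge0$, so $\nabla G(\nu)=\nabla G(\bar\nu)=:\xi$. Now $A_F(\mu)=c\,\mathrm{Id}-w\otimes\xi$ with $w=\alpha\bar\nu+\beta\nu$ and $\langle\xi,w\rangle=c$, so $A_F(\mu)w=0$.
\item If $G$ is not convex, pass to its convex envelope $\hat G$. By Carath\'eodory and compactness, a point $x_0\in\mathbb S^{n-1}$ with $\hat G(x_0)<G(x_0)$ can be written as $x_0=\sum_i s_i\nu_i$, $s_i>0$, with $\hat G(x_0)=\sum_i s_iG(\nu_i)$. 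For $\xi\in\partial\hat G(x_0)$, homogeneity gives $\hat G\ge\langle\xi,\cdot\rangle$ with equality at $x_0$. This forces $G(\nu_i)=\langle\xi,\nu_i\rangle$, hence $\nabla G(\nu_i)=\xi$. The $\nu_i$ do not lie on one line, since otherwise $\sum_i s_iG(\nu_i)\ge G(x_0)$. Then $\mu\propto\sum_i s_i\delta_{\nu_i^\perp}$ satisfies $A_F(\mu)x_0=0$.
\end{itemize}

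\textbf{Your sufficiency argument also stops short of a proof.} The pointwise inequality $G(\nu)\ge\langle\nu,v\rangle\langle\nabla G(\nu),v\rangle$ that you first propose is false in general. For $G(x)=\sqrt{x_1^2+100x_2^2}$, $v=(1,1)/\sqrt2$ and $\nu=(\cos 0.02,\sin 0.02)$, the left side is about $1.02$ and the right side about $1.5$. In your final version you never specify the test vector. The claim that the integrands are ``exactly the tangent-line gaps once rescaled'' is also not correct.

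The argument that works, and the one the paper sketches, takes $w\in\mathrm{Ker}\,A_F(\mu)\cap\mathbb S^{n-1}$ and pairs $A_F(\mu)w=0$ with $\nabla G(w)$:
\[
0=\int\bigl(G(w)G(\nu)-\langle\nabla G(w),\nu\rangle\langle\nabla G(\nu),w\rangle\bigr)\,d\mu(\nu).
\]
Positivity of the integrand for $\nu\ne\pm w$ comes from multiplying two strict inequalities:
\begin{itemize}
\item $|\langle\nabla G(w),\nu\rangle|<G(\nu)$, from strict convexity at the base point $w$ applied to $\pm\nu$;
\item $|\langle\nabla G(\nu),w\rangle|<G(w)$, from strict convexity at the base point $\nu$ applied to $\pm w$.
\end{itemize}
Both use the evenness of $G$. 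The integrand vanishes at $\nu=\pm w$ by Euler's identity and the oddness of $\nabla G$. Hence $\mu=\delta_{w^\perp}$. Then $\mathrm{Ker}\,A_F(\mu)=\mathrm{Ker}\,B_F(w^\perp)=\mathrm{span}(w)$ because $G(w)>0$, which gives both parts of the atomic condition at once.
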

\begin{proof}[Proof idea.]
 When $d=n-1$
$$
A_F(\mu)= \int_{ \mathbb S^{n-1}}\Big(G(\nu)\mbox{Id}-\nu \otimes d_\nu G(\nu)\Big)d\mu.
$$
If there exists \(\bar \nu\in \mbox{Ker }  A_F(\mu)\cap \mathbb S^{n-1}\), we compute:
\[
0=\langle d_\nu G(\bar \nu),A_F(\mu)\bar\nu \rangle=\int_{\mathbb S^{n-1}} G(\bar \nu)G(\nu)-\langle d_\nu G(\bar \nu),\nu\rangle\langle d_\nu G( \nu),\bar \nu\rangle d\mu(\nu) \geq 0.
\]
From this, it is not difficult to conclude that $\mu=\frac12(\delta_{-\bar \nu}+\delta_{\bar \nu})$, where with an abuse of notation we have lifted the measure $\mu$ to an even measure on $\mathbb S^{n-1}$.
\end{proof}
\subsubsection{Arbitrary codimension: necessary conditions for the validity of the atomic condition}
In arbitrary codimension, the situation is more complex, as there are many ellipticity conditions, as already discussed in Section \ref{ellipticitynotions} for the graphical setting.
The first result we obtained in arbitrary codimension in collaboration with Kolasiński \cite{DRK} is the following:
\begin{theorem}[\cite{DRK}]\label{almell}
The atomic condition implies Almgren's strict ellipticity, that is:
        \begin{displaymath}
          \mathcal{H}^d(S) > \mathcal{H}^d(D) \quad \Rightarrow  \quad \mathbf{F}(S) > \mathbf{F}(D)  \,,
        \end{displaymath}
        for every $d$-cube $D$ and every compact $d$-rectifiable set $S$ which cannot be retracted onto $\partial D$ via Lipschitz deformations.
\end{theorem}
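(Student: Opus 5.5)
We argue by contradiction, combining the rectifiability Theorem~\ref{thm:rect} with an existence-and-comparison argument built around the flat cube. First we reduce to a statement about planes. By Almgren's theory, it suffices to show that the flat cube $D$ is the \emph{unique} minimizer of $\mathbf{F}$ among compact $d$-rectifiable sets $S$ that cannot be Lipschitz-retracted onto $\partial D$. Uniqueness should be understood as strict inequality whenever $\mathcal{H}^d(S)>\mathcal{H}^d(D)$. So suppose the implication fails. Then there is a competitor $S$ with $\mathcal{H}^d(S)>\mathcal{H}^d(D)$ and $\mathbf{F}(S)\le \mathbf{F}(D)$.

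The plan has three steps.

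\emph{Step 1: produce a minimizer and a bad blow-up.} Minimize $\mathbf{F}$ in the class of compact rectifiable sets spanning $\partial D$, using a direct-method existence result for the anisotropic Plateau problem. Here a minimizing sequence of varifolds converges to an $F$-stationary limit $V$ with boundary $\partial D$. We need two facts about $V$.
\begin{itemize}
\item[(i)] It has positive lower density, which follows from an isoperimetric/deformation argument that only uses \eqref{ell}.
\item[(ii)] By Theorem~\ref{thm:rect}, which is available because $F$ is atomic, $V$ is rectifiable.
\end{itemize}
If $\mathbf{F}(S)\le\mathbf{F}(D)$ with $S\neq D$ in the strict-area sense, we get a minimizer different from $D$. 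Alternatively, by the cone/translation trick we may assume the competitor is translation invariant in some directions. Either way, we aim for a stationary varifold whose tangent plane distribution is not a single Dirac mass.

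\emph{Step 2: average to a constant-plane varifold.} Tile a large cube by copies of the rescaled competitor and let the scale go to zero. This is the same homogenization idea as in the \emph{Necessity} part of the proof of Theorem~\ref{thm:rect}. The limit is a varifold of the form $\mathcal{H}^d\restr D\otimes\mu$, where $\mu$ is the probability measure given by the normalized tangent plane distribution of $S$. Since $\mathcal{H}^d(S)>\mathcal{H}^d(D)$, the measure $\mu$ is not $\delta_{D}$. The projections onto $D$ of the tangent planes of $S$ must still cover $D$, because $S$ is not retractable onto $\partial D$. This should give the constraint that the $\mu$-average of the projection Jacobians is at least $\mathcal{H}^d(D)/\mathcal{H}^d(S)$, together with $\int F\,d\mu\cdot\mathcal{H}^d(S)\le F(D)\mathcal{H}^d(D)$.

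\emph{Step 3: contradict the atomic condition.} Consider the variation of $\mathbf{F}$ along linear maps $L$ fixing the boundary at first order. Minimality of $D$ against the laminated competitors is expressed as the stationarity of $\mathcal{H}^d\restr D\otimes\mu$ under deformations $g(x)=Lx\,\phi(x)$. We then aim to show that $A_F(\mu)$ has kernel containing $D^\perp$, that is, $\dim\operatorname{Ker}A_F(\mu)\ge n-d$ with $\mu\neq\delta_D$. This contradicts the atomic condition.

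The main obstacle is Step 3. Stationarity requires a genuine equality, while the inequality $\mathbf{F}(S)\le\mathbf{F}(D)$ only gives one-sided information. I would first try to close this gap by noting that $D$ minimizes among all laminates. Then the first variation in both directions $\pm L$ vanishes, since the laminate competitors form a two-sided family obtained by tilting $D$. If that fails, I would instead use the necessity construction in reverse: take $\mu$ to be the optimal tangent measure, a minimizer of $\int F\,d\mu$ under the covering constraint. Its Euler--Lagrange condition should be exactly $A_F(\mu)D^\perp=0$.
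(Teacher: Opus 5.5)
Your architecture matches the paper's: argue by contradiction, tile rescaled copies of a bad competitor over a grid on $D$, pass to a homogenized varifold over $D$ with constant tangent-plane distribution, and contradict the atomic condition. But the step you flag as the main obstacle is a genuine gap. It is closed by an idea you have in Step~1 and never use: tile a \emph{minimizer}, not an arbitrary competitor.

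Solve the set-theoretic anisotropic Plateau problem among compact rectifiable sets not Lipschitz-retractable onto $\partial D$. The minimizer $S$ still has $\mathbf{F}(S)\le\mathbf{F}(D)$ and $\mathcal{H}^d(S)>\mathcal{H}^d(D)$. If the infimum equals $\mathbf{F}(D)$, your failing competitor is already a minimizer. If it is smaller, non-retractability forces the projection of $S$ onto the plane $P_0$ of $D$ to contain $D$, so $\mathcal{H}^d(S)\ge\mathcal{H}^d(D)$. Equality would force $T_xS=P_0$ a.e., hence $\mathbf{F}(S)=\mathbf{F}(D)$, which is excluded.

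Now tile exactly this $S$. The set $S_i$ made of $i^d$ copies of $S/i$ has $\mathbf{F}(S_i)=\mathbf{F}(S)$. An algebraic-topology argument shows $S_i$ is still not retractable onto $\partial D$, so $S_i$ is itself a minimizer. Hence $\delta_F S_i=0$ on $\R^n\setminus\partial D$, including across the $(d-1)$-skeleton where the copies are glued; stationarity of single copies would never give you that. This stationarity passes to the varifold limit $W$ with no Euler--Lagrange computation.

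Neither of your fixes can replace this.
\begin{itemize}
\item Fix (a) is circular. Under the contradiction hypothesis $D$ is not known to minimize anything. Even if it did, you would get stationarity of $\mathcal{H}^d\restr D\otimes\delta_{P_0}$, not of the laminate limit.
\item Fix (b) fails because your scalar covering constraint is only necessary and is far too weak to encode spanning $\partial D$. In codimension one, let $F(\nu^\perp)=G(\nu)$ with $G$ a uniformly convex norm whose gradient at the normal $e_n$ of $D$ is not parallel to $e_n$. With $J_{P_0}$ your projection Jacobian, $F(T)/J_{P_0}(T)=G(\nu'/\nu_n,1)$ for $T=\nu^\perp$. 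This is a coercive convex function whose minimum is not at $0$, so it is minimized at a tilted plane $T^*\neq P_0$. Then $\mu=\delta_{T^*}$ with $\theta=1/J_{P_0}(T^*)>1$ satisfies both of your constraints and strictly beats $\mathbf{F}(D)$. Yet $\operatorname{Ker}A_F(\delta_{T^*})=(T^*)^\perp\not\supseteq P_0^\perp$.
\end{itemize}
The correct linear constraint in the oriented setting is the barycenter condition of Theorem~\ref{thm:DRLY}. Even there, stationarity comes from a minimizing sequence of currents, not from a Lagrange multiplier.

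Two smaller corrections. First, the homogenized limit is $W=\theta\,\mathcal{H}^d\restr D\otimes\mu$ with $\theta=\mathcal{H}^d(S)/\mathcal{H}^d(D)>1$ and $\mu$ the normalized Gaussian image of $S$. Writing $\mathcal{H}^d\restr D\otimes\mu$ with $\mu$ a probability measure violates conservation of mass. Moreover, $\mu\neq\delta_{P_0}$ does not follow from $\mathcal{H}^d(S)>\mathcal{H}^d(D)$ alone; it is $\theta$ together with the energy bound that gives the contradiction. Once $W$ is stationary the end is short. $W$ has density $\theta>0$ on $D$, so the atomic condition forces $\mu=\delta_{P_0}$, either via Theorem~\ref{thm:rect} or directly, since stationarity gives $P_0^\perp\subseteq\operatorname{Ker}A_F(\mu)$. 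Then $\mathbf{F}(D)<\theta F(P_0)\mathcal{H}^d(D)=\mathbf{F}(W)=\mathbf{F}(S_i)=\mathbf{F}(S)\le\mathbf{F}(D)$. Second, the lower-density and rectifiability facts in your Step~1 belong only to the existence theory of the set minimizer. The contradiction is drawn from $W$, not from your $V$, and the cone/translation trick is not needed.
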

Almgren's strict ellipticity is a geometric version of the strict quasiconvexity. Roughly speaking, among all surfaces spanning the boundary of a flat $d$-cube, the flat cube is the unique $F$-minimizer (not only the unique area-minimizer). Theorem~\ref{almell} says the atomic condition implies that no other  minimizer exists for a flat boundary, even in a broader topological sense, while in the classical quasiconvexity only graphs of Lipschitz functions are allowed as competitors.
\begin{proof}[Proof idea.]
The proof relies on the following homogenization argument. If Almgren's strict ellipticity is not satisfied, up to solving a set-theoretic anisotropic Plateau problem, there exists a $d$-rectifiable competitor set $S$ which cannot be retracted onto $\partial D$ via Lipschitz deformations, with $\mathcal{H}^d(S) > \mathcal{H}^d(D)$, but $\mathbf{F}(S)\leq \mathbf{F}(D)$. For every $i\in \mathbb N$, we construct a tiling $S_i$ of $i^d$ copies of the $\frac{1}{i}$-rescaling of $S$ with boundaries contained in the $(d-1)$-skeleton of a grid covering $D$. We can prove that $\delta_{F} S_i = 0$ on $\R^n\setminus \partial D$, by showing via algebraic topology arguments that $S_i$ cannot be retracted onto $\partial D$ via Lipschitz deformations. It follows that as a varifold $S_i\rightharpoonup W:=\theta \mathcal H^d\restr D\otimes \mu$, where $\theta >1$ as $\mathcal{H}^d(S_i)=\mathcal{H}^d(S) > \mathcal{H}^d(D)$ is constant in $i$, and $\delta_{F} W = 0$ on $\R^n\setminus \partial D$. However $W$ is not rectifiable, as otherwise we would have $\mu=\delta_{P_0}$ with $P_0=\operatorname{span}(D)$, which yields the following contradiction:
$$\mathbf{F}(D)<\mathbf{F}(W)=\mathbf{F}(S_i)=\mathbf{F}(S)\leq \mathbf{F}(D).$$
Hence $F$ does not satisfy the atomic condition.
\end{proof}
The strategy of this proof is quite robust. In particular we can potentially prove that the atomic condition implies other ellipticity conditions, provided that violating these conditions allows one to build fillings of the flat boundary with better or equal $F$-area than the flat filling. 

Building on this approach, in collaboration with Lei and Young \cite{DRLY}, we proved that the atomic condition ensures the strict polyconvexity of the integrand when suitably extended to oriented $d$-planes. For this aim we will need some extra notation.
Let $\widetilde{G}(n,d)$ denote the oriented Grassmannian, and let $p: \widetilde{G}(n,d) \to G(n,d)$ denote the forgetful projection. For every $P\in \widetilde{G}(n,d)$ we denote by $\omega_P$ the unit simple $d$-vector associated to $P$. For a rectifiable $d$-current $\Sigma=(E, \theta, P)$, the Gaussian image is a Radon measure on $\widetilde{G}(n,d)$ defined as
$$\gamma_\Sigma:=P_\#(\theta \mathcal{H}^d \restr E).$$
\begin{theorem}[\cite{DRLY}]\label{thm:DRLY}
The atomic condition implies strict polyconvexity of the even extension $\tilde F$ of $F$ to $\widetilde{G}(n,d)$, that is for every $P_0\in \widetilde{G}(n,d)$ and every positive Radon measure $\mu$ on $\widetilde{G}(n,d)$ satisfying $\int \omega_P d \mu(P) = \omega_{P_0}$,
 \begin{equation}\label{poly}
 \int \tilde F(P) d\mu(P)\geq \tilde F(P_0)
 \end{equation}
    with equality if and only if $\mu=\delta_{P_0}$.
\end{theorem}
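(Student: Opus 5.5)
We argue by contradiction, along the lines of the homogenization strategy used in the proof of Theorem~\ref{almell}. Assume $F$ satisfies the atomic condition, but there exist $P_0\in\widetilde G(n,d)$ and a positive Radon measure $\mu\neq\delta_{P_0}$ on $\widetilde G(n,d)$ with $\int\omega_P\,d\mu(P)=\omega_{P_0}$ and $\int\tilde F\,d\mu\le \tilde F(P_0)$. We will turn $\mu$ into a non-rectifiable $F$-stationary varifold with positive lower density, which contradicts Theorem~\ref{thm:rect}.

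\textbf{Step 1: reduction to a discrete measure.} First we approximate $\mu$ by finite atomic measures $\sum_i a_i\delta_{P_i}$ with rational weights. We keep the constraint $\sum a_i\omega_{P_i}=\omega_{P_0}$ exactly, adjusting by a small multiple of $\delta_{P_0}$ and $\delta_{-P_0}$ if needed, and we keep the energy inequality up to an error $\epsilon$. It is more convenient to work directly with an $F$-minimizing problem: consider integral currents $T$ with $\partial T=\partial[\![D]\!]$, where $D$ is the unit $d$-cube in $P_0$. Since $\int\omega_P\,d\mu=\omega_{P_0}$ is exactly the condition for a measure to be realizable as the Gaussian image of a cycle-type competitor, the key geometric claim is the following.

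\textbf{Step 2 (main obstacle).} Given atomic $\mu$ with $\sum a_i\omega_{P_i}=\omega_{P_0}$, we must construct Lipschitz (or polyhedral) integral currents $\Sigma_k$ with $\partial\Sigma_k=\partial[\![D]\!]$ whose Gaussian images $\gamma_{\Sigma_k}$ converge to $\mu$ (scaled by $\mathcal H^d(D)$). This is the $Q$-valued / polyhedral realization step, and it is why Definition~\ref{qc} enters. A general $\omega_{P_0}$-combination of simple $d$-vectors is not a gradient of a single-valued map, so single-valued quasiconvexity does not suffice. My approach is to realize $\mu$ by a multi-sheeted polyhedral chain. For each pair $P_i$ we add a thin laminate structure: pieces of planes parallel to $P_i$, with multiplicity proportional to $a_i$, arranged periodically at scale $1/k$. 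The resulting boundary mismatch is then closed with small connecting pieces whose mass tends to $0$. The linear constraint on $d$-vectors is exactly what makes the boundary mismatch a cycle of small mass, which can be filled by the isoperimetric inequality with $o(1)$ mass. I expect this to be the hardest step, because it requires controlling the filling uniformly as $k\to\infty$.

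\textbf{Step 3: minimize and homogenize.} Let $S$ minimize $\mathbf F$ among integral currents spanning $\partial D$. Step 2 gives $\mathbf F(S)\le\mathbf F(\Sigma_k)\to\mathcal H^d(D)\int\tilde F\,d\mu\le\mathbf F(D)$. Two cases arise.
\begin{itemize}
\item If $S\ne D$, then since $D$ is the unique area-minimizer we get $\mathcal H^d(S)>\mathcal H^d(D)$. Tiling $D$ by $i^d$ rescaled copies of $S$ exactly as in the proof of Theorem~\ref{almell} produces $F$-stationary varifolds converging to $\theta\mathcal H^d\restr D\otimes\nu$ with $\theta>1$. The argument there shows that this limit is non-rectifiable, contradicting the atomic condition.
\item If $D$ itself is a minimizer, then $\int\tilde F\,d\mu=\tilde F(P_0)$ and $\mu\neq\delta_{P_0}$. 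In this case we apply the same tiling to the near-minimizers $\Sigma_k$, taking a diagonal sequence. Their varifolds have first variations tending to zero away from $\partial D$; this follows from minimality of $D$ together with an energy-gap argument. They converge to $\mathcal H^d\restr D\otimes p_\#\mu$ with $p_\#\mu\neq\delta_{p(P_0)}$ after normalization; the mass $\mu(\widetilde G)\ge1$ follows from the triangle inequality for mass norms.
\end{itemize}
This limit is $F$-stationary in the interior and non-rectifiable, which contradicts Theorem~\ref{thm:rect}. Strict inequality and the equality case then follow together.
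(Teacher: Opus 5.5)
\textbf{Step 2.} Your overall architecture matches the paper's. You realize $\mu$ by currents $\Sigma_k$ with $\partial\Sigma_k=\partial[D]$ and $\gamma_{\Sigma_k}\rightharpoonup^*\mu$, argue that they form a minimizing sequence, and conclude that the limit $\mathcal H^d\restr D\otimes p_\#\mu$ is $F$-stationary off $\partial D$, has positive density and is not rectifiable, contradicting Theorem~\ref{thm:rect}. Your second bullet in Step 3 is essentially the paper's argument. The genuine gap is Step 2, which carries all the content (it is Theorem~\ref{polapprox}), and the mechanism you propose fails.

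The mismatch left by a laminate is \emph{not} a cycle of small mass. If you put pieces of $P_i$-planes into the $k^d$ cells of side $1/k$ over $D$, each cell leaves a $(d-1)$-cycle of mass $\sim k^{1-d}$, so the total mismatch has mass $\sim k$. Cellwise isoperimetric fillings have mass $\sim k^{-d}$ each, i.e.\ total mass of order one with uncontrolled orientations, which destroys $\gamma_{\Sigma_k}\rightharpoonup^*\mu$. Rescaling a single cell to unit size shows that filling its mismatch with mass $o(k^{-d})$ is the original problem again, so this reduction is circular. If you instead use globally periodic sheets in a slab of thickness $\delta$ around $D$, the boundary left on the slab faces has mass of order $1/\delta$.

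The linear constraint gives smallness only in a homological sense, and the missing idea is to exploit exactly that. Take the $P_i$ of rational slope and add the sheet $\hat T_{-P_0}$. Then $\hat S=\sum_i m_i\hat T_i+\hat T_{-P_0}$ is a cycle in $\mathbb R^n/\mathbb Z^n$ annihilating all constant $d$-forms, hence $\hat S=\partial\hat Q$ because $H_d(\mathbb R^n/\mathbb Z^n;\mathbb R)\simeq\bigwedge^d\mathbb R^n$. The building block $\partial(\tilde Q\restr E)$ is a cycle whose uncontrolled part $\tilde Q\cap\partial E$ cancels between adjacent tiles. Tiling with cubes of size $1/N^2$ inside the slab $[0,1]^d\times[0,1/N]^{n-d}$ makes the surviving faces negligible. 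Deleting the $-P_0$ sheets and closing up with annuli in $(\partial[0,1]^d)\times[0,1/N]^{n-d}$ of mass $\lesssim 1/N$ then gives $\partial\Sigma_N=\partial[D]$ exactly. None of these ingredients is present in your sketch: the periodic filling on the torus, the cancellation across tiles, the two scales, or the removal of the $P_0$-sheets.

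\textbf{Step 3.} There is a second, independent gap in the first bullet, where you take an $\mathbf F$-minimizing integral current $S$ with $\partial S=\partial[D]$. Its existence cannot come from the direct method, since in that very situation $\mathbf F$ is not lower semicontinuous: tiling a competitor $T$ with $\mathbf F(T)<\mathbf F(D)$ at scale $1/i$ gives $T_i\rightharpoonup[D]$ with $\mathbf F(T_i)=\mathbf F(T)$.

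The paper avoids both this and the case distinction by choosing $\mu$ from the start as a minimizer of $\nu\mapsto\int\tilde F\,d\nu$ among positive measures with $\int\omega_P\,d\nu=\omega_{P_0}$. Such a minimizer exists because $\tilde F\ge\lambda>0$ bounds the masses, and it can be taken with $\mu\neq\delta_{P_0}$ as soon as strict polyconvexity fails. Since $T-[D]$ is a boundary whenever $\partial T=\partial[D]$, the Gaussian image of every such $T$ has barycenter $\omega_{P_0}$ (normalizing $\mathcal H^d(D)=1$). Hence $\mathbf F(T)\ge\int\tilde F\,d\mu$, and any sequence realizing $\mu$ is automatically minimizing. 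Your second-bullet argument then applies verbatim. Non-rectifiability follows because $p_\#\mu=c\,\delta_{p(P_0)}$ would force $\mu=a\delta_{P_0}+b\delta_{-P_0}$ with $a-b=1$ and $a+b\le 1$, i.e.\ $\mu=\delta_{P_0}$.

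A minor point in Step 1: adding multiples of $\delta_{\pm P_0}$ only corrects the component of the barycenter error along $\omega_{P_0}$. A general error has to be compensated by a small positive measure, which exists because unit simple $d$-vectors positively span $\bigwedge^d\mathbb R^n$.
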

We observe that Jakimiuk, Kolasiński and Leśniak have recently shown in \cite{JKL} that the converse implication of Theorem \ref{thm:DRLY} is false. Indeed, for all $d \geq 2$ and $n-d\geq 3$, they construct strictly polyconvex integrands, which fail the atomic condition.

We first justify the name polyconvexity, comparing \eqref{poly} with Definition \ref{def:poly}. For a matrix $X\in \mathbb R^{(n-d)\times d}$, let
  $$M(X):=  \begin{pmatrix} \operatorname{Id}_d\\ X \end{pmatrix}\in \mathbb R^{n\times d}$$
  and $\bigwedge M(X) = w_1(X)\wedge\dots\wedge w_d(X)$, where $w_i(X)$ are the columns of $M(X)$.
 The coordinates of $\bigwedge M(X)$ in the standard basis of $\bigwedge^d\mathbb{R}^n$ are the $d\times d$ minors of $M(X)$, which in turn coincide with the minors of $X$ of any order. Hence $f:\R^{(n-d)\times d}\to (0,\infty)$ is polyconvex in the sense of Definition \ref{def:poly} if and only if it is a convex function in the space of coordinates of $\bigwedge M(X)$ in the standard basis of $\bigwedge^d\mathbb{R}^n$. This is exactly \eqref{poly}.

\begin{proof}[Proof sketch.]
 Suppose $\tilde F$ is not strictly polyconvex. Then there are $P_0 \in \widetilde{G}(n,d)$ and a positive Radon measure $\mu$ on $\widetilde{G}(n,d)$ such that $\int \omega_P d \mu(P) = \omega_{P_0}$, $\mu\neq\delta_{P_0}$, and
  $$\int \tilde F(P) d \mu(P)=\inf_{\substack{\nu\in \mathcal{M}(\widetilde{G}(n,d))\\ \int \omega_P d \nu(P) = \omega_{P_0}}} \left\{\int \tilde F(P)d \nu(P) \right\}\leq \tilde F(P_0).$$
If we are able to construct a sequence of rectifiable currents $\{\Sigma_N\}_{N\in \mathbb{N}}$ such that
$$\partial \Sigma_N=\partial D, \qquad \gamma_{\Sigma_N} \rightharpoonup^* \mu,  \qquad  V_{\Sigma_N} \rightharpoonup^* W:= (\mathcal H^d\restr D)\otimes p_\#\mu,$$
which is guaranteed by Theorem~\ref{polapprox} below,
then $\{\Sigma_N\}_{N\in \mathbb{N}}$ is a minimizing sequence among rectifiable currents with boundary $\partial D$. Hence $W$ is stationary away from $\partial D$ and not rectifiable.
 We conclude $F$ does not satisfy the atomic condition.
\end{proof}
Hence we are left to show the following:
\begin{theorem}[\cite{DRLY}]\label{polapprox}
 Let $P_0 \in \widetilde{G}(n,d)$, $\mu$ be a positive Radon measure on $\widetilde{G}(n,d)$, $D=[0,1]^d\subset P_0$, and $E=[0,1]^n$.
 \begin{itemize}
\item If $\int \omega_P d \mu(P) = 0$, there exist polyhedral $d$-chains $\{\Sigma_N\}_{N\in \mathbb{N}}$ such that
 $$\partial \Sigma_N = 0, \; \, \gamma_{\Sigma_N} \rightharpoonup^*\mu, \;\, \supp \Sigma_N \xrightarrow{\mathrm{Haus}} E, \;\,  V_{\Sigma_N}\rightharpoonup (\mathcal{H}^n\restr E)\otimes p_\#\mu.$$
\item If $\int \omega_P d \mu(P) = \omega_{P_0}$, there exist polyhedral $d$-chains $\{\Sigma_N\}_{N\in \N}$ such that
  $$ \partial \Sigma_N =\partial [D], \; \, \gamma_{\Sigma_N} \rightharpoonup^*\mu, \;\,  \supp \Sigma_N \xrightarrow{\mathrm{Haus}} D, \;\,  V_{\Sigma_N}\rightharpoonup (\mathcal{H}^d\restr D)\otimes p_\#\mu.$$
  \end{itemize}
Moreover, if $\supp \mu \subset  \widetilde{G}^+(n,d)$ consists of positively oriented $d$-planes with respect to $P_0$, then we can construct $\Sigma_N$ as Lipschitz multigraphs over $P_0$.
\end{theorem}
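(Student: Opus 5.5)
The plan is to reduce first to finitely supported $\mu$. Then I build closed polyhedral cycles for the first case, and obtain the second case by gluing such cycles onto the flat cube $D$.

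\emph{Reduction to atomic $\mu$.} Measures of the form $\mu=\sum_{i=1}^m a_i\delta_{P_i}$ with $a_i>0$ are weak-$*$ dense among positive measures with a prescribed value of $\int\omega_P\,d\mu$. This uses a small correction: Carathéodory's theorem in $\bigwedge^d\mathbb R^n$, plus the observation that simple $d$-vectors span $\bigwedge^d\mathbb R^n$, so the barycenter error can be absorbed by adding a few atoms of small mass. A diagonal argument then lets us assume $\mu$ is finitely supported. If needed, we may also assume the $P_i$ are rational planes.

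\emph{Case $\int\omega_P\,d\mu=0$.} The key step is a local construction. For the given atoms $P_1,\dots,P_m$ with $\sum a_i\omega_{P_i}=0$, I want a closed polyhedral $d$-cycle $Z$ inside the unit cube whose Gaussian image is close to $\mu$, normalized to total mass $\mu(\widetilde G)$ up to scaling. The natural attempt is to take, for each $i$, a flat piece $\Delta_i\subset P_i$ of area $a_i$. Because $\sum a_i\omega_{P_i}=0$, the chain $\sum_i[\Delta_i]$ has zero ``total $d$-vector''. Its boundary, a $(d-1)$-cycle, need not vanish, but it has zero flux in the sense that its homological filling has vanishing $d$-vector. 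I would then fill $\partial\sum[\Delta_i]$ by a polyhedral chain $R$ of small mass. The point is to shrink the $\Delta_i$ into many tiny pieces arranged so that their boundaries nearly cancel, using a lattice arrangement in the spirit of a Federer–Fleming deformation onto the $(d-1)$-skeleton of a fine grid.

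The intended mechanism is this. Cut each $P_i$-slab into $\epsilon$-cubes and project onto the grid skeleton. After projection, the sum of the boundaries is a $(d-1)$-cycle in the $\epsilon$-grid whose mass per cube is controlled. Its filling is then forced, by the vanishing of the total $d$-vector and an isoperimetric/cone argument in each cell, to have mass $o(1)$ relative to the main part. Finally, tiling $N^n$ copies of the $1/N$-rescaled cycle $Z$ across $E$ gives $\Sigma_N$. This choice delivers $\partial\Sigma_N=0$, $\gamma_{\Sigma_N}\rightharpoonup^*\mu$ after normalizing the scaling by $N^{n-d}$ factors, Hausdorff convergence of the support to $E$, and varifold convergence to $(\mathcal H^n\restr E)\otimes p_\#\mu$ by equidistribution.

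\emph{Case $\int\omega_P\,d\mu=\omega_{P_0}$.} Write $\mu-\delta_{P_0}=\mu'$, understood as $\mu$ plus the atom $-P_0$, meaning $P_0$ with reversed orientation. Then $\mu'$ is positive with zero barycenter. Apply the first case in a $d$-dimensional tiling over $D$: place $N^d$ copies of the $1/N$-scaled cycle $Z'$ for $\mu'$ in thin cubes around $D$, of thickness going to $0$, and add $[D]$. The copies of $-P_0$ pieces and $[D]$ cancel, and boundary mismatches are of lower order. This gives $\partial\Sigma_N=\partial[D]$ and the claimed convergences.

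\emph{Multigraph refinement.} When all $P_i$ are positively oriented with respect to $P_0$, I would avoid back-folding. Instead, build $\Sigma_N$ as a laminate of $Q$ layers of Lipschitz graphs with piecewise-affine slopes given by the $P_i$, using rank-one/lamination constructions in each layer. Weights are realized via the multiplicities $Q_j$ as in Definition~\ref{qc}.

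\emph{Main obstacle.} I expect the main difficulty to be the local cycle construction in the zero-barycenter case. One must show that zero total $d$-vector, which is a linear condition in $\bigwedge^d$ and not just in $\mathbb R^n$, suffices to close up the pieces with asymptotically negligible extra mass. My first attempt would be induction on $d$ via slicing, combined with the Federer–Fleming deformation and the fact that a $(d-1)$-cycle with zero flux in a cube is a boundary of controlled mass.
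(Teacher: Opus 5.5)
\textbf{Zero-barycenter case.} Your reduction to finitely many atoms is fine, though rationality of the $P_i$ is not optional: it is the key to the construction. The real problem is the core of this case. The step you list as the main obstacle is not supplied, and the mechanism you propose cannot supply it.

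The condition $\sum_i a_i\omega_{P_i}=0$ is only necessary for a cheap filling; it gives no quantitative bound. Take a cell of side $\epsilon$ containing $k$ flat pieces of diameter $\sim\epsilon$. The boundary has mass $\sim k\epsilon^{d-1}$. The cone construction then gives fillings of mass $\lesssim k\epsilon^d$, and the isoperimetric inequality gives the worse bound $k^{d/(d-1)}\epsilon^d$. Either way the filling is comparable to the main part, never $o(1)$ relative to it.

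Zero total $d$-vector does not help. Already for $d=1$, $n=2$, the segments $[0,1]\times\{0\}$ and $-([0,1]\times\{1/2\})$ have zero total vector, yet every filling of their boundary has mass at least $1$, half the main part. Whether a filling is cheap depends on the arrangement. For $d\ge 2$ and non-parallel $P_i$ you give no arrangement that makes the $(d-1)$-dimensional boundaries nearly cancel.

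The order of operations is also circular. ``First build a closed $Z$ in the unit cube with $\gamma_Z\approx\mu$, then tile'' presupposes the conclusion, since such a $Z$ is essentially the theorem on the unit cube.

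The missing idea is the torus homology argument:
\begin{itemize}
\item Each rational $P_i$ projects to a closed subtorus $\hat\Sigma_i\subset\mathbb R^n/\mathbb Z^n$.
\item Under $H_d(\mathbb R^n/\mathbb Z^n;\mathbb R)\cong\bigwedge^d\mathbb R^n$, the cycle $\hat S=\sum_i m_i\hat T_i$ represents $\int\omega_P\,d\mu=0$. Hence $\hat S=\partial\hat Q$ for some $(d+1)$-chain $\hat Q$.
\item The block $S=\partial(\tilde Q\restr E)=\tilde S\restr E+\tilde Q\cap\partial E$ is closed, but its error term $\tilde Q\cap\partial E$ is \emph{not} small.
\item Smallness appears only after tiling. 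By periodicity these error terms cancel exactly on interior faces, and the surviving term on $\partial E$ has relative mass $O(1/N)$.
\end{itemize}
No cellwise filling estimate is ever needed.

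\textbf{Barycenter $\omega_{P_0}$ case.} The $-P_0$ pieces of your tiled copies of $Z'$ sit at heights in $(0,1/N)^{n-d}$, not on $D$, so they do not cancel with $[D]$ as currents. Since $\gamma$ lives on the \emph{oriented} Grassmannian, your $\Sigma_N$ satisfies $\gamma_{\Sigma_N}\rightharpoonup^*\mu+\delta_{P_0}+\delta_{-P_0}$. The varifold limit then carries an extra $2\,\mathcal H^d\restr D\otimes\delta_{p(P_0)}$. Deleting those pieces instead creates boundary inside every cell, and pushing them down onto $D$ costs mass comparable to their area.

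The paper avoids this because, in the periodic construction, the $-P_0$ part consists of full horizontal sheets crossing the whole slab $[0,1]^d\times[0,1/N]^{n-d}$. After deleting them, the boundary lies only on $(\partial[0,1]^d)\times[0,1/N]^{n-d}$, and it is fixed by annuli of mass $\lesssim 1/N$. The two-scale tiling (cells of size $1/N^2$ in a slab of thickness $1/N$) is what makes the top and bottom error terms negligible while the thickness still tends to $0$.

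\textbf{Multigraph refinement.} Superposing $Q$ single-valued Lipschitz graphs with flat boundary data, laminated or not, cannot work in general. For such $\Sigma_N$, Jensen's inequality against $\gamma_{\Sigma_N}$ holds for every integrand whose graphical version is quasiconvex. The quasiconvex but non-polyconvex integrands of Alibert--Dacorogna and \v{S}ver\'ak therefore produce admissible $\mu$ that are unreachable this way. The multigraphs must genuinely branch; in the paper they come out of the same torus construction after correcting the vertical and negatively oriented terms.
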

Burago and Ivanov \cite{BurIva} gave a nonconstructive proof of the second bullet point of Theorem~\ref{polapprox}. In \cite{DRLY} we prove Theorem~\ref{polapprox} by giving an explicit construction of such polyhedral $d$-chains. This proof is robust enough to generalize to other classes of surfaces. For instance, as stated in Theorem~\ref{polapprox}, in the case the tangent planes are assumed to be positively oriented $d$-planes, we can construct Lipschitz multigraphs.
\begin{proof}[Proof sketch.]
We start with the case $\int \omega_P d \mu(P) = 0$.
By a density argument, assume
$$\mu := \sum^{k}_{i=1} m_i \delta_{P_i},$$
 where $P_i$ are planes of rational slope, i.e., there exists $M_i\in M_{n-d,n}(\Z)$ of rank $n-d$ such that $P_i=\mbox{Ker } M_i$.

The linear map $M_i: \R^n\to \R^{n-d}$ descends to $\hat{M}_i: \R^n/\Z^n\to \R^{n-d}/\Z^{n-d}$, and we denote $\hat\Sigma_i=\hat{M}_i^{-1}(0)\subset \R^n/\Z^n$. We define the polyhedral chains in $\R^n/\Z^n$:
$$\hat T_i:=\frac{[\hat\Sigma_i]}{{\mathbb{M}}([\hat\Sigma_i])}, \qquad \hat S:=\sum_{i=1}^k m_i \hat T_{i}.$$
We have that $\partial \hat S=0$ and $\gamma_{\hat S} = \mu$. Also we observe that \(\widehat S\) is null-homologous: under the canonical identification
\[
H_d(\mathbb R^n/\mathbb Z^n;\mathbb R)\simeq \bigwedge\nolimits^d\mathbb R^n,
\]
the normalized current
\(\widehat T_i\) represents the unit simple \(d\)-vector \(\omega_{P_i}\). Indeed, for every constant \(d\)-form \(\alpha\) on \(\mathbb R^n/\mathbb Z^n\),
\[
\widehat T_i(\alpha)
=
\frac{1}{\mathbb M([\widehat\Sigma_i])}
\int_{\widehat\Sigma_i}
\langle\alpha,\omega_{P_i}\rangle d\mathcal H^d
=
\langle\alpha,\omega_{P_i}\rangle.
\]
It follows that
\[
\widehat S(\alpha)=
\sum_{i=1}^k m_i\widehat T_i(\alpha)=
\left\langle
\alpha,\sum_{i=1}^k m_i\omega_{P_i}
\right\rangle
=\left\langle
\alpha,\int\omega_P d\mu(P)
\right\rangle
=0.
\]
Since the de Rham cohomology of the torus is generated by constant forms, we deduce that
\(\widehat S=0\) in $H_d(\mathbb T^n;\mathbb R)$.
Hence there is a real $(d+1)$-polyhedral chain $\hat Q$ in $\R^n/\Z^n$ such that $\partial \hat Q = \hat S$.  Let $\tilde{Q}$, $\tilde{S}$ be the lifted polyhedral chains of $\hat Q$ and $\hat S$ to $\R^n$.
We consider a building block for the tiling defined as:
$$
S := \partial  (\tilde{Q}\restr E) = \partial \tilde{Q}\restr E + \tilde{Q}\cap \partial E = \tilde{S}\restr E + \tilde{Q}\cap \partial E.
$$
We observe that $\partial S = \partial^2(\tilde{Q}\restr E)=0$, and $\gamma_{\tilde{S}\restr E}=\mu$.
We construct $\Sigma_N$ by tiling $N^n$ rescaled copies of $S$ in subcubes of $[0,1]^n$ of size $1/N$, and we conclude the following desired properties as $N\to \infty$:
\begin{itemize}
\item $\partial \Sigma_N=0$.
\item $\gamma_{\Sigma_N}\rightharpoonup^* \mu$ since the terms arising from the tiling of $\tilde{Q}\cap \partial E$ (with no control on the Gaussian image) cancel out in $(0,1)^n$ and survive only on $\partial E$.
\item $\supp \Sigma_N \to E$ in Hausdorff distance by the tiling procedure.
\item $V_{\Sigma_N}\rightharpoonup (\mathcal{H}^n\restr E)\otimes p_\#\mu$ by homogenization.
\end{itemize}
The proof of the case $\int \omega_P d \mu(P) = \omega_{P_0}$ is obtained by replacing $\hat S$ in the previous construction with:
$$\hat S := \sum^k_{i=1}m_i \hat T_{i} + \hat  T_{-P_0}.$$
In this case we have to tile $(N^2)^d \times N^{n-d}$ rescaled copies of $S$ in subcubes of $[0,1]^d\times [0,1/N]^{n-d}$ of size $1/N^2$, and we remove all $d$-cubes generated by $\hat T_{-P_0}$ and correct the boundary with annuli contained in $(\partial [0,1]^d)\times [0,1/N]^{n-d}$ of negligible mass $\lesssim 1/N$.

To conclude, constructing Lipschitz multigraphs when $\supp \mu \subset  \widetilde{G}^+(n,d)$ requires correcting all vertical and negatively oriented error terms.
\end{proof}
Theorem~\ref{polapprox} provides the following surprising consequences for the convexity notions of $f: \mathbb{R}^{(n-d)\times d} \to \mathbb{R}$:
\begin{theorem}[\cite{DRLY}]\label{thm:int}
$f: \mathbb{R}^{(n-d)\times d} \to \mathbb{R}$ is polyconvex if and only if $f$ is quasiconvex for Lipschitz $Q$-valued functions for every $Q\in \mathbb N$.
\end{theorem}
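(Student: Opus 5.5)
The forward implication (polyconvex $\Rightarrow$ quasiconvex for $Q$-valued maps) follows from Jensen's inequality together with the null-Lagrangian property of minors for Lipschitz $Q$-valued functions. Write $f(X)=g(\bigwedge M(X))$ with $g$ convex. Fix an affine $u=\sum_j Q_j\delta_{a_j+L_jx}$ and competitors $\varphi^j$ with the prescribed boundary values. The graphs of the $\varphi^j$, with their natural orientation, form an integral current $\Sigma$ whose boundary coincides with that of the graph of $u$ over $\Omega$. This uses Almgren's push-forward construction of graphs of Lipschitz $Q$-valued maps, which gives $\partial \mathbf{G}_{\varphi}=\mathbf{G}_{\varphi|_{\partial\Omega}}$.

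Testing both currents against constant $d$-forms gives the identity
\[
\int_\Omega \sum_j\sum_{i} \bigwedge M(D\varphi^j_i)\,dx=\mathcal H^d(\Omega)\sum_j Q_j\bigwedge M(L_j).
\]
This is exactly the statement that the $Q$-valued minors are null Lagrangians. Jensen's inequality then applies to the probability measure $\frac{1}{Q\,\mathcal H^d(\Omega)}\sum_{j,i}\delta_{D\varphi^j_i(x)}\,dx$ on $\R^{(n-d)\times d}$, with barycenter of minors $\frac1Q\sum_jQ_j\bigwedge M(L_j)$. It yields $\frac1Q\int\bar f(D\varphi)\ge g\big(\frac1Q\sum_j Q_j\bigwedge M(L_j)\big)$. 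This lower bound is not yet $\frac1Q\bar f(Du)$ when $J>1$.

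To close the gap I would first reduce to $J=1$. The inequality is additive over the separate blocks $\varphi^j$, since each $\varphi^j$ has boundary $Q_j\delta_{a_j+L_jx}$. So it suffices to apply Jensen blockwise: for each $j$, the measure $\frac{1}{Q_j\mathcal H^d(\Omega)}\sum_i\delta_{D\varphi^j_i}dx$ has barycenter of minors $\bigwedge M(L_j)$. Jensen gives $\int \sum_i f(D\varphi^j_i)\ge Q_j\mathcal H^d(\Omega) f(L_j)$, and summing over $j$ gives the claim.

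For the converse, assume $f$ is quasiconvex for $Q$-valued maps for every $Q$, and suppose $f$ is not polyconvex. Then the polyconvex envelope is strictly below $f$ at some $L_0$. By Carathéodory in the space of minors, there are finitely many matrices $X_i$ and weights $m_i>0$ with $\sum m_i=1$, $\sum m_i\bigwedge M(X_i)=\bigwedge M(L_0)$, and $\sum m_i f(X_i)<f(L_0)$. Perturbing slightly and using continuity of $f$, I take the $m_i$ rational and the planes $P_i=\operatorname{graph}(X_i)$ of rational slope, while keeping the constraint exact; this density/perturbation step needs care but is standard.

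Now set $\mu=\sum_i m_i J_i^{-1}\delta_{P_i}$, where $J_i=|\bigwedge M(X_i)|$ and $P_i$ is positively oriented. Then $\int\omega_P\,d\mu=\omega_{P_0}\,J_0^{-1}$ after rescaling by $J_0=|\bigwedge M(L_0)|$. The last clause of Theorem~\ref{polapprox} applies because $\supp\mu\subset\widetilde G^+(n,d)$. It produces Lipschitz multigraphs $\Sigma_N$ over $P_0$ with $\partial\Sigma_N=\partial[D]$ and $\gamma_{\Sigma_N}\rightharpoonup^*\mu$. I apply it after an affine change of coordinates sending $\operatorname{graph}L_0$ to the horizontal plane, or equivalently working over $D\subset\R^d$ with graph of $L_0$.

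Each multigraph is the graph of a $Q_N$-valued Lipschitz map $\varphi_N$ equal to $Q_N$ copies of the affine map $L_0x$ on $\partial\Omega$. The energy $\int\bar f(D\varphi_N)$ equals $\int \tilde F\,d\gamma_{\Sigma_N}$ with $\tilde F(P)=f(X)/J(X)$ for $P=\operatorname{graph}X$. By the weak-$*$ convergence this tends to $Q_N\mathcal H^d(\Omega)\sum_i m_if(X_i)$, after normalizing the multiplicity, which is strictly less than $Q_N\mathcal H^d(\Omega)f(L_0)$. This contradicts $Q_N$-valued quasiconvexity.

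The main obstacle is this bookkeeping: the boundary must be exactly $Q_N$ times the affine graph, so the construction has to use multiplicity $Q_N$, taken to be the common denominator of the weights, with $\mu$ scaled accordingly. In addition, the error terms in the multigraph construction must have vanishing energy, so that weak-$*$ convergence of the Gaussian images, with $\tilde F$ continuous and bounded on the relevant compact set of slopes, passes to the limit.
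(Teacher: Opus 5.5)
Your proposal follows the same route as the paper. The forward implication comes from blockwise Jensen plus the $Q$-valued null-Lagrangian identity obtained from $\partial\mathbf G_{\varphi^j}=\mathbf G_{\varphi^j|_{\partial\Omega}}$. The converse feeds a finitely supported witness of non-polyconvexity into the multigraph clause of Theorem~\ref{polapprox} and contradicts $Q_N$-valued quasiconvexity. The argument works once two points in the converse are fixed; neither changes the strategy.

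First, the weights are wrong. Since $\omega_{P_i}=\bigwedge M(X_i)/J_i$, your measure $\sum_i m_iJ_i^{-1}\delta_{P_i}$ has barycenter $\sum_i m_iJ_i^{-2}\bigwedge M(X_i)$, which in general is not a multiple of $\omega_{P_0}$. The correct choice is $\mu=J_0^{-1}\sum_i m_iJ_i\,\delta_{P_i}$. It gives $\int\omega_P\,d\mu=\omega_{P_0}$ and $\int\tilde F\,d\mu=J_0^{-1}\sum_i m_if(X_i)<\tilde F(P_0)$, and this is in fact the value your energy limit already uses.

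Second, the change of coordinates must be the shear $(x,y)\mapsto(x,y-L_0x)$, not a rotation taking $\operatorname{graph}L_0$ to the horizontal plane. The shear acts by $\bigwedge M(X)\mapsto\bigwedge M(X-L_0)$, which has these consequences:
\begin{itemize}
\item it preserves the barycenter constraint;
\item it replaces $f$ by $f(\cdot+L_0)$ without affecting $Q$-valued quasiconvexity;
\item it sets $J_0=1$;
\item it turns every $P_i$ into a graph over $P_0=\mathbb R^d\times\{0\}$, hence a positively oriented plane, as the multigraph clause requires.
\end{itemize}
After a rotation, some $P_i$ with $X_i$ far from $L_0$ can have $\langle\omega_{P_i},\omega_{P_0}\rangle<0$. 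Moreover, a multigraph over $\operatorname{graph}L_0$ is not a competitor over $\Omega\subset\mathbb R^d$.

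Finally, the rational perturbation step is unnecessary. Theorem~\ref{polapprox} is stated for arbitrary positive $\mu$, the reduction to rational data happens inside its proof, and whatever $Q_N$ it produces is admissible because quasiconvexity is assumed for every $Q$. What you do need from the construction, as you already note, is a uniform Lipschitz bound on the multigraphs. This keeps your $\tilde F$ evaluated only on a fixed compact subset of $\widetilde G^+(n,d)$, which matters when $f$ grows superlinearly.
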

\begin{cor}[\cite{DRLY}]
  There exists an $f: \mathbb{R}^{(n-d)\times d} \to \mathbb{R}$ that is quasiconvex for Lipschitz $1$-valued functions but that is not quasiconvex for Lipschitz $Q$-valued functions for some $Q\in \mathbb N$.
\end{cor}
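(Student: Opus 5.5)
The corollary will follow by combining Theorem~\ref{thm:int} with a known example of an integrand that is quasiconvex but not polyconvex. Take $f:\mathbb{R}^{(n-d)\times d}\to\mathbb{R}$ quasiconvex (for single-valued Lipschitz perturbations) but not polyconvex. For $n=4$, $d=2$ the continuous integrands on $2\times 2$ matrices of Alibert and Dacorogna \cite{AD} or of Šverák \cite{Sve2} serve this purpose.

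Given such an $f$, apply Theorem~\ref{thm:int} in the contrapositive. Since $f$ is not polyconvex, $f$ cannot be quasiconvex for Lipschitz $Q$-valued functions for every $Q\in\mathbb N$, so there is some $Q\ge 2$ for which the inequality of Definition~\ref{qc} fails. It remains to check that ``quasiconvex for Lipschitz $1$-valued functions'' in Definition~\ref{qc} is the same as Morrey's quasiconvexity. With $Q=1$ we have $J=1$, $Q_1=1$ and $\bar f=f$, so the inequality in Definition~\ref{qc} reads
\[
f(L)\le \frac{1}{\mathcal H^d(\Omega)}\int_\Omega f(D\varphi)
\]
for every Lipschitz $\varphi$ with affine boundary data $a+Lx$. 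Writing $\varphi=a+Lx+\psi$, this is Morrey's inequality tested with Lipschitz $\psi$ vanishing on $\partial\Omega$ instead of smooth compactly supported $\psi$.

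The main point to verify is this passage from smooth to Lipschitz test functions. I would handle it by the standard density argument. Mollify $\psi$ after a slight shrinking, so that it becomes smooth with compact support, $D\psi_k\to D\psi$ a.e., and $\|D\psi_k\|_\infty$ stays bounded. Since $f$ is continuous, hence bounded on bounded sets, dominated convergence passes the inequality to the limit. Alternatively, one can note that the cited examples are quasiconvex in the $W^{1,\infty}$ sense directly. This yields an $f$ that is $1$-valued quasiconvex but not $Q$-valued quasiconvex for some $Q$. For general $n,d$ with $\min(d,n-d)\ge 2$, the same conclusion follows by embedding the $2\times 2$ example, for instance $f(X)=f_0(X_{2\times 2})+|X'|^2$, but the corollary only asserts existence, so the $2\times 2$ case suffices.
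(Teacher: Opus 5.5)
Your proposal is correct and follows the paper's approach: you combine Theorem~\ref{thm:int} (in contrapositive form) with the quasiconvex but non-polyconvex integrands of Alibert--Dacorogna and \v{S}ver\'ak \cite{AD,Sve2}, which is exactly how the paper obtains the corollary. Your extra checks are correct but are details the paper leaves implicit. These are that the case $Q=1$ of Definition~\ref{qc} reduces to Morrey's quasiconvexity via the smooth-to-Lipschitz density argument, and that the $2\times 2$ example embeds into any dimensions with $\min(d,n-d)\ge 2$.
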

We observe that the use of Lipschitz multigraphs in Theorem~\ref{polapprox} is essential: in general, they cannot be replaced by single-valued Lipschitz graphs. Indeed, suppose that the conclusion of Theorem~\ref{polapprox} held with single-valued graphs for every positive measure
\(\mu\) on \(\widetilde G^{+}(n,d)\) satisfying
\[
\int_{\widetilde G(n,d)}\omega_P d\mu(P)=\omega_{P_0}.
\]
Then the argument used to prove Theorem~\ref{thm:int}, applied with \(Q=1\), would imply that every quasiconvex integrand
\[
f:\mathbb R^{(n-d)\times d}\longrightarrow \mathbb R
\]
is polyconvex. This is false already for \(2\times2\) matrices: there exist continuous quasiconvex integrands which are not polyconvex, by the counterexamples of Alibert-Dacorogna and Šverák \cite{AD,Sve2}.

Actually, the approximation with Lipschitz multigraphs in Theorem~\ref{polapprox} is sharp, that is, every $Q \in \mathbb N$ is needed for the validity of the theorem. Indeed, in collaboration with De Gennaro \cite{DGDR},  for any fixed \(Q \in \mathbb{N}\) we construct a positive measure on $\widetilde{G}^+(4,2)$, whose barycenter is a simple \(2\)-vector, yet which cannot be approximated by weighted Gaussian images of Lipschitz \(Q\)-graphs. This construction extends to $\widetilde{G}^+(n,d)$ whenever \(n-2 \ge d\geq 2\).
As an application, we prove that for every $Q\geq 1$ and $p\ge 2$ there exists a non-polyconvex $Q$-integrand, in the sense introduced by De Lellis, Focardi and Spadaro \cite{DLFS}, whose associated energy is weakly lower semicontinuous in $W^{1,p}$. These $Q$-integrands suggest that the validity of Theorem~\ref{thm:int} probably also requires all $Q \in \mathbb N$. However, the latter remains an open question, see \cite[Remark~1.14]{DRLY}.
\subsubsection{Arbitrary codimension: sufficient conditions for the validity of the atomic condition}
In joint work with Tione \cite{DRT}, we introduced the following sufficient condition for the atomic condition:
\begin{defin}[\cite{DRT}]
Denote $F^*:G(n,n-d)\to (0,\infty)$, $F^*(S):=F(S^\perp)$.
\vspace{0.1cm}

$F$ satisfies the \emph{scalar atomic condition} if
\[
B_F(T) : B_{F^*}(S^\perp) > 0, \quad\forall T \neq S \in G(n,d).
\]
$F$ satisfies the \emph{uniform scalar atomic condition} if there exists $C>0$ such that
\[
B_F(T) : B_{F^*}(S^\perp) \ge C\|T-S\|^2, \quad\forall T,S \in G(n,d).
\]
\end{defin}
\begin{theorem}[\cite{DRT}]\label{thm:sacvsac}
If an integrand $F$ satisfies the scalar atomic condition, then it satisfies the atomic condition.
\end{theorem}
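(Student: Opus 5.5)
The plan is to test the matrix $A_F(\mu)$ against the matrices $B_{F^*}(S^\perp)$, choosing the $d$-plane $S$ adapted to the kernel of $A_F(\mu)$. The scalar atomic condition then forces $\mu$ to concentrate on $S$.

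\textbf{Step 1: structure of $B_F$ and $B_{F^*}$.} From the formula $B_F(T)=F(T)T+T^\perp dF(T)T$ we can factor
$$B_F(T)=M_T\,T, \qquad M_T:=F(T)\,\mathrm{Id}+T^\perp dF(T).$$
Here we identify $T$ with the orthogonal projection onto $T$. The same computation applied to $F^*$ on $G(n,n-d)$ gives
$$B_{F^*}(S^\perp)=N_S\,S^\perp, \qquad N_S:=F(S)\,\mathrm{Id}+S\,dF^*(S^\perp).$$
As a first consequence, the diagonal pairing vanishes. Using $S^\perp S=0$ and cyclicity of the trace,
$$B_F(S):B_{F^*}(S^\perp)=\operatorname{tr}\big(S\,M_S^T N_S\,S^\perp\big)=\operatorname{tr}\big(M_S^T N_S\,S^\perp S\big)=0.$$
Together with the scalar atomic condition, this shows that $T\mapsto B_F(T):B_{F^*}(S^\perp)$ is nonnegative on $G(n,d)$ and vanishes only at $T=S$.

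\textbf{Step 2: the key pairing identity.} Let $\mu$ be a probability measure on $G(n,d)$ with $\dim \mathrm{Ker}\,A_F(\mu)\ge n-d$. Choose any $S\in G(n,d)$ with $S^\perp\subset \mathrm{Ker}\,A_F(\mu)$; such an $S$ exists because the kernel has dimension at least $n-d$. Then $A_F(\mu)S^\perp=0$, so
$$\int_{G(n,d)} B_F(T):B_{F^*}(S^\perp)\,d\mu(T)=A_F(\mu):B_{F^*}(S^\perp)=\operatorname{tr}\big(A_F(\mu)^T N_S S^\perp\big)=\operatorname{tr}\big(S^\perp A_F(\mu)^T N_S\big)=0.$$
The last equality holds because $S^\perp A_F(\mu)^T=(A_F(\mu)S^\perp)^T=0$. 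The integrand is nonnegative and vanishes only at $T=S$ by Step 1, hence $\mu(G(n,d)\setminus\{S\})=0$, that is, $\mu=\delta_S$. Continuity of $F$ and $dF$, hence of the integrand, ensures there is no measurability issue.

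\textbf{Step 3: the two clauses of the atomic condition.} For clause (2), suppose $\dim\mathrm{Ker}\,A_F(\mu)=n-d$. Then Step 2 applies with $S:=(\mathrm{Ker}\,A_F(\mu))^\perp$ and gives $\mu=\delta_S$, as required. For clause (1), suppose instead $\dim\mathrm{Ker}\,A_F(\mu)>n-d$. Then the kernel contains two distinct $(n-d)$-dimensional subspaces $S_1^\perp\neq S_2^\perp$. Step 2 would give both $\mu=\delta_{S_1}$ and $\mu=\delta_{S_2}$ with $S_1\ne S_2$, which is impossible for a probability measure. Hence $\dim\mathrm{Ker}\,A_F(\mu)\le n-d$.

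The only delicate point is the algebra of Step 1. The factorizations $B_F(T)=M_T T$ and $B_{F^*}(S^\perp)=N_S S^\perp$, together with the convention that $dF$ is defined through an extension to a neighborhood of the Grassmannian, must be checked carefully. They are what make both the diagonal pairing and the pairing with $A_F(\mu)$ vanish. Once these identities are in place, the rest is a positivity argument. As a sanity check, for $F\equiv1$ the pairing is $T:S^\perp=\operatorname{tr}(TS^\perp)=|TS^\perp|^2$, which is positive for $T\ne S$; this recovers the isotropic case mentioned earlier.
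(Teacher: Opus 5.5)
Your proof is correct and follows the paper's own argument: you pair $A_F(\mu)$ with $B_{F^*}(S^\perp)$ for some $S^\perp\subset\mathrm{Ker}\,A_F(\mu)$, observe that the pairing vanishes, and conclude $\mu=\delta_S$ by positivity. The paper writes this out only for $F\equiv 1$, where the pairing is $\frac12\|T-S\|^2$. Your factorizations $B_F(T)=M_T T$ and $B_{F^*}(S^\perp)=N_S S^\perp$, the vanishing of the diagonal pairing, and the two-subspace argument for clause (1) supply the details that the paper's ``very similar'' general case leaves implicit.
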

\begin{proof}[Proof sketch.]
For simplicity, we give the proof for $F\equiv 1$, the general proof being very similar.
For $F\equiv 1$,
\[
B_F(T)=T \qquad  B_{F^*}(S^\perp)=S^\perp.
\]
Let $\mu \in \mathcal P (G(n,d))$ satisfy  $$\mbox{dim Ker } A_F(\mu)\geq n-d.$$ Choose
$$S^\perp\subset  \mbox{Ker } A_F(\mu), \qquad \mbox{dim } S^\perp = n-d.$$
Hence
$$
0=A_F(\mu): S^\perp=\int_{G(n,d)} T:S^\perp d\mu(T)=\frac12 \int_{G(n,d)}   \|T-S\|^2 d\mu(T).
$$
Then $T=S$ $\mu$-a.e., hence $F$ satisfies the atomic condition.
\end{proof}
The advantage of the scalar atomic condition is that it is easier to check for an integrand $F$, at least numerically, as it requires the positivity of a single function.

The uniform scalar atomic condition, which is a natural uniform quadratic strengthening of the scalar atomic condition, turned out to be useful in arbitrary codimension for both providing examples of non-trivial energies satisfying the atomic condition, and establishing $C^{1,\alpha}$-partial regularity for $F$-stationary Lipschitz graphs.
\begin{theorem}[\cite{DRT}]
The uniform scalar atomic condition is an open condition in the $C^2$ topology, i.e., if $F$ satisfies the uniform scalar atomic condition, there exists $\varepsilon(F,n,d)>0$ such that every $F'\in C^2(G(n,d); (0,\infty))$ satisfying
\[
\|F - F'\|_{C^2(G(n,d))} \le \varepsilon
\]
also satisfies the uniform scalar atomic condition.
\end{theorem}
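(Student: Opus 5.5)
The plan is to write $\Phi_F(T,S):=B_F(T):B_{F^*}(S^\perp)$ and study how it depends on $F$. Since $B_F(T)=F(T)T+T^\perp dF(T)T$ involves only $F$ and its first derivative, and $F^*$ is $F$ composed with the isometry $S\mapsto S^\perp$, the map $F\mapsto \Phi_F$ is bilinear-quadratic in $(F,DF)$. Two facts about $\Phi$ are needed.

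First, $\Phi_F(S,S)=0$ for every $F$ and every $S\in G(n,d)$. Indeed,
\[
B_{F^*}(S^\perp)=F(S)S^\perp+S\,dF^*(S^\perp)S^\perp ,
\]
and the product with $B_F(S)=F(S)S+S^\perp dF(S)S$ is a sum of traces of products of the form $S\cdots S^\perp$ composed with $S^\perp\cdots S$. For instance, $\operatorname{tr}(S\,S^\perp)=0$. The cross terms reduce to $\operatorname{tr}(S\,dF(S)\,S^\perp)$ and $\operatorname{tr}(S\,dF^*(S^\perp)\,S^\perp)$, which vanish by cyclicity because $S^\perp S=0$. I will verify this directly.

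Second, $\Phi$ vanishes to second order along the diagonal. For fixed $S$, the function $T\mapsto\Phi_F(T,S)$ attains a minimum at $T=S$ whenever $F$ satisfies the uniform scalar atomic condition. The key point, however, is that the first derivative in $T$ at $T=S$ vanishes for \emph{every} $C^2$ integrand $F$. This should come from the structure of $B_F$ as the first variation of $\mathbf F$, or can be checked by differentiating along curves $T_t=e^{tA}Se^{-tA}$ with $A$ antisymmetric. Since the differential of $B_F$ involves $D^2F$, this is where the $C^2$ norm enters.

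With these facts, I take $F'$ with $\|F-F'\|_{C^2}\le\varepsilon$ and set $\Psi:=\Phi_{F'}-\Phi_F$. The function $\Psi$ depends on $(F,F')$ in a way that is Lipschitz in $C^1$, and its second derivatives in $T$ are controlled by the $C^2$ norms. From $\Psi(S,S)=0$ and $\partial_T\Psi(S,S)=0$, a Taylor expansion in $T$ around $S$ on the compact smooth manifold $G(n,d)$ gives
\[
|\Psi(T,S)|\le C(n,d)\,\big(1+\|F\|_{C^2}\big)\,\|F-F'\|_{C^2}\,\|T-S\|^2 ,
\]
with a constant uniform in $S$. For pairs with $\|T-S\|$ bounded below, the elementary bound $|\Psi|\le C\|F-F'\|_{C^1}$ suffices, since $\|T-S\|^2$ is then comparable to $1$. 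Hence
\[
\Phi_{F'}(T,S)\ge (C_F-C'\varepsilon)\,\|T-S\|^2 ,
\]
and choosing $\varepsilon=C_F/(2C')$ gives the uniform scalar atomic condition for $F'$ with constant $C_F/2$. Positivity of $F'$ is automatic for small $\varepsilon$ because of the bounds \eqref{ell}.

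The main obstacle is the claim that $\partial_T\Phi_G(T,S)|_{T=S}=0$ for every integrand $G$. I will first try to prove it by direct computation using the curves $T_t=e^{tA}Se^{-tA}$, noting that $\dot T$ maps $S\to S^\perp$ and $S^\perp\to S$. In that computation, the terms surviving $\operatorname{tr}(\dot T\,\cdot)$ against the block structure of $B_{G^*}(S^\perp)$ should cancel: the contribution $G(S)\operatorname{tr}(\dot T S^\perp)$ should cancel against the contribution from $S\,dG^*\,S^\perp$. Here I expect to need the relation $dG^*(S^\perp)$ versus $dG(S)$ under $S\mapsto S^\perp$, i.e.\ $D(G\circ\perp)$ expressed through the chain rule with $P\mapsto I-P$.

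If the first-order cancellation fails, I will use the fallback: apply the uniform condition for $F$ itself to see that $T\mapsto\Phi_F(T,S)$ has vanishing derivative at $S$, since it is minimized there with value zero. Then apply this identity to the linear path $F_s=F+s(F'-F)$, as long as $\Phi$ depends affinely on the relevant derivative terms. Since $\Phi$ is in fact quadratic in $F$, I may need to apply the statement at the fixed point $F$ together with a bilinear estimate instead.
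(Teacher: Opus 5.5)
The notes only quote this result from \cite{DRT} and give no proof, so I assess your argument on its own. Your strategy is the right one, and the step you single out as the main obstacle is true for every integrand. Since $dF^*(S^\perp)=-dF(S)$, one has $B_{F^*}(S^\perp)=F(S)S^\perp-S\,dF(S)\,S^\perp$. Take a curve through $S$ with $\dot T=S\dot TS^\perp+S^\perp\dot TS$. The term $F(T)F(S)\operatorname{tr}(\dot TS^\perp)$ vanishes on its own, because $S^\perp\dot TS^\perp=0$. The actual cancellation is between two other terms:
\begin{itemize}
\item $-F(S)\operatorname{tr}(S^\perp\dot TS\,dF(S))$, from $F(T)T$ paired with $-S\,dF(S)\,S^\perp$;
\item $+F(S)\operatorname{tr}(S^\perp\dot TS\,dF(S))$, from $T^\perp dF(T)T$ paired with $F(S)S^\perp$.
\end{itemize}
So your fallback is unnecessary. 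It would not work anyway, since minimality of $\Phi_F(\cdot,S)$ at $S$ says nothing about $\Phi_{F'}$.

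\textbf{The gap is in the remainder estimate.}
\begin{itemize}
\item $\Phi_F(T,S)$ contains $dF(T)$. So for $F,F'\in C^2$ the map $T\mapsto\Psi(T,S)$ is only $C^1$. Its second $T$-derivatives involve $D^3F$ and $D^3F'$, which need not exist and are not controlled by $\|F-F'\|_{C^2}$. Already for $n=2$, $d=1$, $F=f(\theta)$, $\Phi_F$ contains $(f'(\theta)f(\sigma)-f(\theta)f'(\sigma))\sin(\theta-\sigma)\cos(\theta-\sigma)$, whose second $\theta$-derivative involves $f'''$.
\item From $\Psi(S,S)=0$, $\partial_T\Psi(S,S)=0$ and $|\partial_T\Psi|\lesssim\varepsilon$ you only get $|\Psi|\lesssim\varepsilon\|T-S\|$. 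That is useless near the diagonal.
\end{itemize}

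The missing point is that $D^2F$ enters only through $dF(T)-dF(S)$, multiplied by a block of size $O(\|T-S\|)$. Set $E:=T-S$ and use
\[
S^\perp T=S^\perp E,\quad T^\perp S=-ES,\quad T^\perp S^\perp T=S^\perp ES+S^\perp ES^\perp-ES^\perp E,\quad S^\perp ES^\perp=(S^\perp E)(ES^\perp),\quad \operatorname{tr}(TS^\perp)=\tfrac12\|E\|^2 .
\]
These give the exact formula
\[
\Phi_F(T,S)=F(S)\operatorname{tr}\big(S^\perp ES\,(dF(T)-dF(S))\big)+\big(F(S)-F(T)\big)\operatorname{tr}\big(S^\perp ES\,dF(S)\big)+R_F(T,S),
\]
where
\[
R_F=\tfrac12F(T)F(S)\|E\|^2+F(S)\operatorname{tr}\big((S^\perp ES^\perp-ES^\perp E)\,dF(T)\big)+\operatorname{tr}\big(S^\perp E\,dF(T)\,ES\,dF(S)\big).
\]
Every term is a product of two factors linear in $F$ times matrix factors, and is bounded by $C\|F\|_{C^2}^2\|E\|^2$. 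This uses $\|dF(T)-dF(S)\|\le C\|F\|_{C^2}\|E\|$ for a $C^2$-controlled extension of $F$. Writing $ab-a'b'=(a-a')b+a'(b-b')$ term by term then gives
\[
|\Phi_{F'}(T,S)-\Phi_F(T,S)|\le C(n,d)\big(\|F\|_{C^2}+\|F'\|_{C^2}\big)\|F-F'\|_{C^2}\|T-S\|^2
\]
for all $T,S$. This closes the argument, and it also removes the need to treat pairs with $\|T-S\|$ bounded below separately.
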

In particular, since $F\equiv 1$ satisfies the uniform scalar atomic condition, a $C^2$-neighborhood of the area integrand satisfies the atomic condition. These are the first non-trivial examples in arbitrary codimension.
\begin{theorem}[\cite{DRT}]
Let $F\in C^2$ satisfy the uniform scalar atomic condition, and $\Omega\subset \R^d$ open, bounded. Let $u \in \Lip(\Omega;\R^{n-d})$ be such that its graph has $F$-mean curvature $H$ in $L^p$, $p>d$. Then there exists $\alpha>0$ such that
\[
u \in C^{1,\alpha}(\Omega_0,\R^{n-d}), \quad \mbox{with } \Omega_0\subset \Omega \mbox{ open}, \quad \H^d(\Omega_0)= \H^d(\Omega).
\]
\end{theorem}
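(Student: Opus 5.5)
The plan is an $\varepsilon$-regularity argument of Allard/De Giorgi type, run for Lipschitz graphs. The monotonicity formula is replaced by a Caccioppoli inequality coming from the uniform scalar atomic condition. Write $\Gamma_u$ for the graph of $u$ and $V=\mathcal H^d\restr\Gamma_u\otimes\delta_{T_x\Gamma_u}$. By Rademacher, $u$ is differentiable a.e. By the Lebesgue differentiation theorem applied to $Du\in L^\infty$, at a.e. $x_0\in\Omega$ the excess
\[
E(x_0,r):=\fint_{B_r(x_0)}|Du-(Du)_{x_0,r}|^2
\]
tends to $0$ as $r\to 0$. So it suffices to prove the following decay statement. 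There exist $\varepsilon_0,\tau\in(0,1)$ and $\alpha>0$, depending on $F,n,d,p,\Lip u$, such that
\[
E(x,r)+r^{2(1-d/p)}\|H\|^2_{L^p}\le\varepsilon_0
\quad\Longrightarrow\quad
E(x,\tau r)\le \tau^{2\alpha}\big(E(x,r)+r^{2(1-d/p)}\|H\|_{L^p}^2\big).
\]
Since the hypothesis is open in $x$, iterating this and applying Campanato's criterion gives $u\in C^{1,\alpha}$ on an open set $\Omega_0$. This $\Omega_0$ contains every such $x_0$, so $\H^d(\Omega\setminus\Omega_0)=0$.

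\textbf{Step 1: Caccioppoli inequality.} Fix a ball and the affine map $\ell(y)=a+Ly$ with $L=(Du)_{x,r}$. Let $T_L$ be its graph plane and $S$ the plane $T_L$, viewed through $S^\perp$. I would test the first variation $\delta_F V(g)=-\int\langle H,g\rangle\,d\|V\|$ with the vertical field $g(y,z)=\eta^2(y)\,(0,u(y)-\ell(y))$, where $\eta$ is a cutoff. The principal term is
\[
\int\eta^2\,B_F(T_x\Gamma_u):Dg.
\]
Here I would compare $B_F(T_x\Gamma_u)$ with $B_F(T_L)$, whose contribution integrates to zero against an exact gradient. I would then pair the remainder with $B_{F^*}(S^\perp)$, viewed as the "normal" part of $Dg$. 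The uniform scalar atomic condition $B_F(T):B_{F^*}(S^\perp)\ge C\|T-S\|^2$ is exactly the coercivity that bounds $\int\eta^2\|T_x\Gamma_u-T_L\|^2\gtrsim\int\eta^2|Du-L|^2$ from below; the Lipschitz bound makes $\|T-S\|\simeq|Du-L|$. The remaining terms are bounded by
\[
r^{-2}\int|u-\ell|^2+\int|H||u-\ell|.
\]
Hölder and $p>d$ give the scaling factor $r^{2(1-d/p)}$ for the $H$ term.

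\textbf{Step 2: linearization and compactness.} I would argue by contradiction with a blow-up sequence $u_k$ whose excess $E_k\to 0$ but whose excess does not decay. The normalized maps $v_k=(u_k-\ell_k)/\sqrt{E_k}$ are bounded in $W^{1,2}$. Step 1, together with the compactness of $W^{1,2}$ in $L^2$, shows that $v_k\to v$ strongly in $W^{1,2}_{loc}$. Expanding the Euler–Lagrange equation $\delta_F V_k=-H_k$ to first order around $L_k\to L_\infty$, the limit $v$ solves a constant-coefficient linear system
\[
\operatorname{div}(\mathbb A\,Dv)=0,
\]
where $\mathbb A=D^2f(L_\infty)$ and $f$ is the graphical integrand. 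By the $C^2$ regularity of $F$, the remainders are $o(1)$. The uniform scalar atomic condition, applied with $T$ close to $S=T_{L_\infty}$, gives the Legendre–Hadamard condition (in fact a stronger integrated Gårding form) for $\mathbb A$. Hence $v$ is smooth, and its excess decays like $\tau^2$. This contradicts the assumption for small $\tau$, which proves the decay lemma.

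\textbf{Main obstacle.} The hard step is Step 1. The stress matrix $B_F$ is nonlinear in $T$, and this is not the isotropic case where $B_F(T)=T$. I would have to check carefully that the pairing produced by the vertical test field is really of the form $B_F(T):B_{F^*}(S^\perp)$ plus errors that are quadratic in the excess with small constants. Concretely, I would write $Dg=\eta^2(Du-L)\oplus\ldots$ in the splitting $S\oplus S^\perp$, and then use $B_F(S):B_{F^*}(S^\perp)=0$ together with a Taylor expansion in $T-S$. If this pairing does not close directly, my fallback is to prove only the weaker Gårding-type inequality for the linearized operator. I would then derive Caccioppoli by a difference-quotient comparison with the linearized system, in the spirit of partial regularity for quasiconvex functionals (Evans).
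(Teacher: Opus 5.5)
Your overall architecture is the same as the paper's: a Caccioppoli inequality, a blow-up to a constant-coefficient Legendre--Hadamard system, excess decay at Lebesgue points of $Du$, then Campanato. The gap is Step 1, which is the heart of the proof, and it does not work as proposed.

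\textbf{Why the vertical field fails.} The field $g=\eta^2(0,u-\ell)$ only tests the \emph{outer} variation of the graph. Write $f(X)=F(T_X)\sqrt{\det(I+X^TX)}$ for the graphical integrand. Along vertical fields the first variation is $\int_\Omega Df(Du):D\psi$. After subtracting $\int_\Omega Df(L):D\psi=0$, your principal term is $\int\eta^2(Df(Du)-Df(L)):(Du-L)$.
\begin{itemize}
\item Coercivity of this term means strong monotonicity of $Df$, i.e.\ uniform convexity of $f$ on $\{|X|\le \Lip u\}$.
\item The uniform scalar atomic condition does not give this. Your $Dg$ has principal part $\eta^2\left(\begin{smallmatrix}0&0\\ Du-L&0\end{smallmatrix}\right)$, which is not $B_{F^*}(S^\perp)$ and cannot be ``viewed'' as such.
\item Concretely, $F\equiv 1$ satisfies the condition. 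Yet for $d=n-d=2$ the area integrand $f(X)=\sqrt{1+|X|^2+(\det X)^2}$ satisfies $\frac{d^2}{ds^2}\big|_{s=0} f(tI+s\,\mathrm{diag}(1,-1))=\frac{2(1-t^2)}{1+t^2}<0$ for $t>1$. So the pointwise coercivity you need fails as soon as $|L|>1$.
\item You cannot rotate to reduce to $L=0$, because a Lipschitz graph need not be a graph over a tilted plane.
\end{itemize}
Your fallback does not repair this. Evans-type comparison arguments rely on minimality. An argument using only the outer Euler--Lagrange system plus a G\aa rding inequality for the linearization would apply verbatim to strongly quasiconvex or polyconvex integrands. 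For those, Lipschitz critical points can be nowhere $C^1$ (M\"uller--\v{S}ver\'ak, Sz\'ekelyhidi).

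\textbf{The missing idea.} Use the full ambient first variation, including horizontal (inner) variations, with the paper's field $g(y)=\varphi^2(y)\,B_{F^*}(S^\perp)(y-y_0)$, where $S=T_L$.
\begin{itemize}
\item Then $Dg=\varphi^2 B_{F^*}(S^\perp)+2\varphi\,\big(B_{F^*}(S^\perp)(y-y_0)\big)\otimes\nabla\varphi$. The principal term is exactly $\int\varphi^2 B_F(T):B_{F^*}(S^\perp)\,d\|\Gamma\|\ge C\int\varphi^2\|T-S\|^2\,d\|\Gamma\|$.
\item For the cross term, $B_{F^*}(S^\perp)=B_{F^*}(S^\perp)S^\perp$ gives $|B_{F^*}(S^\perp)(y-y_0)|\lesssim \dist(y-y_0,S)$.
\item Since $dF^*(S^\perp)=-dF(S)$, one checks $B_{F^*}(S^\perp)^T B_F(S)=0$. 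So $B_F(T)$ can be replaced by $B_F(T)-B_F(S)$ in the cross term, which is then $\lesssim\varphi|\nabla\varphi|\,\|T-S\|\,\dist(y-y_0,S)$ and is absorbed by Young's inequality.
\end{itemize}
This gives the tilt-versus-height Caccioppoli inequality for the varifold. The Lipschitz bound turns it into the graphical inequality on $B^d(x_0,kr)$.

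This field is not vertical. Its component $-S\,dF(S)S^\perp(y-y_0)$ along $S$ is exactly what produces the atomic-condition pairing, and it needs no change of coordinates. With this Step 1, your Step 2 and the iteration go through essentially as in the paper. Strong $W^{1,2}_{\rm loc}$ convergence of $v_k$ is not needed there: strong $L^2$ convergence plus the Caccioppoli inequality at scale $\tau r$ suffices.
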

In particular, regularity holds in a sufficiently small $C^2$ neighborhood of the area functional.
We remark that in collaboration with Resende \cite{DRR}, we extended the interior regularity theorem above to boundary points with density ratio less than $1/2+\varepsilon$.
\begin{proof}[Proof sketch.]
In the first variation of the graph $\Gamma$ of $u$:
\begin{equation*}
\int B_F(T_y\Gamma):Dg(y) d\|\Gamma\|(y) = -\int \langle H(y),g(y)\rangle d\|\Gamma\|(y),
\end{equation*}
we plug the vector field
\[
g(y):= \varphi^2(y) B_{F^*}(S^\perp) y, \qquad \varphi \mbox{ is a cutoff function supported in $B(0,2r)$ and equal to one on $B(0,r)$,}
\]
to get a Caccioppoli inequality
\begin{equation*}
\fint_{B(0,r)}\|T_y\Gamma - S\|^2\,d\|\Gamma\|(y) \le \frac{C_2}{r^2}\fint_{B(0,2r)}\dist(y,S)^2\,d\|\Gamma\|(y) + r^2C_2\fint_{B(0,2r)}\|H(y)\|^2\,d\|\Gamma\|(y).
\end{equation*}
This reads as the following Caccioppoli inequality for $u$. There exist constants $C(n,d,F,\|u\|_{\Lip}) > 0$ and $k(\|u\|_{\Lip})>0$ such that
\begin{equation*}
\begin{split}
\fint_{B^d(x_0,r)}\|Du(x) - A\|^2\,dx \le &\frac{C}{r^2}\fint_{B^d(x_0,kr)}\|u(x)-(u)_{x_0,kr} - A(x-x_0)\|^2\,dx\\
& + r^2C\fint_{B^d(x_0,kr)}\|H(x,u(x))\|^2\,dx
\end{split}
\end{equation*}
for all $A \in \R^{(n-d)\times d}$, $x_0 \in \Omega$, and $r > 0$.
A contradiction blow-up argument and the regularity theory for the linearized problem, combined with the Caccioppoli inequality, provide decay for the excess:
$$E(x,r) := \fint_{B^d(x,r)}\|Du(y) - (Du)_{x,r}\|^2 dy.$$
In particular, setting $H'(x):=H(x,u(x))$, there exists $c > 0$ such that for every $\tau \in \left(0,\frac{1}{4k}\right)$, there exists $\varepsilon = \varepsilon(\tau)>0$ such that the following bounds
\[
E(x,r) \le \varepsilon(\tau) \quad \text{ and } \quad r^{1-\frac dp}\|H'\|_{L^p(\Omega)} \le E(x,r)
\]
imply
$$
E(x,\tau r) \le c\tau^2E(x,r).
$$
We deduce that at a Lebesgue point $x$ of $Du$, we have $E(x,r)\leq cr^{2\alpha}$. This yields H\"older continuity of $Du$ near $x$.
\end{proof}
This regularity theorem for graphs cannot be directly extended to stationary
varifolds around flat points of density one, due to the lack of a lower density
bound. An anisotropic analogue of the Michael-Simon inequality \cite{MS} would
be decisive in this direction, but it is currently available only for $d=2$,
$n=3$, and for anisotropic integrands that are $C^1$-close to the area
integrand, as proved by De Philippis and Pigati \cite{DPP}. In codimension one,
the $C^{1,\alpha}$-regularity for varifolds is better understood \cite{All4,KS}.
We conclude this section by referring the reader to the recent work \cite{JKL}
for a reinterpretation of the atomic condition in terms of convex geometry and
for quantitative versions of the atomic condition.
\section{Min-max theory for anisotropic minimal hypersurfaces}\label{ch:mm}
\subsection{Introduction and main results}
 In this third lecture, using a min-max variational method we construct closed anisotropic minimal hypersurfaces in closed Riemannian manifolds, with optimal regularity properties. By optimal regularity, we mean that the size of the singular set of the constructed hypersurface matches what is known for solutions of the anisotropic Plateau problem in codimension one \cite{SSA}. In particular, our work in collaboration with De Philippis and Li \cite{DPDR,DPDRL} ensures that the $\mathcal{H}^{n-3}$-measure of the singular set is zero, where $n$ denotes throughout this section the dimension of the ambient manifold.
 
There is a vast literature about the min-max theory in the isotropic setting, $F\equiv 1$. Without attempting to give a full literature review of the isotropic min-max theory, we just recall that already Birkhoff \cite{Birk} in 1917 proved the existence of closed geodesics in manifolds that are homeomorphic to the sphere. However, for higher dimensions we had to wait until 1981, with the work of Almgren and Pitts \cite{Alm2,Pitts}, who proved the existence of closed smooth minimal hypersurfaces in closed Riemannian manifolds, for $3\leq n\leq 6$. This result was one of the main motivations that led Almgren to introduce the theory of varifolds \cite{Alm1}. Also, it relied on the curvature estimates for stable hypersurfaces proved by Schoen, Simon and Yau \cite{SSY}. Schoen and Simon \cite{SS} then extended the existence of closed minimal hypersurfaces with a singular set of Hausdorff dimension at most $n-8$, for every $n\geq 3$.

Allard in 1983~\cite[page 288]{All2} conjectured the validity of a similar construction in the anisotropic setting. However, he observed that ``there remains a considerable amount of work to do before this becomes feasible for general integrands''. The aim of this last section is to address this conjecture.
For the remainder of the mini-course,  we fix an $n$-dimensional smooth Riemannian manifold $M$ without boundary and we denote by
 $$G_{n-1}(M):=\{(x,S):x\in M,\ S\in G(T_xM,n-1)\}.$$
 The anisotropic integrands that we consider from now on are functions
 $$F \in C^3(G_{n-1}(M); (0,\infty))$$
 that are uniformly convex in the second variable, in the sense explained in Section \ref{cod1conv}. In particular these functions $F$ satisfy the atomic condition.
The results presented here are based on the two works \cite{DPDR,DPDRL}. In the first paper \cite{DPDR}, in collaboration with De Philippis, we handled the $3$-dimensional case ($n=3$) and proved the existence of a closed anisotropic minimal surface with \emph{at most one singular point}:
\begin{theorem}[\cite{DPDR}]\label{1}
If $n=3$, there is a nontrivial anisotropic minimal surface $\Sigma\subset M$ without boundary and whose singular set contains at most one point $p\in M$, i.e., $\Sigma$ is $C^2$ embedded away from $p$.
\end{theorem}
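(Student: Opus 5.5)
The plan is to follow the Almgren--Pitts framework, adapting each step to the anisotropic energy $\mathbf F$ and replacing the monotonicity formula wherever it is used. First I set up a one-parameter min-max problem. I take a sweepout $\{\Sigma_t\}_{t\in[0,1]}$ of $M$, for instance level sets of a Morse function, viewed as boundaries of Caccioppoli sets $\partial E_t$. I consider its saturation under isotopies and the width
\[
W_F:=\inf_{\{\Sigma_t\}}\sup_t \mathbf F(\Sigma_t).
\]
By \eqref{ell} and the isoperimetric inequality, $W_F>0$. Next I run a pull-tight procedure in the space of varifolds, using the anisotropic first variation $\delta_F$, to obtain a min-max sequence $\Sigma_{t_j}$ in which every varifold limit $V$ is $F$-stationary with $\mathbf F(V)=W_F$. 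Since $F$ is uniformly convex, Section \ref{cod1conv} shows that $F$ satisfies the atomic condition. Theorem \ref{thm:rect} then makes $V$ rectifiable on the set where its lower density is positive. Positivity of the lower density is not automatic without monotonicity, and I would obtain it from almost-minimality below.

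The second step is a combinatorial almost-minimizing property, adapting Almgren--Pitts and Pitts' annuli argument. For each point $x$, at least one of any two disjoint annuli centered at $x$ in a suitable family is an annulus in which the sequence is almost minimizing: deformations of small $\mathbf F$-increase cannot decrease $\mathbf F$ by a fixed amount. Otherwise one can deform the sweepout in both annuli and lower $\sup_t\mathbf F$ below $W_F$. Because there is only one parameter, I expect to show that a single point $p$ may fail to have arbitrarily small almost-minimizing annuli. Every $x\neq p$ then has a small ball around it in which the sequence is almost minimizing. This is the source of the one exceptional point.

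The third step, which I expect to be the main obstacle, is regularity at $x\neq p$. In an almost-minimizing annulus or ball I solve constrained anisotropic Plateau problems for sets of finite perimeter, as described in the introduction. This produces replacements $V^*$ that are $\mathbf F$-minimizing in the ball, hence $C^{1,\alpha}$ and smooth by Almgren--Schoen--Simon \cite{SSA}: the singular set has zero $\mathcal H^{0}$-measure when $n=3$, so it is empty. Lower density bounds come from the minimizing property, which lets me compare with cones with constants $\Lambda/\lambda$, and this is enough for rectifiability. The difficulty is gluing the replacement to $V$ and iterating replacements on concentric annuli without monotonicity: one needs unique continuation for $F$-stationary surfaces and the anisotropic Schoen--Simon curvature estimates for stable hypersurfaces to get compactness of the replacements. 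I would first try to prove that two successive replacements agree on overlaps. I would use the stability inequality, the smoothness of the replacements, and the maximum principle for the quasilinear elliptic $F$-mean curvature equation. This would let me conclude that $V$ coincides with a smooth embedded $F$-minimal surface in the annuli, and, letting inner radii shrink, in punctured balls around $x$.

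Finally, I remove the puncture at $x\neq p$ using the fact that a smooth stable $F$-stationary surface in a punctured ball, with the curvature estimates, extends smoothly across the point in dimension $2$. The only remaining issue is that $x$ could be a point where multiplicity or density might blow up. I would rule this out with the replacement construction in a full ball, which is available since $x\neq p$. Nontriviality follows from $\mathbf F(V)=W_F>0$. This yields $\Sigma$, $C^2$ embedded away from $p$.
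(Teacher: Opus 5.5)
There is a genuine gap in your third step, exactly where you locate the main obstacle. You propose to glue consecutive replacements in overlapping concentric annuli directly for the $F$-stationary limit, using stability, smoothness of the replacements and the maximum principle.

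In the isotropic case this gluing is driven by monotonicity. It gives upper density bounds at points of the interface $\partial^+A''$, hence blow-ups there, and mass comparison across scales shows they are hyperplanes. For general $F$ none of this is available. It is not even known that $V''$ has a blow-up at an interface point, and the replacement sheets may a priori accumulate there with high multiplicity. Stability, curvature estimates and the maximum principle act only on the smooth open pieces $\Sigma'$, $\Sigma''$ and say nothing at the interface. So your plan stalls exactly where the paper says the direct approach fails.

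The missing idea is not to prove Theorem~\ref{1} directly. Instead, first run the min-max for hypersurfaces of non-zero constant anisotropic mean curvature $c$, in the spirit of Zhou--Zhu (Theorem~\ref{t:existenceCMC}). There the replacements have multiplicity one away from a touching set of multiplicity two, which for a.e.\ outer radius meets $\partial^+A''$ in isolated points.

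At multiplicity-one interface points, one shows that the local Plateau solutions $\tilde\Gamma^k$ approximating $V''$ have a half-plane blow-up (ideas of Hardt and De Philippis--Maggi). This gives boundary regularity by Duzaar--Steffen and uniform boundary curvature estimates by White. Here $n=3$ enters, through the critical exponent in the stability inequality. One then glues by a graphicality argument.

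At the isolated multiplicity-two points, one uses the density upper bound from \cite{DPDRH} to get a blow-up $\mathbf C\geq 2T_y\Sigma'$. One shows that $\mathbf C-2T_y\Sigma'$ is $F$-stationary and contained in a wedge, and kills it with the maximum principle. Only afterwards does one let $c=1/k\to 0$, using stability with uniform curvature control to pass smoothly to an $F$-stationary limit that is smooth away from one point.

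There are two further problems. First, your last step invokes a removable-singularity theorem for smooth stable $F$-stationary surfaces in a punctured ball. No such result is available here: without monotonicity the puncture may carry infinite density, so the stability inequality cannot be extended across it. This is exactly why $p$ survives in Theorem~\ref{1}. For $x\neq p$ the theorem is also unnecessary. The combinatorial lemma gives either $r\equiv R$, or $r(x)=d(x,p)$ with $r(p)=\infty$. In the latter case $p$ does have replacement annuli of all sizes centred at it; what it lacks is a neighbourhood covered by annuli centred elsewhere. Hence every $y\neq p$ lies in some $B(x,r(x))\setminus\{x\}$ with $x\neq y$, and regularity at $y$ follows by covering.

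Second, comparison with cones does not give a lower density bound. It only gives monotonicity of $f(r)/r^{d\lambda/\Lambda}$, which is an upper bound of the wrong homogeneity. Moreover, a bound for the replacement is not yet a bound for $V$. The paper obtains $\Theta^{n-1}_*(x,V)\geq C$ as follows. It makes a replacement in $A_x(r,2r)$, which is non-empty by the maximum principle. It then applies Allard's lower density estimate for $F$-stable surfaces at a point of $\partial B(x,3r/2)$, and transfers the bound to $V$ using $\mathbf F(V)=\mathbf F(V')$.
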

In the second work \cite{DPDRL}, in collaboration with De Philippis and Li, we achieved the optimal result in every dimension:
\begin{theorem}[\cite{DPDRL}]\label{2}
For every $n\geq 3$, there exists a smooth embedded anisotropic minimal hypersurface $\Sigma \subset M$ such that $\text{Sing}(\Sigma) \equiv \overline{\Sigma}\setminus \Sigma$ has $\mathcal{H}^{n-3}(\text{Sing}(\Sigma)) = 0$.
 \end{theorem}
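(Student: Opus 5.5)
The plan is an anisotropic Almgren--Pitts scheme with a regularity step that avoids monotonicity and relies on the anisotropic Plateau regularity of Almgren--Schoen--Simon \cite{SSA}.

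\textbf{Min-max sequence and stationary limit.} Consider the space of mod 2 flat cycles (equivalently, boundaries of Caccioppoli sets) in $M$. This space is weakly homotopy equivalent to $\mathbb{RP}^\infty$. Take a nontrivial one-parameter sweepout $\Phi:[0,1]\to\mathcal Z_{n-1}(M;\mathbb Z_2)$ and define the $F$-width
\[
L:=\inf_{\Phi'}\sup_{t}\mathbf F(\Phi'(t)).
\]
An isoperimetric argument gives $L>0$, since $\lambda\le F\le\Lambda$ lets us compare with the area width. A pull-tight procedure with respect to $\mathbf F$ produces a min-max sequence whose varifold limits $V$ are all $F$-stationary, with $\|V\|(M)=L$. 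Here one uses the $C^1$ continuity of $\delta_F$ in the varifold topology, given by the formula $\delta_FV(g)=\int B_F(T):Dg\,dV$.

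\textbf{Almost minimizing property and rectifiability.} A combinatorial argument of Almgren--Pitts type, with no use of monotonicity, selects a min-max limit $V$ that is $F$-almost minimizing in small annuli around every point. Comparing with cone competitors as in the introduction gives the weak bound $f(r)/r^{(n-1)\lambda/\Lambda}$ nondecreasing. This bound still yields $\Theta^{n-1}_*(x,V)>0$ at every point of $\operatorname{spt}\|V\|$, since the almost minimizing property supplies a lower isoperimetric-type bound. Uniform convexity in codimension one implies the atomic condition (Section \ref{cod1conv}). Theorem~\ref{thm:rect} then shows $V$ is rectifiable, and a further compactness argument for almost minimizers shows it has integer multiplicity.

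\textbf{Regularity via replacements.} In each small annulus, replace $V$ by the solution of the constrained anisotropic Plateau problem among sets of finite perimeter. That solution is $C^2$ away from a set of zero $\mathcal H^{n-3}$ measure by \cite{SSA}. One then shows the replacements glue, as in Pitts, so that $V$ coincides with a stable $F$-minimal hypersurface in the annuli.

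This step is the main obstacle. Without monotonicity there is no unique tangent cone, no stratification, and no standard density gap. I would first try to replace those tools with curvature and sheeting estimates for stable anisotropic hypersurfaces with small singular set, combined with a removable singularity argument. The aim is to show that the regular parts of overlapping replacements extend across the centers of the annuli. Each point center is then either regular or lies in a set of $\mathcal H^{n-3}$ measure zero, because the singular sets of all replacements inherit the \cite{SSA} bound. This yields $\mathcal H^{n-3}(\mathrm{Sing}(\Sigma))=0$ for $\Sigma=\operatorname{spt}\|V\|$ minus its singular set.

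Nontriviality of $\Sigma$ follows from $L>0$.
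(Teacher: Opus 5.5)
There is a genuine gap in the regularity step. You flag it as the main obstacle but do not resolve it. The sentence ``the replacements glue, as in Pitts'' is exactly where the isotropic proof uses monotonicity: blow-ups at interface points of two consecutive replacements exist, and are hyperplanes, by mass comparison across scales.

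For general $F$ nobody knows a priori that interface points have finite upper density. Possible higher multiplicity there is a real obstruction to smooth gluing. Your proposed substitutes do not remove it:
\begin{itemize}
\item Allard's compactness and curvature estimates for $F$-stable hypersurfaces with $\mathcal H^{n-3}$-null singular set \cite{All2} need a uniform upper density bound as an \emph{input}, and that bound is exactly what is unavailable.
\item For $n>3$ there are no boundary curvature estimates at multiplicity-one interface points.
\item For $n=3$, removing the last Almgren--Pitts point also needs that bound, since without density control the stability inequality cannot be extended across it.
\end{itemize}

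The paper does not attack the minimal case directly. Its route is as follows.
\begin{itemize}
\item It first constructs hypersurfaces of \emph{nonzero} constant anisotropic mean curvature $c$, following Zhou--Zhu \cite{ZZ1,ZZ2}. Their replacements have multiplicity one off an $(n-2)$-dimensional set of multiplicity two.
\item At multiplicity-one interface points it makes a third replacement in a lens-shaped annulus bounded by spherical caps meeting at a small angle. It shows a blow-up there is the tangent plane of the earlier replacement, then propagates this by unique continuation, the maximum principle \cite{DPDRH} and a foliation by such lenses.
\item It proves a uniform density-ratio upper bound using optimal nested volume-parametrized sweep-outs \cite{CL,CLS}, corrected by anisotropic Plateau problems on the bad parameter intervals.
\item Only then does it let $c=1/k\to0$, using $F$-stability and the density bound to get smooth convergence to an $F$-minimal hypersurface.
\end{itemize}
The CMC approximation, the third replacement and the uniform upper density bound are all absent from your plan. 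Without them the regularity step is not a proof.

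The earlier steps also fail as written. The cone-competitor inequality says $f(r)/r^{(n-1)\lambda/\Lambda}$ is nondecreasing, i.e.\ $f(r)\le Cr^{(n-1)\lambda/\Lambda}$ for small $r$. That is an upper bound with the wrong exponent, not a positive lower density. It is also derived for minimizers in whole balls, not for a stationary limit of a min-max sequence. Almgren--Pitts almost minimality of the sequence does not by itself transfer isoperimetric lower bounds to $\|V\|$. The paper instead gets $\Theta^{n-1}_*(x,V)\ge C$ in three moves: replace in $A_x(r,2r)$, use the maximum principle to see the replacement is nonempty, and apply Allard's lower density estimate to the stable replacement.

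Similarly, integrality cannot come from a compactness theorem for integral varifolds \cite{DR2}, because $\delta_F\Gamma^k$ has no uniform bound. The paper instead shows $\theta\in\mathbb N$ as follows: it blows up at planar points, replaces in annuli, and passes to the limit using compactness of the stable replacements (which again needs the upper density bound when $n>3$). It then concludes with the maximum principle and the constancy theorem.
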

Theorem~\ref{2} in particular improves the earlier Theorem~\ref{1}, because for $n=3$, we obtain $\mathcal{H}^0(\text{Sing}(\Sigma))=0$, which in turn implies there are no singular points at all. Hence, the anisotropic minimal surface we construct is actually smooth. We will later discuss the new ideas that were needed to eliminate that one singular point for $n=3$ and to handle the general higher-dimensional case.

We remark that an alternative strategy for constructing closed (isotropic) minimal hypersurfaces in closed Riemannian manifolds is provided by Allen-Cahn approximations \cite{Ilmanen,HutchinsonTonegawa2000,Tonegawa2005,TonegawaWickramasekera2012,Wickramasekera2014,Guaraco2018}. In joint work with Pigati, we have initiated a program to construct anisotropic min-max minimal hypersurfaces via a suitable anisotropic Allen-Cahn approximation \cite{DRP}. Specifically, we study an anisotropic analogue of the Allen-Cahn energy and establish a Modica-type gradient estimate for its critical points. Combining this with curvature estimates for stable solutions, we show that the energy densities of stable or bounded Morse index critical points concentrate along an integer rectifiable varifold that is stationary with respect to the anisotropic integrand. As a consequence, we obtain a (possibly singular) anisotropic min-max hypersurface via the Allen-Cahn approach, yielding an anisotropic analogue of the classical result of Hutchinson and Tonegawa \cite{HutchinsonTonegawa2000}.

To conclude, we recall that, in order to construct (isotropic) minimal hypersurfaces with control on the genus, a natural refinement of the min-max procedure is to seek critical points within a fixed \emph{isotopy class}. This restriction introduces significant difficulties in the regularity theory, since the class of admissible competitors is much smaller. Moreover, the genus need not be lower semicontinuous along sequences of surfaces, so a fine control of the limit is required.
A major breakthrough in this (isotropic) direction was achieved in $3$-dimensional ambient manifolds by Meeks, Simon and Yau \cite{MSY}, who showed that minimizing sequences within a fixed isotopy class converge, in the sense of varifolds, to smooth limits and enjoy lower semicontinuity of the genus. Their approach relies on the regularity theory developed by Almgren and Simon \cite{AS} and has had important topological applications. Building upon the regularity framework of \cite{MSY,AS}, Simon and Smith \cite{smith} outlined a min-max theory for surfaces in $3$-manifolds constrained to isotopy classes. This program was completed by Colding and De Lellis \cite{CDL} and De Lellis and Pellandini \cite{DP}, and later refined by Ketover \cite{Ketover}, who established sharp genus bounds for Heegaard sweep-outs.

In recent joint work with Halavati and Wang \cite{DRHW}, we establish an anisotropic analogue of the celebrated Meeks-Simon-Yau theorem: every minimizing sequence of surfaces within a fixed isotopy class converges to a smooth stable anisotropic minimal surface, and the genus is lower semicontinuous along the sequence. This result also strengthens White's foundational existence theory for anisotropic minimal disks \cite{W3}: for a fixed boundary curve on the $2$-sphere, White proved that \emph{at least one} minimizing sequence of smooth disks converges to a smooth disk, while in \cite{DRHW} we show that \emph{every} minimizing sequence of disks enjoys this property.
As an application, we develop an anisotropic Simon-Smith min-max theory: in every closed $3$-manifold, we construct anisotropic min-max sequences within fixed isotopy classes whose limits are stable anisotropic minimal surfaces, smooth away from at most one point. If the integrand satisfies either an ellipticity bound or a $C^3$-pinching condition, we remove this last singular point by proving two independent removable singularity theorems for anisotropic minimal surfaces that are smooth and stable away from finitely many points. These theorems also allow us to remove the singularities arising in the anisotropic Almgren-Pitts min-max construction of Theorem~\ref{1}, as well as in its multiparameter variants.
\medskip

For the remainder of the lecture, we outline the proof strategy for Theorem~\ref{1} and Theorem~\ref{2}. Actually, in \cite{DPDR,DPDRL}, we cannot prove Theorem~\ref{1} and Theorem~\ref{2} directly; instead, we first establish the existence of closed hypersurfaces with \emph{non-zero} constant anisotropic mean curvature (CMC), see Theorem~\ref{t:existenceCMC} and Theorem~\ref{thm:main3}. For the sake of exposition, we will discuss the case of zero anisotropic mean curvature, until we hit the major obstruction, and we will explain how to solve it by constructing closed CMC hypersurfaces.
\subsection{Setup of the variational problem}
A direct minimization (Plateau-type) approach cannot find a closed anisotropic minimal hypersurface, since, if $M$ is topologically a sphere, any closed hypersurface is homotopic to a point in $M$. Hence, we need to employ a min-max scheme using \emph{sweep-outs} on $M$.

Roughly speaking, a sweep-out on $M$ is a one-parameter family of surfaces that ``fills'' the manifold $M$ in a continuous way, without concentration of mass. More precisely
a sweep-out on $M$ is a continuous map $\Gamma: t\in [0, 1] \to \Gamma_t \in \mathcal{Z}^0_{n-1}(M; \mathbb{Z}_2)$, where $\mathcal{Z}^0_{n-1}(M; \mathbb{Z}_2)$ is the space of modulo two $(n-1)$-cycles on $M$ in the connected component containing $0$, endowed with the topology induced by the flat metric $\mathcal{F}$, satisfying  the following conditions:
  \begin{enumerate}
  \item There exists a continuous map $\Omega: t\in [0,1] \to \Omega_t$ valued in finite perimeter sets, such that $\Omega_0 = \emptyset$, $\Omega_1 = M$, and $\Gamma_t = \partial^* \Omega_t$ for all $t \in [0,1]$.
  \item There is no concentration of mass, meaning that
    $$
      \lim_{r \to 0} \sup\bigl\{\mathbf{F}(\Gamma_t \cap B(p,r)) \mid t \in [0, 1], p \in M\bigr\} = 0\,.
   $$
  \end{enumerate}
The collection of all sweep-outs of $M$ will be denoted by $\mathcal{L}$.
We consider the variational problem
$$
\inf_{\Gamma\in\mathcal L}\left[\max_{t\in[0,1]} \mathbf{F}(\Gamma_t)\right]=m.
$$
A sequence of sweep-outs $\{\Gamma^k\}_{k\in\mathbb N}\subset\mathcal L$ is minimizing if
$$
\lim_{k\to\infty}\max_{t\in[0,1]} \mathbf{F}(\Gamma_t^k)\;=\;m\, .
$$
A sequence $\{\Gamma_{t_k}^k\}$ is a min-max sequence if $\{\Gamma^k\}_{k\in\mathbb N}$ is minimizing and
$$\lim_{k\to\infty} \mathbf{F}(\Gamma_{t_k}^k)=m.$$
The proof strategy is to find a min-max sequence $\{\Gamma_{t_k}^k\}$ such that
 $\Gamma_{t_k}^k\rightharpoonup V$ in the sense of varifolds, with $\delta_F V=0$;
 then to show $V$ is a rectifiable varifold; then to prove $V$ is an integral varifold; and finally to conclude the optimal regularity for $V$.
\subsection{Construction of the candidate $F$-stationary varifold}
We will skip the construction of a min-max sequence which converges to an $F$-stationary varifold, as it is based on a classical \emph{pull-tight} procedure, due to Almgren and Pitts.
This is conceptually similar to finding a saddle point (a mountain pass) by gradually tightening the paths via a Palais-Smale deformation.
\medskip

The pull-tight procedure produces an $F$-stationary varifold $V$ in $M$ which is the candidate anisotropic minimal hypersurface. However, at this stage $V$ is merely an $F$-stationary varifold. The remaining steps are devoted to showing $V$ is optimally regular. Stationarity is not enough. Even for $F\equiv 1$, the optimal regularity of stationary $(n-1)$-varifolds is a major open problem, which in any case cannot be better than proving that the singular set has dimension $n-2$, without further assumptions. To overcome this issue, we need to enforce some stability properties for $V$, or more precisely an index bound.
To this aim, we will make use of the \emph{replacement property}. The idea of a \emph{replacement} is borrowed from Almgren-Pitts' regularity theory. First, let us define a replacement properly.
\begin{defin}\label{def:replac}
 Given a varifold \(V\) and  a compact set \(K \subset M\),  we say that a varifold \(V'\) is a replacement for \(V\) in \(K\)  if
\begin{itemize}
  \item[(a)] $V \restr (M \setminus K) = V' \restr (M \setminus K)$;
  \item[(b)] $  \mathbf{F}(V) =  \mathbf{F}(V') $;
  \item[(c)] there exists an almost embedded (open) hypersurface \(\Sigma\subset K\)  with
 \begin{equation}\label{singgg}
 \mathcal H^{{n-3}}(\overline{\Sigma}\setminus \Sigma)=0
 \end{equation}
  which is \(F\)-stable in \(\mathrm{Int}(K)\) and such that \(V'\restr \mathrm{Int} (K)=\Sigma\).
\end{itemize}
\end{defin}
Roughly speaking, we can prove the following replacement property for $V$. We sacrifice some details in the statement in favor of clarity. In the following we denote the annular regions $A_{x}(s,t) := B(x,t)\setminus \overline{B(x,s)}$.
\begin{prop}[Replacement property]\label{prop:replace}
There is a min-max sequence $\Gamma^k$ and a function $r: M \to (0,\infty]$ such that:
 $\Gamma^k \rightharpoonup V$, $\delta_F V=0$ and for every $x\in M$, $0 < s < t < r(x)$ there exists a replacement $V'$ for $V$ in $A_x(s,t)$.
Moreover the same replacement property holds for $V'$.
\end{prop}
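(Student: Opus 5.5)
The plan follows the Almgren--Pitts scheme (as adapted by Colding--De Lellis and De Lellis--Tasnady), now run with $\mathbf{F}$ in place of area. We proceed in four steps.

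\textbf{Step 1: pull-tight and almost minimizing sequences.} After a pull-tight, every min-max limit of the tightened sweep-outs is $F$-stationary. A combinatorial argument on the sweep-outs then yields a min-max sequence $\Gamma^k=\Gamma^k_{t_k}$ that is \emph{$F$-almost minimizing in annuli}. This means there is a function $r:M\to(0,\infty]$ such that, for every $x$ and every annulus $A_x(s,t)$ with $t<r(x)$, the following holds. Any isotopy (or, in the Almgren--Pitts discretized setting, any sequence of small-mass deformations) supported in the annulus that lowers $\mathbf{F}$ by more than $\varepsilon_k$ must at some intermediate stage raise $\mathbf{F}$ by more than $\varepsilon_k$, where $\varepsilon_k\to0$. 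The argument is by contradiction: if at every point two disjoint annuli existed where the sequence is not almost minimizing, we could deform the sweep-outs locally and lower $\max_t\mathbf{F}(\Gamma_t)$ below $m$. This step is purely combinatorial and uses only the no-concentration property and the definition of $m$, so it carries over verbatim from the isotropic case.

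\textbf{Step 2: constrained minimization.} Fix $x$, an annulus $K=\overline{A_x(s,t)}$ with $t<r(x)$, and $k$. We minimize $\mathbf{F}$ among all competitors obtained from $\Gamma^k$ by admissible deformations in $K$ that do not increase $\mathbf{F}$ by more than $\varepsilon_k$ along the way, and call a minimizer $\Gamma^{k,*}$. By construction $\Gamma^{k,*}$ agrees with $\Gamma^k$ outside $K$. Moreover, $\Gamma^{k,*}$ is \emph{locally} $\mathbf{F}$-minimizing in sufficiently small balls in $\mathrm{Int}(K)$ among finite-perimeter competitors, because small perturbations inside tiny balls cost less than $\varepsilon_k$. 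In codimension one, boundaries of finite perimeter sets that minimize the uniformly convex $\mathbf{F}$ locally satisfy the Almgren--Schoen--Simon regularity \cite{SSA}, with singular set of zero $\mathcal H^{n-3}$-measure. This gives a hypersurface $\Sigma^k\subset\mathrm{Int}(K)$ that is $F$-stable there.

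\textbf{Step 3: compactness and the replacement.} Let $V'$ be a subsequential varifold limit of the $\Gamma^{k,*}$. Property (a) holds because $\Gamma^{k,*}=\Gamma^k$ outside $K$. For (b), we have $\mathbf{F}(\Gamma^{k,*})\le\mathbf{F}(\Gamma^k)$, and the reverse inequality in the limit follows from the almost minimizing property: if the energy dropped strictly, we could concatenate with a sweep-out and violate the min-max value $m$. Thus $\mathbf{F}(V')=\mathbf{F}(V)$, and $V'$ is again a min-max limit, hence $F$-stationary. For (c) we need compactness of stable $F$-minimizing hypersurfaces with $\mathcal H^{n-3}$-small singular sets. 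Here we invoke the Schoen--Simon--Almgren curvature estimates for $F$-stable hypersurfaces, which require no monotonicity. Together with the rectifiability Theorem~\ref{thm:rect}, these give convergence of $\Sigma^k$ to an almost embedded $F$-stable $\Sigma$ satisfying \eqref{singgg}, with $V'\restr\mathrm{Int}(K)=\Sigma$ with multiplicity. Finally, because $V'$ is itself the limit of the almost minimizing sequence $\Gamma^{k,*}$ (which is almost minimizing in the same annuli by a diagonal argument), Steps 2--3 apply again to $V'$. This gives the last assertion of the proposition.

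\textbf{Main obstacle.} The hardest step is (c). The isotropic compactness relies on monotonicity to get density bounds and to exclude mass concentrating on lower-dimensional sets, and this is unavailable here. I would first try to replace it as follows. Use the local minimality to get uniform upper and lower density bounds via the cone competitor argument and isoperimetry. Although the resulting exponent $d\lambda/\Lambda$ is wrong, it still gives positive lower density, which is what Theorem~\ref{thm:rect} requires. Then use the Schoen--Simon--Almgren sheeting estimates to upgrade varifold convergence to smooth multi-sheeted convergence away from a set of zero $\mathcal H^{n-3}$-measure. A further difficulty is that the sheets may touch. This is why the statement only asks for almost embedded $\Sigma$, and why the regularity of the full min-max limit later requires the CMC detour mentioned above.
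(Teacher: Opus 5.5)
\textbf{The gap is in Step 3, exactly where you place the main obstacle, and your proposed fix does not work.} Your outline otherwise matches the scheme the paper sketches: pull-tight, almost minimizing in annuli, minimization with slack among sets reachable inside the annulus by low-energy paths, local Plateau minimality in small balls, then a limit.

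The balls in which $\Gamma^{k,*}$ solves the anisotropic Plateau problem have a radius $\rho_k$ that depends on $k$. To reach a competitor in $B_\rho(y)$ you need a path whose energy exceeds $\mathbf F(\Gamma^{k,*})$ by at most the slack, and the slack tends to $0$ with $k$. Sweeping out $B_{\tau\rho}(y)$ can cost up to $\mathbf F(\Gamma^{k,*}\cap B_\rho(y))+C\rho^{n-1}$, so the radius you can guarantee tends to $0$. Consequently, the density bounds you get from comparison and isoperimetry hold only at scales below $\rho_k$. They are not uniform in $k$ at any fixed scale, and they say nothing about the limit.

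The limit $k\to\infty$ therefore has to use $F$-stability, which is scale-invariant. But compactness of $F$-stable hypersurfaces whose singular set is only $\mathcal H^{n-3}$-null takes a uniform upper bound on density ratios as a \emph{hypothesis}. This is true for Allard \cite{All2}, and likewise for the Schoen--Simon--Almgren sheeting and curvature estimates for stable, non-minimizing hypersurfaces. In the isotropic case that bound comes from monotonicity plus the global mass bound, which is precisely what is unavailable here. Theorem~\ref{thm:rect} does not fill the hole: the issue is not positive lower density of the limit, but sheeted convergence with control of the singular set, and that needs upper bounds.

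\textbf{The missing ingredient is how the paper obtains that bound.} For $n=3$ no bound is needed: the local minimizers are smooth, and \cite{All2} applies to smooth $F$-stable surfaces without density estimates. For $n>3$, the uniform density-ratio upper bound is one of the main contributions of \cite{DPDRL}, and it is engineered at the level of the sweep-outs rather than recovered afterwards from local minimality. One starts from optimal nested volume-parametrized sweep-outs (Chambers--Liokumovich, Chodosh--Liokumovich--Spolaor). On the countably many ``bad'' parameter intervals for each simplex of each dyadic triangulation, one replaces the slices by solutions of anisotropic Plateau-type problems, so that \eqref{uu} holds for every slice at every scale. So your Step 1 does not carry over verbatim: in general dimension, the sweep-outs you extract the min-max sequence from must already carry this density control.

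A smaller point concerns stationarity. $\Gamma^{k,*}$ is not a slice of the pulled-tight sweep-outs, so ``$V'$ is again a min-max limit, hence $F$-stationary'' is not justified. Stationarity of $V'$ should instead come from the almost-minimizing property of $\Gamma^{k,*}$ in small annuli (limits of almost-minimizing sequences are stationary), together with $V'=V$ off $K$.
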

As previously mentioned, for $F\equiv 1$ the proof of the replacement property is due to Almgren and Pitts \cite{Pitts}. This property has then been revisited by Colding and De Lellis \cite{CDL} and by De Lellis and Tasnady \cite{DLT}.
\begin{proof}[Rough proof idea.]
Consider a min-max sequence $\Gamma^k_{t_k}=\partial \Omega^k_{t_k}$.
Fix $x\in M$ and $0 < s < t < r(x)$.
For every $k$, choose a minimizer $\partial^* \tilde{\Omega}^k$ for $\mathbf{F}$ among boundaries of sets $Q$ for which
there exists $\{\Omega_\tau\}_{\tau\in[0,1]}$ satisfying
$\Omega_0=\Omega^k_{t_k}$,
$\Omega_1=Q$,
$\Omega_\tau\setminus A_x(s,t) =\Omega^k_{t_k} \setminus A_x(s,t)$ for all $\tau\in[0,1]$,
$\mathbf{F}(\partial^* \Omega_\tau)\leq \mathbf{F}(\partial^* \Omega^k_{t_k})+\frac{1}{k}$ for all $\tau\in[0,1]$.
Then, prove that in balls that are small enough, $\tilde{\Omega}^k$ solves the anisotropic Plateau problem.
Finally, use this fact to deduce that $\partial^* \tilde{\Omega}^k\rightharpoonup V'$, where $V'$ is the desired replacement.
\end{proof}
\subsection{Rectifiability of the limit varifold}
The next step is to prove that $V$ is a rectifiable varifold. Thanks to Theorem~\ref{thm:rect}, and since $F$ satisfies the atomic condition and $\delta_FV=0$, we just need to show that  $\Theta^{n-1}_*(x,V) > 0$ for $\|V\|$-a.e. $x$.
 Fix $x\in \operatorname{spt} \|V\|$, and $2r<r(x)$. By Proposition \ref{prop:replace}, replace $V$ with $V'$ in $A_x (r,2r)$. By the maximum principle \cite{DPDRH}, we deduce $V'\restr A_x (r,2r)$ is a {\em non-empty} optimally regular $F$-stable surface $\Sigma$. Otherwise  $V\restr B(x,r)$ would be an $F$-stationary varifold entirely contained in a ball. Intuitively, a smooth anisotropic minimal surface cannot abruptly end inside the manifold without boundary conditions, as a consequence of the maximum principle. Therefore, the replacement $V'$ must be nontrivial in $A_x (r,2r)$.

Now choose a point $y\in \partial B(x,3r/2)\cap \Sigma$. Since $\Sigma$ is $F$-stationary and stable, we can apply the lower density estimates for $F$-stationary and stable hypersurfaces proved by Allard \cite{All2}. We deduce that there exists $C>0$ depending only on $F$ and $n$ such that
$$\mathcal{H}^{n-1}(\Sigma\cap B(y,r/2))\geq C(r/2)^{n-1}.$$
In particular
$$\frac{\|V\|(B(x,2r))}{(2r)^{n-1}}\geq \frac{C\mathcal{H}^{n-1}(\Sigma\cap B(y,r/2))}{(2r)^{n-1}}\geq \frac{C(r/2)^{n-1}}{(2r)^{n-1}}\geq C.$$
Passing to the liminf in $r$, by the arbitrariness of $x$, we deduce that there exists $C>0$, depending only on $F$ and $n$ such that $\Theta^{n-1}_* (x, V)\geq C$ for any $x\in \operatorname{spt} \|V\|$, as claimed.

\subsection{Integrality of the limit varifold}
The next goal is to show $V$ is an integral varifold, that is, its multiplicity is integer-valued $\mathcal{H}^{n-1}$-a.e. on its rectifiable support.
Since $\Theta^{n-1} (x, V)>0$ for $\|V\|$-a.e. $x$, a standard strategy is to invoke our compactness result \cite[Theorem 4.1]{DR2}: if one has a sequence of integral varifolds with uniformly bounded masses and anisotropic first variations, then any limit varifold with positive density is also integral. In our situation, each $\Gamma^k$ in the min-max sequence is an integral varifold. However, we cannot guarantee a uniform bound on $\delta_F \Gamma^k$. We argue instead in the following way.

Fix $x\in \operatorname{spt} \|V\|$ such that the tangent cone $C$ is unique and is a plane $\pi$ with constant multiplicity $\theta$, that is, $C=\theta|\pi|$. We recall that this is true for $\|V\|$-a.e. $x$, given that $V$ is rectifiable, cf. \cite{SimonBook}.
We need to prove that $\theta$ is an integer value.
We observe that $\delta_FC=0$. Consider a sequence $\rho_j\downarrow 0$ such that $(\eta^{x,\rho_j})_\# V \rightharpoonup C$. Replace $V$ by $V'_j$ in $A_x( \rho_j/4, 3\rho_j/4)$ and set $W'_j= (\eta^{x,\rho_j})_\# V'_j$. After possibly passing
to a subsequence, we can assume that $W'_j  \rightharpoonup C'$, where $C'$ satisfies again $\delta_FC'=0$. A trivial consequence of the definition of replacement is that
\begin{equation}\label{CC'}
\mbox{$C'=C=\theta|\pi|$ in $B(0,1/4)\cup A_0(3/4, 1)$.}
\end{equation}
Moreover, by definition of replacement and rescaling properties, the restrictions of $W'_j$ to $A_0(1/4, 3/4)$ are the varifolds associated with optimally regular, almost embedded, $F$-stable hypersurfaces. Hence, again by compactness of $F$-stable hypersurfaces \cite{All2}, we conclude that $W'_j$ converge locally smoothly (with integer multiplicity) to some optimally regular embedded $F$-stable hypersurface $\Sigma'$ in $A_0(1/4, 3/4)$.

Here it is important to notice a major difference between the case $n=3$ and the general-dimensional case. For $n=3$ replacements are completely smooth in $A_0(1/4, 3/4)$, hence there is no need of uniform upper density estimates to apply \cite{All2}. On the contrary, for the general-dimensional case, the presence in the replacement of a singular set as in \eqref{singgg} forces us to prove a uniform upper density bound. As this obstruction will also show up in the proof of optimal regularity, we will discuss how to obtain uniform upper density estimates at the end of this section.
Since $\delta_FC'=0$, the maximum principle \cite{DPDRH} implies that
\begin{equation}\label{cond}
\overline{\Sigma'}\cap \partial B(0,3/4) \subset \pi.
\end{equation}
 We claim that $\Sigma'\subset \pi$. Indeed,  assume by contradiction the claim is false. Up to rotation, we can assume $\pi=\operatorname{span}\{e_1,\dots, e_{n-1}\}$. Moreover, without loss of generality we can assume $\Sigma' \cap \{\langle z, e_n\rangle >0\}\neq \emptyset$.
There exists $\max\{a>0: \Sigma' \cap \{\langle z, e_n\rangle =a\}\neq\emptyset\}$. By the classical maximum principle (cf. for instance \cite[Corollary 5.1]{SimonBook}):
$$\{\langle z, e_n\rangle =a\}\cap A_0(1/4, 3/4) \subset \Sigma'\cap A_0(1/4, 3/4),$$
 which contradicts \eqref{cond}.
Recalling \eqref{CC'}, this implies that $\operatorname{spt} (\|C'\|) \subset \pi$.  Since $\delta_FC'=0$, the Constancy Theorem \cite[Proposition 5]{DPDRH}, together with \eqref{CC'}, implies that  $C'=\theta|\pi|$. Since $\Sigma'$ has integer multiplicity in $A_0(1/4, 3/4)$, we conclude that $\theta$ is an integer.
\subsection{Optimal regularity of the limit varifold}
As a next step, we would like to prove optimal regularity in punctured balls. If unique continuation for $F$-stationary varifolds were true, we could simply make a replacement in a small annulus centered at a given point and deduce by unique continuation that $V$ coincides with the optimally regular replacement inside the annulus. By making the inner radius of the annulus arbitrarily small, we would deduce the regularity in the punctured ball.

Unfortunately this naive argument does not work, as unique continuation does not hold for stationary varifolds. However, Almgren and Pitts developed a scheme to make this idea work, by constructing consecutive replacements in concentric annuli.
Intuitively, if one is able to show that any two consecutive replacements in two concentric non-disjoint annuli glue smoothly, then unique continuation applies, and an easy application of the maximum principle shows that all consecutive replacements coincide with the original varifold $V$ in the punctured ball.

Proving the smooth gluing of two consecutive replacements in the isotropic setting $F\equiv 1$ is achieved by means of the monotonicity formula. Indeed, for $F\equiv 1$, the blow-ups at the points of the interface of the two consecutive replacements are proved to be hyperplanes, by simple mass comparison at different scales.
However, for a general anisotropic integrand this strategy does not work, due to the lack of a monotonicity formula. Actually, it was not even clear a priori whether a blow-up exists at a point of the interface, that is, whether upper density estimates hold at the points of the interface of two consecutive replacements.
The formation of high multiplicity seemed to be an obstruction to smoothness.
However, higher multiplicity occurs only on a {\em negligible set} if one constructs hypersurfaces with {\em non-zero} constant anisotropic mean curvature. This idea was inspired by the construction of Zhou and Zhu \cite{ZZ1,ZZ2} of constant mean curvature min-max hypersurfaces in the isotropic case, $F\equiv 1$.

This is why we decided to first prove in \cite{DPDR} and \cite{DPDRL}, respectively, the analogues of Theorem~\ref{1} and Theorem~\ref{2}, for non-zero constant anisotropic mean curvature hypersurfaces:
\begin{theorem}[\cite{DPDR}]\label{t:existenceCMC}
 If $n=3$ and $c\in \R\setminus \{0\}$, then there is a nontrivial surface $\Sigma\subset M$ without boundary which has constant anisotropic mean curvature $c$ with respect to $F$. Moreover there exists at most one singular point $p\in M$ for $\Sigma$, i.e., $\Sigma$ is $C^2$ almost embedded away from $p$. \end{theorem}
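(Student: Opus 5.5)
The plan is to run the min-max scheme of the previous subsections with the prescribed-mean-curvature functional
\[
\mathbf{A}^c(\Omega):=\mathbf{F}(\partial^*\Omega)-c\,\mathcal H^n(\Omega),
\]
defined on finite perimeter sets $\Omega\subset M$, in place of $\mathbf F$. This follows Zhou--Zhu. Sweep-outs are again families $\{\Omega_t\}$ with $\Omega_0=\emptyset$ and $\Omega_1=M$, and we set $m^c:=\inf_{\Gamma}\max_t \mathbf A^c(\Omega_t)$. We first have to check the mountain pass geometry, namely $m^c>\max\{\mathbf A^c(\emptyset),\mathbf A^c(M)\}=\max\{0,-c\,\mathrm{Vol}(M)\}$. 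For sets of small volume this follows from the anisotropic isoperimetric inequality, since $\mathbf F\ge\lambda\,\mathrm{Per}$; swapping $\Omega$ with its complement handles the other end. Next we adapt the pull-tight procedure to $\mathbf A^c$. This produces a min-max sequence whose boundaries converge as varifolds to $V$ with $\delta_FV(g)=-c\int\langle\nu,g\rangle$, so $V$ has bounded $F$-mean curvature. Rectifiability then follows from Theorem~\ref{thm:rect}, once lower density bounds are available.

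Second, we prove the replacement property, Proposition~\ref{prop:replace}, for $\mathbf A^c$. In small annuli we minimize $\mathbf A^c$ among sets reachable by $\mathbf A^c$-almost-decreasing isotopies, as in the rough proof idea. Locally these minimizers are minimizers of $\mathbf F$ up to a volume term, i.e.\ almost-minimizers of $\mathbf F$. For $n=3$ the Schoen--Simon--Almgren theory \cite{SSA}, adapted to almost-minimizers, then gives smooth $c$-CMC stable replacements. Lower density estimates come from Allard's estimates \cite{All2} for stable surfaces, as in the rectifiability subsection. Integrality follows by repeating the argument of the integrality subsection, now with the maximum principle for CMC surfaces.

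Third, we prove regularity away from one point via consecutive replacements in concentric annuli $A_x(s,t)$ and $A_x(s',t')$ with $s'<s<t'<t$. The key gain is that for $c\ne0$ a $c$-CMC surface which bounds a region has a well-defined mean-convex side. Two sheets touching with the same orientation coincide by the strong maximum principle. Two sheets touching with opposite orientation give a touching set of codimension at least one, hence negligible. This rules out the high-multiplicity obstruction except on a negligible set. Curvature estimates for stable CMC surfaces, the anisotropic analogue of Schoen--Simon--Yau which holds for $n=3$, give smooth convergence of the replacements as $s'\to0$. Unique continuation for almost embedded smooth CMC sheets then glues the two replacements across the interface $\partial B(x,s)$. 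Letting $s\to0$, $V$ is almost embedded in every punctured ball $B(x,r(x))\setminus\{x\}$.

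Finally, a covering argument with the function $r$ shows that regularity can fail at only one point. If $V$ were singular at two distinct points $p\ne q$, then $q$ would lie in a punctured ball around $p$ where regularity has been established, and symmetrically. Following Almgren--Pitts, it is enough to pick a point $p$ that is not covered, the set of such points being at most one point. The main obstacle is the gluing step: without monotonicity we need upper density bounds at interface points of consecutive replacements. There I would first try to bound density via the uniform curvature estimates of the stable CMC replacements together with the non-cancellation for $c\ne0$ described above, rather than via any mass-ratio comparison.
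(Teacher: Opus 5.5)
Your overall architecture matches the paper: the Zhou--Zhu functional $\mathbf A^c$, pull-tight, replacements in small annuli, rectifiability and integrality via replacements, consecutive replacements, and a final Almgren--Pitts covering. The gap is the step you yourself call ``the main obstacle'': it is the heart of the proof, and neither your argument nor your suggested fix reaches it. Unique continuation can only be invoked once you know that $V''$ is regular \emph{up to} the interface $\partial^+A''$ and glues smoothly with $\Sigma'$ there. Interior curvature estimates for stable surfaces (White's, for $n=3$) degenerate like $\mathrm{dist}(\cdot,\partial A'')^{-1}$. They therefore give no control on how many sheets of $\Sigma''$ accumulate at a point $y\in\partial^+A''$. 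The non-cancellation for $c\neq0$ only constrains how sheets touch inside $A''$, not their accumulation at its boundary. Without monotonicity you have neither an upper density bound nor a blow-up at $y$. Your proposed fix, bounding the density via uniform curvature estimates, presupposes curvature control at the interface, which is exactly what is missing.

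The paper handles the interface with genuinely boundary-type arguments, split by the multiplicity of $V'$ at $y$.
\begin{itemize}
\item \emph{Multiplicity one.} At such points $y\in\partial^+A''$, the paper shows that the approximating local minimizers $\tilde\Gamma^k$ are regular up to $\partial^+A''$. A half-plane blow-up, built on ideas of Hardt and De Philippis--Maggi, gives $\Theta^2_*(y,\tilde\Gamma^k\cap A'')\le\frac12$, so Duzaar--Steffen boundary regularity applies. This yields White's \emph{uniform boundary} curvature estimates, which use $n=3$: there the stability exponent is critical and gives density bounds at all scales. A graphicality argument then gives the gluing.
\item \emph{Multiplicity two.} These points are isolated for a.e.\ choice of the outer radius, by coarea. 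At such a point the paper uses an upper density bound from \cite{DPDRH} to obtain a blow-up $\mathbf C\ge 2T_y\Sigma'$. It then shows that $\mathbf C-2T_y\Sigma'$ is $F$-stationary and contained in a wedge, hence zero by the maximum principle.
\end{itemize}

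Two smaller issues remain.
\begin{itemize}
\item Your covering argument (``$q$ lies in a punctured ball around $p$'') fails, because $r(p)$ may be smaller than $d(p,q)$. You need the Almgren--Pitts dichotomy: either $r\equiv R$, or $r(x)=d(x,p)$ for a single point $p$.
\item Pull-tight yields only $|\delta_FV(g)|\le|c|\int|g|\,d\|V\|$, not an identity involving a normal. This weaker bound still suffices for Theorem~\ref{thm:rect}.
\end{itemize}
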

\begin{theorem}[\cite{DPDRL}]\label{thm:main3}
 For every $n\geq 3$ and $ c\in \mathbb{R}\setminus \{0\}$, there exists a smooth almost embedded hypersurface $\Sigma^{n-1}\subset M$ (that is, an immersed hypersurface which is locally a union of embedded sheets touching tangentially) with constant anisotropic mean curvature $c$ with respect to $F$, such that $\operatorname{Sing}(\Sigma) \equiv \overline{\Sigma}\setminus \Sigma$ has $\mathcal{H}^{n-3}(\operatorname{Sing}(\Sigma)) = 0$.
\end{theorem}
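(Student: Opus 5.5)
The plan is to run the min-max machinery of the previous subsections with the prescribed-mean-curvature functional
\[
\mathcal{A}^c(\Omega):=\mathbf{F}(\partial^*\Omega)-c\,\mathcal{H}^n(\Omega),
\]
following Zhou and Zhu. Its critical points are boundaries with constant anisotropic mean curvature $c$. First, we take sweep-outs $\{\Omega_t\}$ as in the setup. Since $\Omega_0=\emptyset$ and $\Omega_1=M$, the endpoints have $\mathcal{A}^c=0$ and $-c\,\mathcal{H}^n(M)$ respectively. An isoperimetric argument for small volumes, using $\lambda\le F\le\Lambda$, shows that the min-max value $m^c$ is strictly larger than both endpoint values; this gives nontriviality. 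A pull-tight adapted to $\mathcal{A}^c$ then produces a min-max sequence $\partial\Omega^k_{t_k}\rightharpoonup V$, where $V$ has $F$-mean curvature bounded by $|c|$ and hence lies in $L^\infty$.

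Next, we establish the replacement property of Proposition~\ref{prop:replace} in this CMC setting. In small annuli we minimize $\mathcal{A}^c$ in the homotopic class, which yields stable $c$-hypersurfaces of anisotropic CMC that are almost embedded and satisfy \eqref{singgg} by the Almgren--Schoen--Simon theory \cite{SSA} in the small-scale Plateau problem. Rectifiability follows exactly as before: the maximum principle gives nonemptiness of the replacement, and Allard's lower density bound for stable surfaces, combined with Theorem~\ref{thm:rect}, does the rest. Integrality follows from the argument of the previous subsection, since the bounded mean curvature term disappears under blow-up.

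For regularity, we make two consecutive replacements in overlapping annuli $A_x(s,t)$ and $A_x(s',t')$ with $s'<s<t'<t$, and show that they glue along $\partial B(x,s)$. At this step we use the nonzero $c$ decisively. Two sheets touching with multiplicity must both have mean curvature $c$ with respect to the outward normals of $\Omega$. The Hopf-type maximum principle for the (uniformly elliptic) anisotropic CMC equation then forces high-multiplicity or touching points to be a set of tangential contact lying in a lower-dimensional set. Almost everywhere the boundary is multiplicity one, and there $V$ is the boundary of a Caccioppoli set, so the blow-up is a multiplicity-one plane. We then apply the $C^{1,\alpha}$ regularity for codimension-one varifolds with bounded $F$-mean curvature and density near one \cite{All4}, the gluing and unique continuation for the elliptic PDE, and finally push $s'\to0$ to obtain regularity in punctured balls. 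Removing the centres uses \cite{SSA}-type dimension counting, which yields $\mathcal{H}^{n-3}(\mathrm{Sing})=0$.

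The main obstacle is upper density bounds at interface points, because no monotonicity formula is available. I would first try to derive them from the minimizing property of replacements. Replacements are locally $\mathcal{A}^c$-minimizing with respect to one side, so cone comparison gives $\|V'\|(B(y,r))\le C r^{n-1}$ with $C$ depending only on $\Lambda/\lambda$ and $c$. I would then propagate this bound to the consecutive replacement, using the fact that it agrees with the first one outside the second annulus. With this bound in hand, Allard compactness of stable hypersurfaces \cite{All2} applies, and the gluing argument goes through.
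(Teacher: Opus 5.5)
There is a genuine gap in your regularity step, exactly where the missing monotonicity formula matters. You argue that blow-ups are multiplicity-one planes because ``almost everywhere the boundary is multiplicity one''. That is a $\|V\|$-a.e.\ statement. Smooth gluing of two consecutive replacements $V',V''$, however, has to be proved at the points of the interface $\Sigma'\cap\partial^+A''$, which is $(n-2)$-dimensional and hence $\|V''\|$-null.

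At those points none of your tools apply. $V''$ is not a minimizer across $\partial^+A''$, nor is it known to be a reduced boundary, so De Giorgi-type blow-ups are unavailable. Allard's $\varepsilon$-regularity \cite{All4} needs small excess at some scale as an \emph{input}, and that is precisely what must be shown. For $n>3$ there are also no boundary curvature estimates for the approximating sequences; the $n=3$ argument uses White's estimates, which rely on $n=3$.

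The idea your proposal lacks is the paper's third replacement. One replaces $V''$ in a lens-shaped annulus $C\subset\tilde C$ bounded by two spherical caps meeting at a small angle $3\varepsilon$. Their common edge is an $(n-2)$-sphere centred at $y$ through a regular point $z\in\Sigma'\setminus A''$. The argument then runs as follows.
\begin{itemize}
\item The maximum principle shows that blow-ups of $V'''$ at $z$ are finitely many half-hyperplanes.
\item $F$-stationarity and the small angle force a blow-up at $z$ to be $T_z\Sigma'$.
\item Unique continuation makes a component of $V'''\restr C$ coincide with $\Sigma'\cap C$.
\item The maximum principle gives $\Sigma'\cap\partial\tilde C\subset\operatorname{spt}\|V''\|$.
\item Foliating $\tilde C$ yields $\Sigma'\cap\tilde C\subset\operatorname{spt}\|V''\|$, which pins every tangent cone of $V''$ at $y$ to $T_y\Sigma'$.
\end{itemize}
At multiplicity-two interface points one separately shows $\mathbf C\ge 2T_y\Sigma'$, that $\mathbf C-2T_y\Sigma'$ is $F$-stationary and lies in a wedge, and concludes with the maximum principle.

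Your upper density bound also fails as stated. The comparison $\|V'\|(B(y,r))\le Cr^{n-1}$ holds only in balls where the approximating sets $\partial\tilde\Omega^k$ are genuinely minimizing. Those balls are merely ``small enough'', with size tied to the energy slack of the constrained homotopic problem, and not uniform in $k$. The replacement itself is only $F$-stable, and stability gives no upper density bound without monotonicity. So you get no uniform bound, which is exactly what the compactness of stable hypersurfaces with $\mathcal H^{n-3}$-null singular sets \cite{All2} requires, in integrality, in gluing and in passing to limits. ``Propagating'' the bound to the next replacement via agreement outside the annulus says nothing about the mass inside it. The paper instead builds an optimal nested volume-parametrized sweep-out as in \cite{CL,CLS}. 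It then uses dyadic triangulations and re-solves anisotropic Plateau-type problems on the bad intervals $(a_i,b_i)$, so that \eqref{uu} holds at all scales simultaneously.

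Finally, for $n=3$ the statement demands \emph{no} singular points, since $\mathcal H^0(\{p\})=1$. The Almgren--Pitts combinatorial lemma may leave one point $p$, and no dimension counting removes it. The paper uses the uniform density bound to extend the stability inequality across $p$, obtain a unique planar blow-up via \cite{W}, and then remove $p$.
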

Theorem~\ref{t:existenceCMC} and Theorem~\ref{thm:main3} imply respectively Theorem~\ref{1} and Theorem~\ref{2} via the following argument:
\begin{itemize}
\item Construct $\Sigma_k$ with $c=1/k$.
\item $F$-stability allows one to prove that $\Sigma_k \to \Sigma$ smoothly, with $\delta_F \Sigma=0$.
\end{itemize}
Actually the second bullet point requires stronger versions of Theorem~\ref{t:existenceCMC} and Theorem~\ref{thm:main3}, in order to have some uniform curvature control in passing $\Sigma_k$ smoothly to the limit. We are going to omit these refined versions of the results for simplicity of exposition.

As in the proof of integrality of $V$, for the general-dimensional case we need to prove a uniform upper density bound in order to be able to show the required compactness for the hypersurfaces $\Sigma_k$ by means of $F$-stability. Indeed, since $\mathcal{H}^{n-3}(\operatorname{Sing}(\Sigma_k)) = 0$, Allard compactness for anisotropic stable hypersurfaces \cite{All2} requires a uniform upper density bound. We postpone the proof idea of the uniform density ratio bound to the end of this section.

Hence, we are left to prove Theorem~\ref{t:existenceCMC} and Theorem~\ref{thm:main3}. As the proof strategy in the positive constant anisotropic mean curvature setting remains the same as the one presented for anisotropic minimal hypersurfaces, we will just focus on the smooth gluing of two consecutive replacements in two concentric annuli.
The main advantage, as already mentioned above, is that as in the work of Zhou and Zhu \cite{ZZ1,ZZ2}, we construct replacements with positive constant anisotropic mean curvature with multiplicity one, away from an $(n-2)$-dimensional set with multiplicity two, and with singular set of zero $\mathcal{H}^{n-3}$-measure.
\begin{proof}[Proof sketch of smooth gluing.]
We prove smooth gluing of two consecutive replacements in two concentric annuli $ A'$, $ A''$ at their interface, that is, the larger connected component of $\partial  A''$, which we will denote in the following by $\partial^+  A''$. We denote the replacements by $V'$ and $V''$ and their optimally regular $F$-stable components respectively by $\Sigma'=\|V'\|\restr A'$ and $\Sigma''=\|V''\|\restr A''$. We will distinguish below between the two arguments used for $n=3$ in \cite{DPDR} and for the general dimension in \cite{DPDRL}.

{\bf The case $n=3$.}
We recall that for $n=3$, the surfaces $\Sigma'$ and $\Sigma''$ have no singular points.
We first fix $y\in \partial^+  A''$ of multiplicity one for $V'$.
We prove that the min-max sequence $ \tilde \Gamma^k$ approximating $V''$ is regular in $ A''$ up to $\partial^+  A''$. We achieve this in two steps:
\begin{itemize}
\item  For every $k\in \mathbb N$ we show that $\tilde \Gamma^k\cap  A''$ has at least one blow-up consisting of a half-plane. The proof borrows ideas from the work of Hardt \cite[Lemma 4.5]{Hardt} and of De Philippis and Maggi \cite[Lemma 5.4]{DPM}. This implies that $\Theta^{2}_*(y,\tilde \Gamma^k\cap  A'')\leq \frac 12$.
\item  Since $\tilde \Gamma^k$ solves the anisotropic Plateau problem among boundaries of finite perimeter sets in small enough balls, then regularity at $y$ follows from the work of Duzaar and Steffen \cite{DuzSte}.
\end{itemize}
By the $F$-stability of $\tilde \Gamma^k \cap  A''$, the work of White \cite{W} provides uniform boundary curvature estimates for $\tilde \Gamma^k\cap  A''$. Here we strongly use $n=3$ to obtain density estimates at all relevant scales by the extended monotonicity formula, as the exponent in the stability inequality
$$\int |A|^2\varphi^2\leq C\int |D\varphi |^2$$
is critical for $n=3$.
A graphicality argument allows one to conclude that $\Sigma'$ and $\Sigma''$ glue smoothly at $y$.
We now have to analyze points $y\in \partial^+ A''$ of multiplicity two for $V'$. By the coarea formula, for a.e. choice of the larger radius of the annulus $ A''$, these points of multiplicity two are isolated.
We fix such a point $y$.

As we need to perform a blow-up argument, we first show a density ratio upper bound at $y$ for the second replacement by means of \cite{DPDRH}. Hence there exists a blow-up $\mathbf C$ at $y$ for $V''$.
By our previous analysis at points in $\partial^+  A''$ of multiplicity one for $V'$, we have that
 $$\mathbf C  \restr A_0(\alpha,1/\alpha)\geq 2T_y\Sigma'  \restr A_0(\alpha,1/\alpha), \qquad \forall \alpha\in (0,1),$$
 where $\Sigma'$ is the first replacement. Hence $\mathbf C\geq 2T_y\Sigma'$.
Since both $\mathbf C$ and $2T_y\Sigma'$ are $F$-stationary, we deduce that $\mathbf C':=\mathbf C-2T_y\Sigma' $ is $F$-stationary. With more work, borrowing ideas from \cite{Hardt,DPM}, we also show that $\mathbf C'$ is contained in a wedge. By the maximum principle \cite{DPDRH}, we deduce that $\mathbf C'=0$, hence $\mathbf C=2T_{y} \Sigma'$.
By a graphicality argument, again we conclude that the two replacements glue smoothly.

{\bf The general case $n\geq 3$.}
The main obstruction in extending the $3$-dimensional argument to every dimension is that, for $n>3$, we have no boundary curvature estimates at a multiplicity one point $y \in \partial^+  A''$ for $V'$.
To solve this issue, our idea is to perform a third replacement $V'''$ around $y$, in an annulus bounded by spherical caps meeting at a sufficiently small angle.
More precisely, fix a point $y\in \operatorname{Reg}(\Sigma')\cap \partial^+  A''$. In particular $y$ is of multiplicity one for $V'$.  Fix $0<\varepsilon\ll 1$ to be chosen later. There exists a sufficiently small radius $r>0$ such that for every $z\in  \operatorname{Reg}(\Sigma')\cap B_r (y)$,
  \[
  \dist (T_z\Sigma',T_y\Sigma')<\varepsilon.
  \]
We fix a point $z\in \operatorname{Reg}(\Sigma')\cap B_r (y) \setminus A''$. We consider a convex domain $\tilde C$ bounded by the union of two spherical caps with the same boundary. This common boundary is an $(n-2)$-dimensional sphere $S$ centered at $y$, with $z \in S$ and $T_zS = T_z(\Sigma' \cap \partial \tilde C)$. Moreover, the two caps intersect at an angle \(3\varepsilon\). We denote by
 $C$  the annulus obtained by removing from $\tilde C$ a translation of $\frac 12 \tilde C$, that is concentric with $\tilde C$.
Let  $V'''$ be a replacement of  $V''$ inside $C$. Since $V'''$ coincides with $V''$ outside $C$, we know that the family $\{(\eta^{z,r'})_\# V'''\}_{r'<r}$ has uniformly bounded mass and consequently ${\rm Tan}(z, V''')\neq \emptyset$.
We can choose the angle  \(\varepsilon\)  sufficiently small such that there exists a blow-up varifold $W\in {\rm Tan}(z,V''')$ which satisfies $W=T_z\Sigma'$. This is obtained by first proving, via the maximum principle, that $W$ consists of finitely many half-hyperplanes. Then the desired rigidity follows from $F$-stationarity and the smallness of the angle $\varepsilon$.
As a consequence we can show that $V'$ and $V'''$ glue smoothly at $z$. In particular, \(z\) is a multiplicity one point for \(V'''\).  By unique continuation, there exists a connected component of $V'''\restr C$ which coincides with $\Sigma'\cap C$.
By the maximum principle \cite{DPDRH} we can then deduce that
$$
    \Sigma'\cap \partial \tilde  C\subset \mbox{spt}(V''') \cap \partial \tilde C \subset \mbox{spt}(V'''\restr (M\setminus \tilde C)) \subset \mbox{spt}(V''\restr (M\setminus \tilde C)).
$$
  Hence
\begin{equation}\label{usethis}
    \Sigma'\cap \partial \tilde  C \subset \mbox{spt}(V'')\,.
  \end{equation}
By the arbitrariness of the point $z\in \operatorname{Reg}(\Sigma')\cap B_r (y) \setminus A''$, we can foliate $\tilde C$ with a $1$-parameter family $\{\partial \tilde  C_\alpha\}_{\alpha \in [0,1]}$ and, by \eqref{usethis} applied to each $\partial \tilde  C_\alpha$, we conclude that
  \[
    \Sigma'\cap \tilde C \subset \mbox{spt}(V'').
    \]
    This combined with the maximum principle implies that every $Z\in {\rm Tan}(y,V'')$ coincides with $T_y \Sigma'$.
    A graphicality argument then allows one to conclude the desired smooth gluing at $y$.

    The smooth gluing at points of multiplicity two for $V'$ in $\partial^+  A''$ is similar to the three-dimensional argument, and we omit the details.

\end{proof}
By the previous argument, we deduce the optimal regularity of $V$ in punctured balls. In particular,  $V$ is optimally regular away from finitely many points.
While this concludes the min-max construction for $n>3$, as finitely many points have zero $\mathcal{H}^{n-3}$-measure, for $n=3$, we are still left to remove these finitely many singularities.

Due to the lack of a monotonicity formula we cannot remove the finitely many singularities just using the $F$-stationarity and stability, as these singularities may have infinite density a priori. Hence we cannot extend the stability inequality across these points.
However, a combinatorial lemma of Almgren-Pitts shows that the map of radii $r$ for the existence of replacements in Proposition \ref{prop:replace} satisfies one of the following:
\begin{itemize}
\item[(i)] there exists $R>0$ such that $r(x)\equiv R$;
\item[(ii)] there exists $p\in M$ such that $r(x)=d(x,p)$ for $x\neq p$ and $r(p)=\infty$.
\end{itemize}
So we can cover $M\setminus \{p\}$ with finitely many punctured balls $B(x,r(x))\setminus \{x\}$.
We cannot remove the singularity $p$ by further exploiting the Almgren-Pitts combinatorial lemma, as $p$ accounts for the index of the constructed min-max anisotropic minimal surface.

In order to remove the last singular point $p$ for $n=3$, in \cite{DPDRL} we prove a uniform density ratio upper bound for $\Sigma$. This allows:
\begin{itemize}
\item to extend the stability inequality (hence the $L^2$-curvature estimates) across the singular point $p$,
\item to prove that the blow-up at $p$ is unique and is a plane \cite{W},
\item to remove the singular point $p$.
\end{itemize}
We recall that we have also used a uniform density ratio upper bound for the compactness of stable surfaces with a singular set of zero $\mathcal{H}^{n-3}$-measure. Obtaining this upper density bound is one of the major contributions of \cite{DPDRL}. 

Below, we briefly sketch the strategy to prove a uniform density ratio upper bound:
\begin{itemize}
\item We build an optimal nested volume-parametrized sweep-out $\partial \Omega_t$ as in the work of Chambers and Liokumovich \cite{CL},  and of Chodosh, Liokumovich and Spolaor \cite{CLS}.
\item We fix a sequence of triangulations $\{T_k\}$ of $M$ such that $\operatorname{diam}(\sigma)\approx 2^{-k}$ for $\sigma \in T_k$. We aim to get a new optimal nested sweep-out $\partial \tilde \Omega_t^*$ such that
\begin{equation}\label{uu}
\mathcal H^{n-1}(\partial \tilde \Omega_t^* \cap \sigma)\lesssim (2^{-k})^{n-1}, \qquad  \forall \sigma \in T_k,  \forall t\in [0,1].
\end{equation}
\item By lower semicontinuity, for every $k\in \mathbb N$ and $\sigma \in T_k$, the set of ``bad'' slices of $\partial \Omega_t$ is an at most countable union of disjoint open intervals
\[
    \bigcup^\infty_{i = 1} (a_i, b_i)\,.
\]
\item For every $i\in \mathbb N$ we modify all slices $\partial \Omega_t$ for $t\in (a_i,b_i)$, by solving a suitable anisotropic Plateau-type problem. This provides the desired \eqref{uu} by a simple comparison argument.
\end{itemize}

\vspace{0.5cm}

\small{
Antonio De Rosa,\\
\emph{Department of Decision Sciences and BIDSA, Bocconi University, Milan, Italy,}\\
\emph{Email address:} antonio.derosa@unibocconi.it}
\end{document}